\newtheorem{theorem}{Theorem}[section]
\newtheorem{proposition}[theorem]{Proposition}
\newtheorem{lemma}[theorem]{Lemma}
\newtheorem{corollary}[theorem]{Corollary}
\theoremstyle{remark}
\newtheorem*{claim}{Claim}
\newtheorem*{obs}{Observation}
\newtheorem{example}{Example}
\newcommand{\mc}[0]{\mathcal}
\newcommand{\bb}[0]{\mathbb}
\newcommand{\bs}[0]{\backslash}
\newcommand{\GB}{(G,\mathcal B)}
\newcommand{\Bb}{\mathcal B}
\newcommand{\Uu}{\mathcal U}
\newcommand{\del}{\backslash}
\DeclareMathOperator{\cl}{cl}
\DeclareMathOperator{\loops}{loops}
\newcommand{\Ii}{\mathcal I}
\newcommand{\Ff}{\mathcal F}
\newcommand{\Ll}{\mathcal L}
\newcommand{\Qq}{\mathcal Q}
\newcommand{\Cc}{\mathcal C}
\newcommand{\Tt}{\mathcal T}
\newcommand{\FF}{\mathbb F}
\newcommand{\comout}[1]{}
\newcommand{\br}[1]{\left( #1 \right)}
\newcommand{\iso}{\cong}
\newcommand{\QGone}{\hyperlink{(qg4)}{{\scshape (q1)}}} 
\newcommand{\QGtwo}{\hyperlink{(qg4)}{{\scshape (q2)}}} 
\newcommand{\QGthree}{\hyperlink{(qg4)}{{\scshape (q3)}}} 
\newcommand{\QGfour}{\hyperlink{(qg4)}{{\scshape (q4)}}} 
\newcommand{\BGone}{\hyperlink{BGM(1)}{{\scshape (b1)}}} 
\newcommand{\BGtwo}{\hyperlink{BGM(2)}{{\scshape (b2)}}} 
\newcommand{\BGthree}{\hyperlink{BGM(3)}{{\scshape (b3)}}} 
\begin{document}

\title{Describing Quasi-Graphic Matroids} 

\author[Bowler]{Nathan Bowler$^*$} 
\address{${}^*$Department of Mathematics, University of Hamburg, Germany} 
\email{nathan.bowler@uni-hamburg.de} 

\author[Funk]{Daryl Funk${}^\dagger$}
\address{${}^\dagger$Department of Mathematics, Douglas College, British Columbia, Canada} 
\email{funkd@douglascollege.ca} 

\author[Slilaty]{Daniel Slilaty${}^\ddagger$}
\address{${}^\ddagger$Department of Mathematics and Statistics, Wright State University, Dayton, OH 45435. Work partially supported by a grant from the Simons Foundation \#246380.}
\email{daniel.slilaty@wright.edu}

\date{May 26, 2025} 

\maketitle

{\color{black}
This is a revised version of our original paper, \emph{Describing Quasi-graphic Matroids} \cite{MR4037634}, incorporating the corrections published in \cite{QG_correction}. 
Our main theorem as stated in  \cite{MR4037634} is missing the required assumption that matroids should be connected; \cite{QG_correction} addresses this oversight. 
Those unfamiliar with the original paper will find in this version a complete, correct description of quasi-graphic matroids, sparing them the inconvenience of having to read both the original paper and the corrections presented in \cite{QG_correction}. 

We also present here some new results 
that do not appear in \cite{MR4037634} or \cite{QG_correction}. 
These appear in Section \ref{sec:Biased_graphic_matroids}. 
Of particular interest to readers already familiar with \cite{MR4037634} and \cite{QG_correction} may be the following: 
\begin{itemize}
\item 
Given a matroid $M$ and a graph $G$, of the four axioms for quasi-graphic matroids, three may be checked in time polynomial in $|E(M)|$, but the fourth in general cannot. Jim Geelen has asked for a condition, checkable in polynomial time, that could replace this fourth axiom [personal communication]. 
We provide such an alternative (Theorem \ref{Felix}). 
\item A construction extending the bracelet function and tripartion constructions to direct sums of matroids (Theorem \ref{thm:component-wise-proper_bracelet_function_defn}). 
\end{itemize} 
}


\begin{abstract}
The class of quasi-graphic matroids recently introduced by Geelen, Gerards, and Whittle generalises each of the classes of frame matroids and lifted-graphic matroids introduced earlier by Zaslavsky. 
For each biased graph $\GB$ Zaslavsky defined a unique lift matroid $L\GB$ and a unique frame matroid $F\GB$, each on ground set $E(G)$. 
We show that in general there may be many quasi-graphic matroids on $E(G)$ and describe them all: 
for each graph $G$ and partition $(\Bb,\Ll,\Ff)$ of its cycles such that $\Bb$ satisfies the theta property and each cycle in $\Ll$ meets each cycle in $\Ff$, there is a quasi-graphic matroid $M(G,\Bb,\Ll,\Ff)$ on $E(G)$. 
Moreover, every quasi-graphic matroid arises in this way. 
We provide cryptomorphic descriptions in terms of subgraphs corresponding to circuits, cocircuits, independent sets, and bases. 
Equipped with these descriptions, we prove some results about quasi-graphic matroids. 
In particular, we provide alternate proofs that do not require 3-connectivity of two results of Geelen, Gerards, and Whittle for 3-connected matroids from their introductory paper: namely, 
that every quasi-graphic matroid linearly representable over a field is either lifted-graphic or frame,  
and that if a matroid $M$ has a framework with a loop that is not a loop of $M$ then $M$ is either lifted-graphic or frame. 
We also provide sufficient conditions for a quasi-graphic matroid to have a unique framework. 

Zaslavsky has asked for those matroids whose independent sets are contained in the collection of independent sets of $F\GB$ while containing those of $L\GB$, for some biased graph $\GB$. 
Adding a natural (and necessary) non-degeneracy condition defines a class of matroids, which we call \emph{biased-graphic}. 
We show that the class of biased-graphic matroids almost coincides with the class of quasi-graphic matroids: every quasi-graphic matroid is biased-graphic, and if $M$ is a biased-graphic matroid that is not quasi-graphic then $M$ is a 2-sum of a frame matroid with one or more lifted-graphic matroids. 
\end{abstract}


{
  \hypersetup{hidelinks}
  \tableofcontents
}


\section{Context and motivation} 

In series of foundational papers
\cite{Zaslavsky:BG1, Zaslavsky:BG2, MR1273951, MR1328292, MR2017726}
Thomas Zaslavsky introduced \emph{biased graphs}, their associated \emph{frame} and \emph{lift} matroids, and established their basic properties.
A matroid is a \emph{frame matroid} if it may be extended so that it has a basis $B$ such that every element is spanned by at most two elements of $B$.
Such a basis is a \emph{frame} for the matroid.
A matroid $M$ is a \emph{lift} (or \emph{lifted-graphic}) \emph{matroid} if it is an elementary lift of a graphic matroid; that is, if there is a matroid $N$ with $E(N) = E(M) \cup \{e\}$ such that $N \del e = M$ and $N/e$ is graphic.

These are fundamental and important classes of matroids.
Frame matroids were introduced by Zaslavsky as a significant generalisation of Dowling geometries 
(the cycle matroid of a complete graph is a Dowling geometry over the trivial group). 
Moreover, classes of representable frame matroids play an important role in the matroid minors project of Geelen, Gerards, and Whittle \cite[Theorem
3.1]{GeelenGerardsWhittle:HighlyConnected}, analogous to that of graphs embedded on surfaces in graph structure theory. 

Geelen, Gerards, and Whittle recently introduced the class of {quasi-graphic} matroids as a common generalisation of each of these classes  \cite{JGT:JGT22177}. 
A matroid $M$ is \emph{quasi-graphic} if it has a \emph{framework}: 
that is, a graph $G$ with (i) $E(G) = E(M)$, such that (ii) the rank of the edge set of each component of $G$ is at most the size of its vertex set, 
(iii) for each vertex $v \in V(G)$ the closure in $M$ of $E(G-v)$ does not contains an edge with endpoints $v, w$ with $w \neq v$, 
and (iv) no circuit of $M$ induces a subgraph in $G$ of more than two components. 
For each quasi-graphic matroid $M$ there is a biased graph $\GB$, where $G$ is a framework for $M$ and $\Bb$ is the set of cycles of $G$ that are circuits of $M$. 
However, there is not enough information provided by a biased graph to determine a quasi-graphic matroid. 
We define two notions that provide the missing information: \emph{bracelet functions} and \emph{proper triparitions}. 
A \emph{bracelet} is a vertex disjoint pair of unbalanced cycles in a biased graph. 
A \emph{bracelet function} $\chi$ is a function that maps each bracelet of a biased graph to $\{\mathsf{dependent}, \mathsf{independent}\}$. 
If $\chi$ obeys a certain condition then we say $\chi$ is \emph{proper} and 
we may define a matroid $M(G,\Bb,\chi)$ on $E(G)$, in which the independence of each bracelet is given by $\chi$. 
Given a graph $G$, a \emph{proper tripartition} of the cycles of $G$ is a partition $(\Bb,\Ll,\Ff)$ of its collection of cycles such that $\Bb$ obeys the theta property and every cycle in $\Ll$ meets every cycle in $\Ff$. 
Thus in a proper tripartition every bracelet either has both its cycles in $\Ll$ or both its cycles in $\Ff$. 
Given a graph $G$ together with a proper tripartition $(\Bb,\Ll,\Ff)$ of its cycles, we may define a matroid $M(G,\Bb,\Ll,\Ff)$ on $E(G)$, in which a bracelet is independent precisely when both of its cycles are in $\Ff$. 
We can now state our main result. 

\comout{OLD:
\begin{theorem} \label{thm:equivalences}
Let $M$ be a matroid and let $(G, \Bb)$ be a biased graph with $E(G)=E(M)$. 
The following are equivalent. 
\begin{enumerate} 
\item There is a proper bracelet function $\chi$ for $G$ such that $M=M(G,\Bb,\chi)$. 
\item There is a proper tripartition $(\Bb,\Ll,\Ff)$ of the cycles of $G$ such that $M=M(G,\Bb,\Ll,\Ff)$. 
\item $M$ is quasi-graphic with framework $G$ and $\Bb$ is the set of cycles of $G$ that are circuits of $M$. 
\end{enumerate} 
\end{theorem}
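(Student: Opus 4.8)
I would prove the three statements equivalent around the cycle $(3)\Rightarrow(1)\Rightarrow(2)\Rightarrow(3)$, taking as given the (separately established) facts that, for proper data, $M(G,\Bb,\chi)$ and $M(G,\Bb,\Ll,\Ff)$ are matroids, together with the explicit descriptions of their independent sets and circuits. The implications between $(1)$ and $(2)$ I would dispatch first, since they amount to a dictionary on the fixed biased graph $\GB$: given a proper bracelet function $\chi$, let $\Ff$ consist of those unbalanced cycles $C$ for which every bracelet containing $C$ is $\chi$-independent, let $\Ll$ be the remaining unbalanced cycles, and keep $\Bb$ as the balanced cycles; conversely, a proper tripartition $(\Bb,\Ll,\Ff)$ determines the bracelet function declaring $\{C_1,C_2\}$ independent exactly when $C_1,C_2\in\Ff$. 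One checks that these operations are mutually inverse and carry proper objects to proper objects --- the one substantive point being that ``every cycle of $\Ll$ meets every cycle of $\Ff$'' is precisely what properness of $\chi$ imposes on vertex-disjoint unbalanced cycles --- and then $M(G,\Bb,\chi)=M(G,\Bb,\Ll,\Ff)$ for corresponding data, since the two constructions agree on bracelets by design and are built from the same biased graph.

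\textbf{$(2)\Rightarrow(3)$.} Assume $M=M(G,\Bb,\Ll,\Ff)$. I would verify directly that $G$ is a framework for $M$ in the sense of Geelen--Gerards--Whittle: that (i) $r_M(E(H))\le|V(H)|$ for each component $H$ of $G$; (ii) $\cl_M(E(G-v))\subseteq E(G-v)\cup\loops_G(v)$ for each vertex $v$; and (iii) the subgraph of $G$ induced by any circuit of $M$ has at most two components, each containing a cycle. Using the description of the independent sets and circuits of $M(G,\Bb,\Ll,\Ff)$, each of (i)--(iii) reduces to an elementary statement about subgraphs of $G$ and the tripartition. That the cycles of $G$ which are circuits of $M$ are precisely those in $\Bb$ is then immediate: a balanced cycle is a circuit of $M$ by construction, while a single unbalanced cycle on $k$ vertices has $k$ edges and rank $k$, hence is independent.

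\textbf{$(3)\Rightarrow(1)$.} Let $M$ be quasi-graphic with framework $G$ and let $\Bb$ be the set of cycles of $G$ that are circuits of $M$, so that $\GB$ is a biased graph; define $\chi$ by setting $\chi(\{C_1,C_2\})$ to $\mathsf{dependent}$ or $\mathsf{independent}$ according as $E(C_1)\cup E(C_2)$ is dependent or independent in $M$. I would then show $M=M(G,\Bb,\chi)$ by comparing circuits. By the Geelen--Gerards--Whittle classification, the subgraph of $G$ induced by a circuit of a matroid with framework $G$ is a balanced cycle, a theta subgraph, handcuffs (two unbalanced cycles sharing a vertex or joined by a path), or a bracelet; each of the first three types is a circuit of both $M$ and $M(G,\Bb,\chi)$ (for balanced cycles by the definition of $\Bb$; for theta subgraphs and handcuffs because such a subgraph is dependent in any matroid with framework $G$, with minimality following from axioms (i) and (ii)), while a bracelet is a circuit precisely when $\chi$ marks it dependent --- the same condition in $M$ as in $M(G,\Bb,\chi)$, by the definition of $\chi$ together with the observation that the only circuit-inducing subgraph inside a bracelet is the whole bracelet. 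Thus $M$ and $M(G,\Bb,\chi)$ have the same circuits and so coincide, provided $\chi$ is proper.

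\textbf{The obstacle.} It remains to prove that the $\chi$ obtained from $M$ is proper; this is the step I expect to be the real difficulty. One must show that the pattern of dependent and independent bracelets realised inside $M$ obeys the combinatorial constraint in the definition of properness. I would feed carefully chosen configurations --- unions of two bracelets, together with paths joining their cycles and with theta subgraphs spanning individual cycles, and similar gadgets --- into circuit elimination in $M$ (equivalently, into submodularity of $r_M$), and then translate each resulting matroid relation into the required statement about the cycles of $G$. Building these gadgets, and carrying out the case analysis on how two bracelets can share vertices or edges in $G$, is where the effort is concentrated.
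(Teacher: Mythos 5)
Your proposal traces the same cycle of implications as the paper, with the same definitions of $\chi$ from $(\Bb,\Ll,\Ff)$ and vice versa, the same verification that $G$ is a framework for $M(G,\Bb,\Ll,\Ff)$, and the same strategy of comparing $\Cc(M)$ with $\Cc(G,\Bb,\chi)$ via the Geelen--Gerards--Whittle circuit classification (the paper's Lemma~\ref{lem:GGWquasicircuits}) and the dependence of subgraphs with more edges than vertices (Lemma~\ref{lem:GGW2.6}); so the route is essentially the paper's. Two points are worth flagging. First, your ``substantive point'' in $(1)\Rightarrow(2)$, that properness of $\chi$ forces every $\Ll$-cycle to meet every $\Ff$-cycle, silently uses the fact that a proper $\chi$ assigns the same value to every bracelet containing a fixed unbalanced cycle; this is not immediate from the definition of a proper bracelet function but needs Tutte's unbalanced-cycle path theorem (the paper isolates it as Lemma~\ref{lem:everycyclecontainingChassamechivalue}). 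Second, the ``obstacle'' you identify at the end is not a gap in the argument but a missed opportunity to invoke a prerequisite you already granted yourself: once you have shown $\Cc(M)=\Cc(G,\Bb,\chi)$, the set $\Cc(G,\Bb,\chi)$ is automatically the circuit set of a matroid, and the converse implication ``if $\Cc(G,\Bb,\chi)$ is a matroid circuit set then $\chi$ is proper'' (the paper's Theorem~\ref{thm:BraceletFunctionPropriety}) belongs to the suite of facts establishing the $M(G,\Bb,\chi)$ construction, proved exactly by the circuit-elimination gadgets you describe. So your instinct about what is required is right; it simply belongs upstream, as a lemma about the construction rather than as part of this theorem's proof.
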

}

{\color{black}
\begin{theorem} \label{fixed_thm:equivalences}
Let $M$ be a connected matroid and let $(G,\Bb)$ be a biased graph with $E(G) = E(M)$. 
The following are equivalent. 
\begin{enumerate}
\item There is a proper bracelet function $\chi$ for $G$ such that $M=M(G,\Bb,\chi)$. 
\item There is a proper tripartition $(\Bb,\Ll,\Ff)$ of the cycles of $G$ such that $M=M(G,\Bb,\Ll,\Ff)$. 
\item $M$ is quasi-graphic with framework $G$ and $\Bb$ is the set of cycles of $G$ that are circuits of $M$. 
\end{enumerate} 
\end{theorem}

{\color{black}More precisely, the statement we in fact prove is the following.} 

\begin{theorem} \label{fixed_2_thm:equivalences}
Let $M$ be a matroid and let $(G,\Bb)$ be a biased graph with $E(G) = E(M)$. 
If either $M$ is connected or $G$ is connected, then the following are equivalent. 
\begin{enumerate}
\item There is a proper bracelet function $\chi$ for $G$ such that $M=M(G,\Bb,\chi)$. 
\item There is a proper tripartition $(\Bb,\Ll,\Ff)$ of the cycles of $G$ such that $M=M(G,\Bb,\Ll,\Ff)$. 
\item $M$ is quasi-graphic with framework $G$ and $\Bb$ is the set of cycles of $G$ that are circuits of $M$. 
\end{enumerate} 
\end{theorem}
}

\subsection{Frame matroids}
In \cite{MR1273951} Zaslavsky showed that the class of frame matroids is precisely that of matroids arising from \emph{biased graphs}, as follows.
Let $M$ be a frame matroid on ground set $E$, with frame $B$.
By adding elements in parallel if necessary, we may assume $B \cap E = \emptyset$.
Hence for some matroid $N$, $M = N \del B$ where $B$ is a basis for $N$ and every element $e \in E$ is minimally spanned by either a single element or a pair of elements in $B$.
Let $G$ be the graph with vertex set $B$ and edge set $E$, in which $e$ is a loop with endpoint $f$ if $e$ is parallel with $f \in B$, and otherwise $e$ is an edge with endpoints $f, f' \in B$ if $e \in \cl\{f,f'\}$.
The edge set of a cycle of $G$ is either independent or a circuit in $M$.
A cycle $C$ in $G$ whose edge set is a circuit of $M$ is said to be \emph{balanced}; otherwise $C$ is \emph{unbalanced}.
Thus the cycles of $G$ are partitioned into two sets: those that are circuits and those that are independent.
The collection of balanced cycles of $G$ is denoted $\Bb$.
The \emph{bias} of a cycle is given by the set of the bipartition to which it belongs. 
Together the pair $(G,\Bb)$ is a \emph{biased graph}.
We say such a biased graph $\GB$ \emph{represents} the frame matroid $M$, and we write $M = F\GB$.

The circuits of a frame matroid $M$ may be precisely described in terms of biased subgraphs of such a biased graph $\GB$.
A \emph{theta graph} consists of a pair of distinct vertices with three internally disjoint paths between them.
The circuits of $M$ are precisely those sets of edges inducing one of:
a balanced cycle,
a theta subgraph in which all three cycles are unbalanced,
two edge-disjoint unbalanced cycles intersecting in exactly one vertex, or
two vertex-disjoint unbalanced cycles along with a minimal path connecting them.
The later two biased subgraphs are called \emph{handcuffs}, \emph{tight} or \emph{loose}, respectively.
It is a straightforward consequence of the circuit elimination axiom that a biased theta subgraph of $\GB$ may not contain exactly two balanced cycles.
We call this the \emph{theta property}.

Zaslavsky further showed \cite{MR1273951} that conversely, given any graph $G$ and partition $(\Bb,\Uu)$ of its cycles, all that is required for there to exist a frame matroid
whose circuits are given by the collection of biased subgraphs described above is that the collection $\Bb$ satisfy the theta property.

\subsection{Lifted-graphic matroids}
Let $N$ be a matroid on ground set $E \cup \{e\}$, and suppose $G$ is a graph with edge set $E$ and with cycle matroid $M(G)$ equal to $N/e$.
Then $M = N \del e$ is a lifted-graphic matroid.
Each cycle in $G$ is either a circuit of $N$, and so of $M$, or together with $e$ forms a circuit of $N$. 
Thus again the cycles of $G$ are naturally partitioned into two sets: those that are circuits of $M$ and those that are independent in $M$; thus a lifted-graphic matroid naturally gives rise to a biased graph.
Again, a cycle whose edge set is a circuit of $M$ is said to be balanced, and those whose edges form an independent set are unbalanced.
In \cite{Zaslavsky:BG2} Zaslavsky showed that the circuits of $M$ are precisely those sets of edges in $G$ inducing one of: a balanced cycle, a theta subgraph in which all three cycles are unbalanced, two edge disjoint unbalanced cycles meeting in exactly one vertex, or a pair of vertex-disjoint unbalanced cycles.
The later two biased subgraphs are called tight handcuffs and \emph{bracelets}, respectively.
Letting $\Bb$ denote the collection of balanced cycles of $G$, we again say the biased graph $\GB$ so obtained \emph{represents} the lifted-graphic matroid $M$ and write $M = L\GB$.
Just as with frame matroids, Zaslavsky showed that given any graph $G$ and partition $(\Bb,\Uu)$ of its cycles with $\Bb$ obeying the theta property, there is a lifted-graphic matroid $M=L\GB$ whose circuits are precisely those biased subgraphs described above, and that all lifted-graphic matroids arise from biased graphs in this way.

\subsection{Quasi-graphic matroids} \label{sec:quasigraphicintro}
In \cite{JGT:JGT22177}, Geelen, Gerards, and Whittle define the class of {quasi-graphic matroids}, as follows. 
For a vertex $v$, denote by $\loops(v)$ the set of loops incident to $v$.
Given a matroid $M$, a \emph{framework} for $M$ is a graph $G$ satisfying
\begin{enumerate}
\item[{\scshape (q1)}] \hypertarget{(qg1)} $E(G) = E(M)$,
\item[{\scshape (q2)}] \hypertarget{qg2} for each component $H$ of $G$, $r(E(H)) \leq |V(H)|$,
\item[{\scshape (q3)}] \hypertarget{(qg3)}for each vertex $v \in V(G)$, $\cl(E(G-v)) \subseteq E(G-v) \cup \loops(v)$, and
\item[{\scshape (q4)}] \hypertarget{(qg4)} if $C$ is a circuit of $M$, then the graph induced by $E(C)$ has at most two components.
\end{enumerate}
A matroid is \emph{quasi-graphic} if it has a framework. 
It is conjectured that, in contrast to the classes of lifted-graphic and frame matroids, the class of quasi-graphic matroids enjoys some nice properties. 
Chen and Geelen \cite{2017arXiv170304857C} recently showed that each of the classes of frame and lifted-graphic matroids have infinitely many excluded minors.
They conjecture that the class of quasi-graphic matroids has only finitely many excluded minors. 
And while Chen and Whittle \cite{ChenWhittle:HardToRecognize} have shown that there is no polynomial-time algorithm that can recognise, via a rank-oracle, whether a given matroid is a frame matroid or a lifted-graphic matroid, 
Geelen, Gerards, and Whittle conjecture \cite{JGT:JGT22177} that there is a such a polynomial-time algorithm for deciding whether or not a given 3-connected matroid is quasi-graphic. 

Let $M$ be a quasi-graphic matroid, and let $G$ be a framework for $M$. 
Every forest of $G$ is independent in $M$ \cite[Lemma 2.5]{JGT:JGT22177}, so every cycle of $G$ is either minimally dependent or independent. 
As before, let $(\Bb,\Uu)$ be the partition of the cycles of $G$ into two sets according to whether each cycle is a circuit ($\Bb$) or independent ($\Uu$) in $M$, and call those cycles in $\Bb$ \emph{balanced}.
Geelen, Gerards, and Whittle show that the collection $\Bb$ satisfies the theta property; 
the following lemma is an immediate consequnce of \cite[Lemma 3.3]{JGT:JGT22177} and axiom \QGfour. 

\begin{lemma} \label{lem:GGWquasicircuits}
Let $G$ be a framework for a matroid $M$. 
If $C$ is a circuit of $M$, then $C$ induces in $G$ one of: 
a balanced cycle, 
a theta with no cycle balanced, 
a tight handcuff, 
a loose handcuff, or 
a bracelet. 
\end{lemma}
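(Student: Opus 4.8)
The plan is to study the subgraph $H := G[E(C)]$ of $G$ induced by the edges of the circuit $C$, show that it has small cyclomatic number and no low-degree vertices, and then appeal to the short classification of connected graphs with cyclomatic number at most $2$ and minimum degree at least $2$. I use throughout that, $C$ being a circuit, every proper subset of $E(C)$ is independent in $M$ and $r(E(C)) = |E(C)| - 1$, and that every cycle of the framework $G$ is either a circuit of $M$ (hence balanced) or independent in $M$ (hence unbalanced).

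\emph{Step 1 (no leaves).} I first show $H$ has minimum degree at least $2$, counting a loop as contributing $2$. If some vertex $v$ of $H$ had degree $1$, incident with the single non-loop edge $e$, then $e$ would be the only edge of $C$ meeting $v$, so $C \setminus \{e\} \subseteq E(G - v)$; axiom (3) gives $e \notin \cl(E(G-v))$, hence $e \notin \cl(C \setminus \{e\})$, contradicting that $C$ is a circuit. So every component of $H$ has at least as many edges as vertices, and in particular contains a cycle.

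\emph{Step 2 (the rank bound, the crux).} The technical heart is the inequality $r(E(H')) \le |V(H')|$ for every connected subgraph $H'$ of $G$; this is the step I expect to be the real obstacle. Axiom (2) gives it when $H'$ is all of a connected $G$, and one passes to an arbitrary connected $H'$ by induction, repeatedly removing a vertex outside $H'$ and using the closure condition (3) to keep the rank under control. Granting this, let $H_1,\dots,H_t$ be the components of $H$; by axiom (4), $t \le 2$. Subadditivity of rank and the bound give
\[
|E(C)| - 1 = r(E(C)) \le \sum_{i=1}^t r(E(H_i)) \le \sum_{i=1}^t |V(H_i)|,
\]
so $\sum_i\big(|E(H_i)| - |V(H_i)|\big) \le 1$, while by Step 1 each summand is non-negative. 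Hence either every component of $H$ has $|E(H_i)| = |V(H_i)|$, or exactly one has $|E(H_i)| = |V(H_i)| + 1$ and the others have $|E(H_i)| = |V(H_i)|$.

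\emph{Step 3 (topology and balance).} A connected graph of minimum degree at least $2$ with $|E| = |V|$ is a cycle, and a routine case analysis shows that one with $|E| = |V| + 1$ is a theta graph, a tight handcuff, or a loose handcuff. Thus if $t = 1$ then $H$ is a cycle, a theta, a tight handcuff, or a loose handcuff; if $t = 2$ with both components satisfying $|E(H_i)| = |V(H_i)|$ then $H$ is a pair of vertex-disjoint cycles; and the case $t = 2$ with one component $H_1$ having $|E(H_1)| = |V(H_1)| + 1$ is impossible, for then the rank bound makes $E(H_1)$ a dependent proper subset of $E(C)$, contradicting minimality of $C$. Finally I read off the balance. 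If $H$ is a single cycle $D$ then $E(D) = E(C)$ is dependent, so $D$ is not independent in $M$ and is therefore balanced, giving a balanced cycle. In every other case a balanced cycle contained in $H$ would be a circuit of $M$ properly inside $C$, again contradicting minimality; so the three cycles of the theta, the two cycles of each handcuff, and the two disjoint cycles are all unbalanced, and $H$ is respectively a theta with no balanced cycle, a tight handcuff, a loose handcuff, or a bracelet. This exhausts the possibilities and proves the lemma.
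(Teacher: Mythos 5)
The paper does not prove this lemma: it is quoted directly from Geelen, Gerards, and Whittle \cite[Lemma 3.3]{JGT:JGT22177} and used as a black box, so there is no in-paper proof to compare against. Your proposal is therefore new material relative to the paper, and its overall shape is sound: Step 1 (no degree-one vertices, via framework axiom (3)) and Step 3 (the classification of connected graphs of small cyclomatic number and minimum degree two, plus the balance bookkeeping) are correct.

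The genuine gap is the one you flag yourself in Step 2. The inequality $r(E(H')) \le |V(H')|$ for an arbitrary connected subgraph $H'$ (this is Lemma 2.5 of \cite{JGT:JGT22177}) is asserted but not proven. Framework axiom (2) only controls whole components of $G$, and the suggested induction --- ``repeatedly removing a vertex outside $H'$'' --- is not as routine as it sounds: once a vertex is deleted the surviving graph may disconnect, $r(E(K-v)) \le r(E(K)) - 1$ does not by itself bound the rank of an individual component of $K-v$, and one must also handle vertices whose only remaining incidences are loops, for which axiom (3) gives nothing. You could carry this out carefully, but as written it is an appeal to a nontrivial unproved claim. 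A shorter route within the resources this paper actually sets up is to invoke Lemma~\ref{lem:GGW2.6} (GGW's Lemma~2.6): any subgraph with more edges than vertices has dependent edge set. Combined with your Step~1 (each component $H_i$ of $H=G[E(C)]$ has $|E(H_i)|\ge|V(H_i)|$) this immediately gives $|E(H)|\le|V(H)|+1$, since removing any one edge must leave an independent set; it also forces $|E(H_i)|=|V(H_i)|$ for every component when there are two or more, since a component with $|E(H_i)|>|V(H_i)|$ would be a dependent proper subset of $C$. Axiom (4) caps the number of components at two, and your Step~3 then goes through unchanged. Either repair is fine, but until one is supplied the proof is incomplete.
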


Thus in a biased graph the edge set of a balanced cycle, a theta containing no balanced cycle, and tight handcuffs are circuits in each of a lifted-graphic, frame, and quasi-graphic matroid. 
The circuit-subgraphs of these matroids differ only in bracelets: in a lifted-graphic matroid all bracelets are dependent, in a frame matroid all bracelets are independent, while in general a quasi-graphic matroid has both dependent and independent bracelets.  
It follows that if $M$ is a quasi-graphic matroid with framework $G$, then setting $\Bb = \{ C : C$ is a cycle of $G$ and a circuit of $M\}$ yields a biased graph $\GB$ for which 
\[
\Ii(L\GB) \subseteq \Ii(M) \subseteq \Ii(F\GB).
\]

\subsection{Intermediate matroids} 
\label{sec:Intermediate_matroids}

In \cite{Zaslavsky:BG2} Zaslavsky asks for those matroids $M$ satisfying $\Ii\br{L\GB} \subseteq \Ii\br{M} \subseteq \Ii\br{F\GB}$ for some biased graph $\GB$, calling such a matroid $M$ \emph{intermediate} between $L\GB$ and $F\GB$.
Zaslavsky asks the following \cite[Problem 4.3]{Zaslavsky:BG2}.  
\begin{enumerate}
\item Given a biased graph $\GB$, what matroids $M$ on $E(G)$ may exist that are intermediate between $L\GB$ and $F\GB$?
\item Is there a systematic way to construct intermediate matroids?
\end{enumerate}
For a subset $S \subseteq E(G)$, denote by $G[S]$ the subgraph of $G$ induced by $S$ and by $\Bb_S$ the collection $\{ C \in \Bb : E(C) \subseteq S\}$.
Zaslavsky suggests that a ``systematic'' construction ought to be a mapping $\mathbf{M}$ from the set of biased graphs, or from some subset, to the set of matroids such that $\mathbf{M}\GB$ is a matroid on $E(G)$ and  for each $S \subseteq E(G)$, $\mathbf{M}(G[S], \Bb_S)$ is defined and equal to the restriction $\mathbf{M}\GB | S$ of $\mathbf{M}\GB$ to $S$.
The mappings $\mathbf{F}$ and $\mathbf{L}$ assigning to each biased graph $\GB$ its associated frame and lifted-graphic matroid, respectively, are such maps.
Zaslavsky asks for an \emph{intermediate-matroid construction}: that is, a map $\mathbf{M}$ respecting restriction such that 
\[
\Ii(L\GB) \subseteq \Ii(\mathbf{M}\GB) \subseteq \Ii(F\GB)
\]
for all biased graphs $\GB$.
Zaslavsky proves that the only intermediate-matroid constructions with domain all biased graphs are the mappings $\mathbf{F}$ and $\mathbf{L}$ \cite[Theorem 4.5]{Zaslavsky:BG2} and speculates that there exist intermediate constructions other than $\mathbf{F}$ and $\mathbf{L}$ with domain all biased graphs having no unbalanced loops \cite[page 66]{Zaslavsky:BG2}. 

It turns out that what is needed to answer Zaslavsky's questions (1) and (2) is a refinement of the notion of a biased graph, via a refinement of the partition $(\Bb,\Uu)$ of its cycles. 
If we wish to consider those matroids that are intermediate for some biased graph as a meaningful class of matroids, then it is also necessary to impose some kind of restriction on the relationships permitted between a graph and a matroid defined on its edge set. 
The following example may easily be generalised to show that all matroids are intermediate for some biased graph. 

\begin{example} \label{ex:bad_all_loops_example} 
Let $G$ be the biased graph consisting of $n$ vertices each incident to a single unbalanced loop. 
Then $L\GB \iso U_{1,n}$ and $F\GB \iso U_{n,n}$. 
Every loopless matroid on $n$ elements is intermediate between $L\GB$ and $F\GB$.
\end{example}

What are we to make of the problem illustrated by Example \ref{ex:bad_all_loops_example}, and how should we go about addressing it? 
The problem appears to be that components of the matroid do not correspond in any way with the components of the graph. 
Let us consider how matroid components and graph components align in the well-known classes of graphic, lifted-graphic, and frame matroids. 
Any condition we impose on our graphs and intermediate matroids defined on their edge sets should certainly be one respected by graphs and their matroids in these classes. 

At one extreme, a tree represents a graphic matroid in which each element is a component. 
At the other extreme we have the lifted-graphic matroid of Example \ref{ex:bad_all_loops_example}: a matroid consisting of a single parallel class of $n$ elements is represented by a graph with $n$ components. 
However, $U_{1,n}$ is also represented as a lifted-graphic matroid by the graph consisting of a single vertex with $n$ incident unbalanced loops. 
This turns out to be key. 

In none of the classes of graphic, lifted-graphic, nor frame matroids do the components of a matroid on ground set $E(G)$ necessarily correspond to components of the graph $G$. 
Each class contains matroids with many components defined on the ground set of a connected graph. 
Conversely, as we have seen, in the class of lifted-graphic matroids a connected matroid may be represented by a graph with many components. 
For cycle matroids of graphs, Whitney's 2-isomorphism theorem characterises the situation; a connected matroid must be represented by a connected graph. 
In the class of lifted-graphic matroids, though there are connected matroids represented by a disconnected graph, in this case we may always find a connected graph representing the matroid (Theorem \ref{thm:liftedgraphic_has_connected_graph}(2) below). 
{Such a connected graph is obtained by the Whitney-operation of identifying pairs of vertices, one from each of two distinct components of the graph.} 
In the class of frame matroids, as is the case with cycle matroids of graphs, a connected matroid cannot be represented by a disconnected graph. 
We thus lose nothing from these classes by demanding that, 
{when defining a matroid $M$ on the set of edges of a graph $G$, there always be a graph $H$ Whitney equivalent to $G$ for which no component of $M$ contains edges from distinct components of $H$}. 

We therefore answer Zaslavsky's questions (1) and (2) subject to the condition that every component of a matroid on $E(G)$ be contained in a 
{connected} component of 
{some graph Whitney equivalent to} 
$G$. 
This condition turns out to yield a rich and interesting class.  
Indeed, we will see that the resulting class almost coincides with the class of quasi-graphic matroids; quasi-graphic matroids are 
the only 
{3-connected} 
matroids intermediate for biased graphs subject to this condition. 

Now let us return to the notion of a refinement of the partition $(\Bb,\Uu)$ of the set of cycles defining a biased graph. 
Let $G$ be a graph and let $(\Bb,\Ll,\Ff)$ be a partition of the cycles of $G$ into three sets.
Call such a tripartition $(\Bb,\Ll,\Ff)$ \emph{proper} if $\Bb$ obeys the theta property and every cycle in $\Ll$ meets every cycle in $\Ff$.
Let $\Tt$ be the set of all pairs $(G,(\Bb,\Ll,\Ff))$ consisting of a graph together with a proper tripartition of its cycles.
We show that there is a map $\mathbf{T}$ from $\Tt$ to the set of matroids such that $\mathbf{T}$ respects restrictions, and
for all pairs $(G,(\Bb,\Ll,\Ff))$ in $\Tt$ 
\[ \Ii\br{L\GB} \subseteq \Ii\br{\mathbf{T}(G,(\Bb,\Ll,\Ff))} \subseteq \Ii\br{F\GB}.\]

To define our map $\mathbf{T}$ requires the development of a few tools. 
Foremost, we need a better understanding of 
frameworks for quasi-graphic matroids. 
We will return to the topic of Zaslavsky's intermediate matroids and provide our answers to 
{Zaslavsky's} 
questions (1) and (2) in Section \ref{sec:Biased_graphic_matroids}. 

\section{Bracelet functions and tripartitions of cycles} 

In this section we give two explicit descriptions of quasi-graphic matroids in terms of the subgraphs induced by their circuits in a framework graph and prove Theorem \ref{fixed_2_thm:equivalences}. 
Efforts to understand which bracelets may be dependent and which may be independent lead to the notions of bracelet functions and proper tripartitions of cycles. 
We now explain these notions. 

\subsection{Bracelet functions} 

Let $(G,\Bb)$ be a biased graph.
We construct an auxiliary graph to capture the relationships between bracelets in $G$.
The \emph{cyclomatic number} $\beta(X)$ of a subset $X \subseteq E(G)$ is the minimum number of edges that must be removed from the induced subgraph $G[X]$ in order to obtain an acyclic subgraph.
Let $B$ and $B'$ be distinct bracelets of $G$.
Then $\beta(B \cup B') \geq 3$.
Moreover, since $B \cup B'$ is a union of cycles, $B \cup B'$ has no bridge.

\begin{obs} 
Every graph with no bridge and cyclomatic number three is a subdivision of one of the graphs in Figure \ref{F:CyclomaticNumber3} or of a graph obtained by contracting some edges of one of these graphs.
\end{obs}

\begin{proof}
This observation follows easily from the following results of Whitney  \cite{MR1501641}. 
Let $G$ be a bridgeless graph with at least two edges. Then: 
\begin{itemize} 
\item $G$ uniquely decomposes into its blocks $G_1, \ldots, G_k$ (these are the maximal 2-connected subgraphs of $G$, along with loops); 
\item the cyclomatic number of $G$ is the sum of the cyclomatic numbers of $G_1$, \ldots, $G_k$; 
\item $G$ is 2-connected if and only if $G$ has a proper ear decomposition (that is, an ear decomposition starting with a cycle in which each ear aside from the initial cycle has distinct endpoints); 
\item $G$ has cyclomatic number 1 if and only if $G$ is a cycle; 
\item the cyclomatic number of $G$ is equal to the number of ears in a ear decomposition of $G$. 
\end{itemize} 
Consider the blocks $G_1$, \ldots, $G_k$ of $G$. 
By Whitney's results above, each of these subgraphs of cyclomatic number 1 is a cycle, and each of cyclomatic number 2 is a theta. 
Thus it is straightforward to check that if $G$ has cyclomatic number 3 and $k \geq 2$, then $G$ is either a subdivision of one of the two graphs at right in Figure \ref{F:CyclomaticNumber3}, or a subdivision of a graph obtained from one of the graphs in Figure \ref{F:CyclomaticNumber3} by contracting some of its edges. 
If $G$ has cyclomatic number 3 and $k=1$, then a proper ear decomposition of $G$ has exactly three ears. 
Either the third ear in the decomposition meets just one of the previous ears in the decomposition, or the third ear meets both previous ears in the decomposition. 
In the first case $G$ is a subdivision of the graph second from left in Figure \ref{F:CyclomaticNumber3} or a subdivision of a graph obtained by contracting some of its edges. 
In the second case $G$ is a subdivision of $K_4$ or a subdivision of a graph obtained by contracting some of its edges. 
\end{proof}

Let $B$ and $B'$ be distinct bracelets of $G$.
Neither of the two graphs at left in Figure \ref{F:CyclomaticNumber3}, nor any contraction of either of these graphs, contains two distinct bracelet pairs. 
Hence if $\beta(B \cup B') = 3$ then $B \cup B'$ is a subdivision of one of the two graphs at right in Figure \ref{F:CyclomaticNumber3}{\color{black}, or a contraction of one of these graphs}.
\begin{figure}
\begin{center}
\includegraphics[page=1,scale=.8]{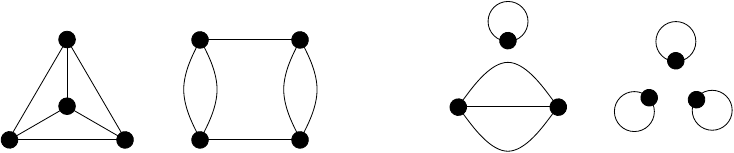}
\end{center}
\caption{Every bridgeless graph of cyclomatic number three is a subdivision of a contraction
of one of the graphs in this figure.}
\label{F:CyclomaticNumber3}
\end{figure}
The \emph{bracelet graph} $\mathscr B\GB$ of $(G,\Bb)$ is the graph with vertex set the collection of bracelets of $(G,\Bb)$ in which $BB'$ is an edge if and only if the cyclomatic number of $B \cup B'$ is 3.

A \emph{bracelet function} is a function from the set of bracelets of a biased graph $(G,\mc B)$ to the set $\{\mathsf{independent},\mathsf{dependent}\}$.
If $\chi$ is a bracelet function with the property that $\chi(B_1)=\chi(B_2)$ whenever $B_1$ and $B_2$ are in the same component of $\mathscr B\GB$, then $\chi$ is a \emph{proper} bracelet function.
Given a biased graph $\GB$ with bracelet function $\chi$, let $\mc C(G,\mc B,\chi)$ be the collection of edge sets of: balanced cycles, thetas with no cycle in $\Bb$, tight handcuffs, bracelets $B$ with
$\chi(B)=\mathsf{dependent}$, and loose handcuffs containing bracelets $B$ with $\chi(B)=\mathsf{independent}$. 
{\color{black}
The following two results say that whenever a collection $\Cc(G,\Bb,\chi)$ is the collection of circuits of a matroid, where either $G$ or the matroid is connected, then $\chi$ must be proper. 
The final result of this subsection says that, conversely, proper bracelet functions define matroids. 
}

\begin{theorem} \label{thm:BraceletFunctionPropriety}
Let $\GB$ be a biased graph with $G$ connected, 
and let $\chi$ be a bracelet function for $\GB$. 
If $\Cc(G,\Bb,\chi)$ is the set of circuits of a matroid, then $\chi$ is proper.
\end{theorem}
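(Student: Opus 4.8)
The plan is to prove the contrapositive: if $\chi$ is not proper, then $\Cc(G,\Bb,\chi)$ fails to be the set of circuits of a matroid. Since $\chi$ is not proper, there exist bracelets $B_1$ and $B_2$ lying in the same component of $\mathscr B\GB$ with $\chi(B_1) \neq \chi(B_2)$. Along a path in $\mathscr B\GB$ from $B_1$ to $B_2$ there must be two \emph{adjacent} bracelets receiving different $\chi$-values, so without loss of generality I may assume $B_1 B_2$ is an edge of $\mathscr B\GB$, i.e.\ $\beta(B_1\cup B_2)=3$, with $\chi(B_1)=\mathsf{dependent}$ and $\chi(B_2)=\mathsf{independent}$. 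The strategy is then to exhibit a failure of the circuit elimination axiom (or, equivalently, to exhibit two members of $\Cc(G,\Bb,\chi)$ one of which properly contains the other, or a pair violating elimination) inside the subgraph $B_1\cup B_2$.

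First I would invoke the structural classification already established in the excerpt: since $B_1\cup B_2$ is bridgeless with cyclomatic number $3$ and contains two distinct bracelet pairs, it is a subdivision of one of the two graphs at the right of Figure~\ref{F:CyclomaticNumber3}. I would then go through these configurations explicitly. In each configuration $B_1 = C_1\cup C_1'$ and $B_2 = C_2\cup C_2'$ are vertex-disjoint pairs of unbalanced cycles, and the edge sets $E(B_1)$ and $E(B_2)$ overlap in a controlled way dictated by the figure. The key point to extract from the figure is that in each such configuration there is a single edge $e$ (or a single subdivided path) whose removal links the two bracelets: $E(B_1)\sd E(B_2)$, or an appropriate symmetric-difference-type combination, is again the edge set of a member of $\Cc(G,\Bb,\chi)$ — typically a bracelet, a loose handcuff, or a theta with no balanced cycle. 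Because $\chi(B_1)=\mathsf{dependent}$, the set $E(B_1)$ is a circuit; because $\chi(B_2)=\mathsf{independent}$, the set $E(B_2)$ is \emph{not} a circuit but a loose handcuff built around $B_2$ is. Feeding these into the circuit elimination axiom applied to $E(B_1)$ and the relevant neighbouring circuit, with the elimination performed over a common edge, forces the existence of a circuit contained in $(E(B_1)\cup E(B_2))\setminus\{e\}$; I would check in each configuration of Figure~\ref{F:CyclomaticNumber3} that no member of $\Cc(G,\Bb,\chi)$ fits inside that set, yielding the contradiction. (Here connectedness of $G$ is used only to ensure the relevant linking paths in $\mathscr B\GB$-adjacent bracelets actually sit together inside a single component, so that the configurations of the figure genuinely occur.)

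The main obstacle I anticipate is the bookkeeping in the case analysis: there are two base graphs in Figure~\ref{F:CyclomaticNumber3}, each of which can be subdivided and can have various edges contracted, and for each resulting configuration one must correctly identify which of the five types of subgraph (balanced cycle, unbalanced theta, tight handcuff, loose handcuff, bracelet) the relevant symmetric differences induce, keeping track of which cycles are forced to be unbalanced. The subtle point is that a loose handcuff around $B_2$ may share its connecting path with structure coming from $B_1$, so one must argue that the unbalanced cycles involved remain genuinely unbalanced (using that $\Bb$ obeys the theta property, so a theta inside $B_1\cup B_2$ cannot acquire exactly two balanced cycles). Once the correct ``neighbouring'' circuit is identified in each configuration, the contradiction with circuit elimination is immediate; assembling these cases is the bulk of the work. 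A clean way to organise it is to treat $B_1\cup B_2$ up to homeomorphism, reducing to finitely many topological types and checking each.
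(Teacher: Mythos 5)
Your overall plan---pass to the contrapositive, reduce to a pair of \emph{adjacent} bracelets $B_1,B_2$ in $\mathscr B\GB$ with opposite $\chi$-values, and derive a contradiction from circuit elimination---is exactly the right one, and is the approach the paper takes. But there is a genuine gap in your commitment to ``exhibit a failure \dots\ inside the subgraph $B_1\cup B_2$.'' Since $\beta(B_1\cup B_2)=3$, the two bracelets share a cycle $C$ that is vertex-disjoint from each of the remaining two cycles; writing $B_1=C_1\cup C$ and $B_2=C_2\cup C$, this means $C$ is an \emph{isolated component} of $G[B_1\cup B_2]$. Consequently no path inside $B_1\cup B_2$ joins $C$ to $C_1$ or to $C_2$, so no loose handcuff around $B_1$ or $B_2$ lies inside $B_1\cup B_2$---contrary to your step ``a loose handcuff built around $B_2$ is [a circuit].'' Concretely, suppose $\chi(B_1)=\mathsf{dependent}$, $\chi(B_2)=\mathsf{independent}$, and $G[C_1\cup C_2]$ is a vertex-disjoint pair of cycles with $\chi(C_1\cup C_2)=\mathsf{independent}$. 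Then $G[B_1\cup B_2]$ is just three disjoint unbalanced cycles, and the only member of $\Cc(G,\Bb,\chi)$ contained in it is the single dependent bracelet $B_1$: there is no second circuit to eliminate against, so the contradiction cannot be produced without leaving $B_1\cup B_2$.

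This is precisely where the hypothesis that $G$ is connected earns its keep, and your parenthetical about its role is not correct. Connectedness is used to reach \emph{outside} $B_1\cup B_2$: one takes a minimal path $P$ in $G$ from $C$ to $C_1$, so that $B_1\cup P$ is a loose handcuff around the independent bracelet $B_1$ and hence lies in $\Cc(G,\Bb,\chi)$; eliminating an edge $e\in C$ between $B_1\cup P$ and the dependent bracelet $B_2$ forces a circuit inside $(B_1\cup P\cup B_2)-e$, which has cyclomatic number two and contains the independent bracelet $C_1\cup C_2$ but no path joining $C_1$ to $C_2$, so is independent---contradiction. (A symmetric extra step handles the sub-case in which a path joining $C_1$ and $C_2$ and avoiding $C$ exists in $G$.) Finally, a practical note on the bookkeeping you worried about: rather than enumerating subdivisions and contractions of the two right-hand graphs of Figure~\ref{F:CyclomaticNumber3}, the case analysis is cleaner if organised around $G[C_1\cup C_2]$, which has cyclomatic number $2$ and is therefore a theta, a tight handcuff, or a bracelet; only the independent-bracelet case requires reaching outside $B_1\cup B_2$.
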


\begin{proof}
Suppose for a contradiction that $\Cc(G,\Bb,\chi)$ is the set of circuits of a matroid and $\chi$ is not proper. 
Then there are bracelets $B_1$ and $B_2$ adjacent in the bracelet graph of $\GB$ with $\chi(B_1)=\mathsf{independent}$ and $\chi(B_2)=\mathsf{dependent}$. 
Since $B_1$ and $B_2$ are adjacent in $\mathscr B\GB$ they share a cycle $C$ that is a component of $G[B_1 \cup B_2]$. 
Let $B_1=C_1 \cup C$ and $B_2=C_2 \cup C$. 
The subgraph $G[C_1 \cup C_2]$ is either a theta subgraph, tight handcuffs, or a bracelet. 
In the case that $G[C_1 \cup C_2]$ is a theta, tight handcuffs, or dependent bracelet, $G[C_1 \cup C_2]$ contains a circuit $Y \in \Cc(G,\Bb,\chi)$. 
If $G[C_1 \cup C_2]$ is an independent bracelet, and there is a path $P$ linking $C_1$ and $C_2$ while avoiding $C$, then $C_1 \cup P \cup C_2 \in \Cc(G,\Bb,\chi)$; put $Y=C_1 \cup P \cup C_2$. 
Let $e \in (Y \cap B_2)-B_1$. 
Since $B_2 \in \Cc(G,\Bb,\chi)$, by the circuit elimination axiom there is a circuit in $\Cc(G,\Bb,\chi)$ contained in $(Y \cup B_2)-e$. 
Since $(Y \cup B_2)-e$ has cyclomatic number two and contains $B_1$ this set is independent, a contradiction. 
So assume $G[C_1 \cup C_2]$ is an independent bracelet, and every path linking $C_1$ and $C_2$ meets $C$. 
Let $P$ be a minimal path linking $C$ and $C_1$. 
Then $B_1 \cup P \in \Cc(G,\Bb,\chi)$ is a circuit. 
Let $e \in C$. 
By the circuit elimination axiom there is a circuit contained in $(B_1 \cup P \cup B_2)-e$. 
But $(B_1 \cup P \cup B_2)-e$ has cyclomatic number two and contains $C_1 \cup C_2$ but no path from $C_1$ to $C_2$, so again this set is independent, a contradiction. 
\end{proof}

{\color{black}
\begin{theorem} \label{new_Thm_2point1}
Let $(G, \Bb)$ be a biased graph, and let $\chi$ be a bracelet function for $(G,\Bb)$. 
If $\Cc(G, \Bb, \chi)$ is the set of circuits of a connected matroid, then $\chi$ is proper. 
\end{theorem}

Theorem \ref{new_Thm_2point1} follows from the following lemma. 

\begin{lemma} \label{bracelets_in_diff_components_are_circuits}
Let $(G, \Bb)$ be a biased graph, let $\chi$ be a bracelet function for $(G,\Bb)$. 
Suppose $\Cc(G, \Bb, \chi)$ is the set of circuits of a connected matroid $M$, but that $G$ 
has two components $G_1$ and $G_2$. 
Let $C_1$ in $G_1$ and $C_2$ in $G_2$ be a pair of unbalanced cycles.  
Then $\chi(C_1 \cup C_2) = \mathsf{dependent}$, so $C_1 \cup C_2$ is a circuit of $M$.
\end{lemma}

\begin{proof}
Suppose to the contrary that $\chi(C_1 \cup C_2) = \mathsf{independent}$. 
Let $e_i$ be an element in cycle $C_i$, $i \in \{1,2\}$. 
As $M$ is connected, there is a circuit of $M$ containing both $e_1$ and $e_2$. 
As there is no path in $G$ linking $e_1$ and $e_2$, this circuit is a bracelet 
$C_1' \cup C_2'$, where, for each $i \in \{1,2\}$, $C_i'$ is an unbalanced cycle in $G_i$ containing $e_i$. 

\begin{claim}
Let $C$, $C'$, $C''$ be unbalanced cycles with $C$ disjoint from $C' \cup C''$ and where $C'$ and $C''$ share at least one 
edge. 
If $\chi(C \cup C') = \mathsf{dependent}$ then also $\chi(C \cup C'') = \mathsf{dependent}$. 
\end{claim}

\begin{proof}[Proof of Claim]
Let $e$ be an edge of $C$, and let $D$ be a circuit of $M/C''$ contained in $C \cup C'$ containing $e$. 
Let $D'$ be a circuit of $M$ with $D \subseteq D' \subseteq D \cup C''$. 
We claim that $D$ is in fact contained in $C$. 
For suppose to the contrary that $D$ contains an edge $f$ of $C'$. 
Let $P$ be the path in $C'$ 
containing $f$ 
whose endpoints are in $C''$ but none of whose internal vertices are in $C''$. 
As $G[D']$ has no vertex of degree one, 
all edges of $P$ are contained in $D'$ and so in $D$. 
Since $C'' \cup P$ is dependent in $M$ while $C''$ is not, 
this implies that $D$ properly includes a circuit of $M/C''$, a contradiction. 
Thus $D$ contains no edge of $C'$, 
so $D'$ is contained in $C \cup C''$. 
\end{proof}

As $\chi(C_1' \cup C_2') = \mathsf{dependent}$, by the claim, 
also $\chi(C_1' \cup C_2) = \mathsf{dependent}$. 
Applying the claim again, we conclude $\chi(C_1 \cup C_2) = \mathsf{dependent}$. 
\end{proof}

\begin{proof}[Proof of Theorem \ref{new_Thm_2point1}]
Suppose for a contradiction that $\Cc(G,\Bb,\chi)$ is the set of circuits of a matroid and $\chi$ is not proper. 
Then there are bracelets $B_1$ and $B_2$ adjacent in the bracelet graph of $(G,\Bb)$ with $\chi(B_1)=\mathsf{independent}$ and $\chi(B_2)=\mathsf{dependent}$. 
Let $B_1=C_1 \cup C$ and $B_2=C_2 \cup C$. 
The subgraph $G[C_1 \cup C_2]$ is either a theta subgraph, tight handcuffs, or a bracelet. 
In the case that $G[C_1 \cup C_2]$ is a theta, tight handcuffs, or dependent bracelet, $G[C_1 \cup C_2]$ contains a circuit $Y \in \Cc(G,\Bb,\chi)$. 
If $G[C_1 \cup C_2]$ is an independent bracelet, and there is a path $P$ linking $C_1$ and $C_2$ while avoiding $C$, then $C_1 \cup P \cup C_2 \in \Cc(G,\Bb,\chi)$; put $Y=C_1 \cup P \cup C_2$. 
Let $e \in (Y \cap B_2)-B_1$. 
Since $B_2 \in \Cc(G,\Bb,\chi)$, by the circuit elimination axiom there is a circuit in $\Cc(G,\Bb,\chi)$ contained in $(Y \cup B_2)-e$. 
Since $(Y \cup B_2)-e$ has cyclomatic number two and contains $B_1$ this set is independent, a contradiction. 

So assume $G[C_1 \cup C_2]$ is an independent bracelet, 
and if there is a path linking $C_1$ and $C_2$, then this path meets $C$. 
Suppose first that such a path exists. 
Let $P$ be a minimal path linking $C$ and $C_1$. 
Then $B_1 \cup P \in \Cc(G,\Bb,\chi)$ is a circuit. 
Let $e \in C$. 
By the circuit elimination axiom there is a circuit contained in $(B_1 \cup P \cup B_2)-e$. 
But $(B_1 \cup P \cup B_2)-e$ has cyclomatic number two and contains $C_1 \cup C_2$ but no path from $C_1$ to $C_2$, so again this set is independent, a contradiction. 
The final possibility is that $C_1 \cup C_2$ forms an independent bracelet and there is no path in $G$ linking $C_1$ and $C_2$. 
But Lemma \ref{bracelets_in_diff_components_are_circuits}  implies that $C_1 \cup C_2$ is a circuit, so this is impossible. 
\end{proof}
}

\begin{theorem} \label{thm:bracelet_defn_of_matroid}
Let $\GB$ be a biased graph, and let $\chi$ be a proper bracelet function for $\GB$. 
Then $\Cc(G,\Bb,\chi)$ is the set of circuits of a matroid.
\end{theorem}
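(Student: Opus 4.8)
The plan is to describe the independent sets of the putative matroid explicitly and verify the independence axioms. Set $\Ii:=\{X\subseteq E(G): X$ contains no member of $\Cc(G,\Bb,\chi)\}$. The first step is the characterisation: $X\in\Ii$ if and only if \textbf{(a)} every component of $G[X]$ has cyclomatic number at most one and any cycle it contains is unbalanced, and \textbf{(b)} no two vertex-disjoint unbalanced cycles contained in $X$ form a dependent bracelet. Here \textbf{(a)} is precisely the condition $X\in\Ii(F\GB)$: by Zaslavsky's description of the circuits of $F\GB$ and the theta property, $X$ contains a circuit of $F\GB$ exactly when $G[X]$ has a balanced cycle or a component of cyclomatic number at least two. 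The crucial observation is that when \textbf{(a)} holds, any two vertex-disjoint unbalanced cycles of $X$ lie in \emph{different} components of $G[X]$, so $X$ can contain no tight handcuff and no loose handcuff; hence the only further obstruction to $X\in\Ii$ is a dependent bracelet, that is, the failure of \textbf{(b)}. In particular $\Ii(L\GB)\subseteq\Ii\subseteq\Ii(F\GB)$.

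The second step records the role of properness. If $C_1,\dots,C_k$ with $k\ge 2$ are pairwise vertex-disjoint unbalanced cycles, then for any $i,j,\ell$ the set $(C_i\cup C_j)\cup(C_i\cup C_\ell)=C_i\cup C_j\cup C_\ell$ has cyclomatic number three, so $C_i\cup C_j$ and $C_i\cup C_\ell$ are adjacent in $\mathscr B\GB$; thus all the bracelets $C_i\cup C_j$ lie in one component of $\mathscr B\GB$, and $\chi$ is constant on them. Hence \textbf{(b)} is equivalent to: whenever $G[X]$ has two or more components carrying an unbalanced cycle, the bracelet formed by any two such cycles is independent. It is now immediate that $\emptyset\in\Ii$ and that $\Ii$ is closed under taking subsets, and one records the size identity $|X|=|V(X)|-c(X)+u(X)$ for $X\in\Ii$, where $c(X)$ is the number of components of $G[X]$ and $u(X)$ the number of those carrying a cycle.

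The remaining, and main, task is independence augmentation: given $X,Y\in\Ii$ with $|X|<|Y|$, produce $e\in Y\setminus X$ with $X+e\in\Ii$. Since $F\GB$ is a matroid we may first choose $e\in Y\setminus X$ with $X+e\in\Ii(F\GB)$. If $X+e\notin\Ii$, then $X+e$ contains a dependent bracelet $B=C\cup C'$ with $e\in C$; so $C'$ is the unbalanced cycle of a cyclic component of $G[X]$ while $C-e$ is a path in a tree component of $G[X]$ that $e$ closes into the unbalanced cycle $C$. The plan is then to re-choose the augmenting element within $Y$, exploiting that all bracelets internal to $Y$ are independent (as $Y\in\Ii$) together with the homogeneity of $\chi$ from the second step, and a counting argument against the size identity, to force the existence of a valid $e$. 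Finally, deleting any single edge from a member of $\Cc(G,\Bb,\chi)$ restores both \textbf{(a)} and \textbf{(b)} --- for a loose handcuff the deletion produces two cyclic components whose bracelet is independent by hypothesis, and the other cases are immediate --- so the members of $\Cc(G,\Bb,\chi)$ are exactly the minimal subsets of $E(G)$ not lying in $\Ii$, hence the circuits of the matroid $(E(G),\Ii)$. I expect the augmentation step to be the main obstacle: one must simultaneously keep the enlarged set inside $\Ii(F\GB)$ and free of dependent bracelets, and it is precisely the properness of $\chi$, via homogeneity, that permits this --- without it the construction genuinely fails, which is the content of Theorem~\ref{thm:BraceletFunctionPropriety}. (Alternatively, one can verify that $r(X):=|V(X)|-b(X)-u(X)+t(X)$ is the rank function of a matroid, where $b(X),u(X)$ are the numbers of balanced and unbalanced components of $G[X]$ and $t(X)$ the number of their classes under ``sharing a dependent bracelet'', well defined by the homogeneity statement.)
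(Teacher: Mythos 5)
Your approach is genuinely different from the paper's: the paper verifies the circuit elimination axiom directly, via a case analysis on the cyclomatic number $\beta(C_1\cup C_2)\in\{2,3,4,\geq 5\}$ (using properness of $\chi$ in the cases $\beta=3,4$), whereas you characterise the independent sets $\Ii$ and aim to verify the independence axioms. Your setup is sound: the characterisation of $\Ii$ via conditions \textbf{(a)} and \textbf{(b)} is correct, the homogeneity observation (that $\chi$ is constant on all bracelets formed from a fixed family of three or more pairwise vertex-disjoint unbalanced cycles) is a valid consequence of properness, the size identity $|X|=|V(X)|-c(X)+u(X)$ holds for $X\in\Ii$, and the verification that every member of $\Cc(G,\Bb,\chi)$ is a minimal non-member of $\Ii$ checks out.

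However, the augmentation step --- which you yourself flag as the main obstacle --- is not proven, and it is precisely where the substance of the theorem lies. Your homogeneity observation cleanly handles the case where $G[X]$ already has at least two unicyclic components: if $X$ has unbalanced cycles $C_1,C_2$ in distinct components and $e\in Y\setminus X$ with $X+e\in\Ii(F\GB)$ closes a new unbalanced cycle $C_e$, then $C_e,C_1,C_2$ are pairwise vertex-disjoint, so $\chi(C_e\cup C_1)=\chi(C_1\cup C_2)=\mathsf{independent}$ and $X+e\in\Ii$. But when $G[X]$ has exactly one unicyclic component with cycle $C_0$, the homogeneity observation is inapplicable (there are only two vertex-disjoint cycles, $C_0$ and $C_e$), and the possibility that \emph{every} frame-valid augmenting element $e\in Y\setminus X$ closes a cycle $C_e$ with $\chi(C_e\cup C_0)=\mathsf{dependent}$ is left open; the ``re-choose within $Y$'' and ``counting argument against the size identity'' are named but not carried out. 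The fallback rank-function approach is also incomplete: well-definedness of $t(X)$ for general $X$ (where unbalanced components may contain several unbalanced cycles) requires that $\chi$ take the same value on all bracelets sharing a fixed cycle, which is essentially Lemma~\ref{lem:everycyclecontainingChassamechivalue} of the paper and goes beyond your homogeneity statement, and the submodularity of the proposed rank function is not verified. As written, the proposal is a promising but incomplete plan; completing either the one-unicyclic-component case of augmentation or the rank axioms would make it a genuine alternative proof.
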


\begin{proof}
It is clear that no element of $\Cc(G,\Bb,\chi)$ is properly contained in another, so we just need to show that the collection satisfies the circuit elimination axiom.
Let $C_1$ and $C_2$ be two elements of $\mc C(G,\mc B,\chi)$ and suppose $e \in C_1 \cap C_2$.
Since $\beta(C_1\cup C_2)$ is strictly greater than each of $\beta(C_1)$ and $\beta(C_2)$, $\beta(C_1\cup C_2) \geq 2$.
We consider the four cases $\beta(C_1 \cup C_2) = 2, 3, 4$, and $\beta(C_1 \cup C_2) \geq 5$.

The case $\beta(C_1 \cup C_2) =2$ is straightforward: 
Since $\beta(C_1\cup C_2)$ is strictly greater than each of $\beta(C_1)$ and $\beta(C_2)$, $\beta(C_1\cup C_2)=2$ if and only if $C_1$ and $C_2$ are both balanced cycles whose union is a theta graph.
Because $\Bb$ satisfies the theta property, the cycle $C_3$ contained in $\br{C_1 \cup C_2}-e$ is balanced, so $C_3 \in \Cc(G,\Bb,\chi)$.

The following two simple observations will be used for the remaining cases.
\begin{itemize}
\item A connected biased subgraph $H$ with $\beta(H) \geq 2$ contains an element of $\mc C(G,\mc B,\chi)$.
\item $C_1 \cup C_2$ has at most three components (else $C_1 \cap C_2 = \emptyset$).
\end{itemize}

Together these observations imply that if $\beta(C_1\cup C_2) \geq 5$ then $\br{C_1 \cup C_2}-e$ has a component $H$ with $\beta(H) \geq 2$, and so contains an element of $\Cc(G,\Bb,\chi)$.

So suppose now $\beta(C_1\cup C_2)=4$.
We may assume that $C_1 \cup C_2$ has three components, since otherwise $\br{C_1 \cup C_2}-e$ has a component with cyclomatic number at least two, which therefore contains an element of $\Cc(G,\Bb,\chi)$.
Let $A_1$, $A_2$, $A_3$ be the components of $C_1 \cup C_2$; without loss of generality assume $\beta(A_1) = \beta(A_2) = 1$ and $\beta(A_3) = 2$.
Since $C_1$ and $C_2$ share the element $e$, this implies both of $C_1$ and $C_2$ are dependent bracelets and that $e \in A_3$. 
Hence $A_1 \cup A_2$ is a bracelet, and $\br{A_1 \cup A_2 \cup C_1}$ consists of three pairwise vertex disjoint unbalanced cycles. 
Thus $A_1 \cup A_2$ and $C_1$ are adjacent in the bracelet graph of $\GB$. 
Since $\chi(C_1)=\mathsf{dependent}$, this implies $\chi(A_1 \cup A_2)=\mathsf{dependent}$. 
That is, $A_1 \cup A_2 \subseteq \br{C_1 \cup C_2}-e$ is an element of $\Cc(G,\Bb,\chi)$. 

Finally suppose $\beta(C_1 \cup C_2) = 3$.
We may assume that $C_1 \cup C_2$ has at least two  components, else $\br{C_1 \cup C_2}-e$ has a component with cyclomatic number at least two, which therefore contains an element of $\Cc(G,\Bb,\chi)$.
Suppose first $C_1\cup C_2$ has three components.
Then each component is a single cycle, and both $C_1$ and $C_2$ are dependent bracelets.
Let $A_1, A_2, A_3$ be the components of $C_1 \cup C_2$, and suppose without loss of generality that $C_1 = A_1 \cup A_3$ and $C_2 = A_2 \cup A_3$.
Then as in the case above, $e \in E(A_3)$ and $A_1 \cup A_2$ is a bracelet.
Since $C_1$, $C_2$, and $A_1 \cup A_2$ are mutually adjacent in the bracelet graph, $\chi(A_1 \cup A_2) =  \mathsf{dependent}$.
Thus $A_1 \cup A_2 \subseteq \br{C_1 \cup C_2}-e$ is a bracelet in $\Cc(G,\Bb,\chi)$.
Now suppose $C_1 \cup C_2$ has just two components, $A_1$ and $A_2$.
Then, without loss of generality, $A_1$ is an unbalanced cycle, $C_1$ is a bracelet, and $A_2$ is either a theta or pair of handcuffs.
If $e \in A_1$ then since $\beta(A_2)=2$ there is an element of $\Cc(G,\Bb,\chi)$ in $\br{C_1 \cup C_2} - e$.
So suppose $e \in A_2$.
Since all of the bracelets contained in $C_1 \cup C_2$ are mutually adjacent in the bracelet graph, all are assigned $\mathsf{dependent}$ by $\chi$.
One of these dependent bracelets is contained in $\br{C_1 \cup C_2}-e$.
\end{proof}

When $\Cc(G,\Bb,\chi)$ is the set of circuits of a matroid, we denote this matroid by $M(G,\Bb,\chi)$. 
If $M=M(G,\Bb,\chi)$ for some biased graph $\GB$ with bracelet function $\chi$, then we say $G$ \emph{is a graph for} $M$. 

\subsection{Proper tripartitions} 

We now consider refinements of the partition $(\Bb,\Uu)$ of the cycles of a biased graph.  
Let $(\Bb,\Ll,\Ff)$ be a partition of the cycles of a graph $G$.
We say that $(\Bb,\Ll,\Ff)$ is a \emph{proper tripartition} if the cycles in $\Bb$ obey the theta property and every cycle in $\Ll$ meets every cycle in $\Ff$.
Given a graph $G$ and a proper tripartition $(\Bb,\Ll,\Ff)$ of its cycles, let $\Cc(G,\Bb,\Ll,\Ff)$ be the collection of subsets of $E(G)$ consisting of cycles in $\Bb$, thetas with no cycle in $\Bb$, tight handcuffs with neither cycle in $\Bb$, bracelets with both cycles in $\Ll$, and loose handcuffs with both cycles in $\Ff$.

\begin{theorem} \label{thm:tripartitions_give_matroids}
Let $G$ be a graph and let $(\Bb,\Ll,\Ff)$ be a proper tripartition of its cycles.
Then $\Cc(G,\Bb,\Ll,\Ff)$ is the set of circuits of a matroid.
\end{theorem}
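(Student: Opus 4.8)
The plan is to deduce this from Theorem \ref{thm:bracelet_defn_of_matroid} by extracting a proper bracelet function from the tripartition. Since $\Bb$ obeys the theta property, $(G,\Bb)$ is a biased graph, and its unbalanced cycles are precisely the cycles lying in $\Ll\cup\Ff$. The observation that makes everything work is that, in any proper tripartition, the two cycles of a bracelet always lie in the \emph{same} part: they are vertex-disjoint, and a cycle in $\Ll$ must meet every cycle in $\Ff$, so they cannot lie one in $\Ll$ and one in $\Ff$. Accordingly, define a bracelet function $\chi$ on $(G,\Bb)$ by declaring $\chi(B)=\mathsf{dependent}$ when both cycles of $B$ lie in $\Ll$ and $\chi(B)=\mathsf{independent}$ when both cycles of $B$ lie in $\Ff$; by the observation this is well defined on every bracelet.

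Granting for the moment that $\chi$ is proper, I would next check that $\Cc(G,\Bb,\chi)=\Cc(G,\Bb,\Ll,\Ff)$, so that Theorem \ref{thm:bracelet_defn_of_matroid} immediately yields the desired matroid. This is a routine comparison of the two definitions: balanced cycles are exactly the cycles in $\Bb$; both cycles of a tight handcuff are unbalanced by definition, so ``tight handcuffs'' and ``tight handcuffs with neither cycle in $\Bb$'' describe the same edge sets; a bracelet $B$ satisfies $\chi(B)=\mathsf{dependent}$ exactly when both of its cycles lie in $\Ll$; and a loose handcuff contains a bracelet $B$ with $\chi(B)=\mathsf{independent}$ exactly when both of its cycles lie in $\Ff$. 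Hence the two collections of circuits coincide.

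It remains to verify that $\chi$ is proper, i.e.\ that $\chi(B_1)=\chi(B_2)$ for any two distinct bracelets $B_1,B_2$ adjacent in $\mathscr B\GB$, equivalently with $\beta(B_1\cup B_2)=3$. This is the one point requiring care, and I would organise it using the structural description recorded before the definition of the bracelet graph: when $\beta(B_1\cup B_2)=3$ the subgraph $B_1\cup B_2$ is a subdivision of a contraction of one of the two graphs at the right of Figure \ref{F:CyclomaticNumber3}, so it is either (a) a union of three pairwise vertex-disjoint cycles $A_1,A_2,A_3$, or (b) the disjoint union of a single cycle $A$ with a connected subgraph of cyclomatic number two, in which consequently every two cycles share a vertex. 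In case (a), applying the key observation to the three bracelets $A_1\cup A_2$, $A_2\cup A_3$, $A_1\cup A_3$ shows that $A_1,A_2,A_3$ all lie in one part of the tripartition, so every bracelet inside $B_1\cup B_2$---in particular $B_1$ and $B_2$---receives the same value of $\chi$. In case (b), any bracelet inside $B_1\cup B_2$ consists of $A$ together with one unbalanced cycle of the cyclomatic-number-two component, and by the key observation that cycle lies in the same part as $A$; hence the $\chi$-value of such a bracelet is determined by the part containing $A$, and again $\chi(B_1)=\chi(B_2)$. Thus $\chi$ is a proper bracelet function for $(G,\Bb)$, and Theorem \ref{thm:bracelet_defn_of_matroid} shows that $\Cc(G,\Bb,\chi)=\Cc(G,\Bb,\Ll,\Ff)$ is the set of circuits of a matroid. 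The only real obstacle here is the propriety check, which as sketched reduces to these two cyclomatic-number-three cases, in each of which the hypothesis that every cycle in $\Ll$ meets every cycle in $\Ff$ forces the relevant cycles into a common part.
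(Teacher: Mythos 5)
Your proof is correct and takes essentially the same approach as the paper: define the bracelet function $\chi$ from the tripartition, verify it is proper, observe $\Cc(G,\Bb,\chi)=\Cc(G,\Bb,\Ll,\Ff)$, and apply Theorem \ref{thm:bracelet_defn_of_matroid}. The only difference is that you spell out the propriety check more explicitly via the two cyclomatic-number-three cases, whereas the paper just notes that adjacent bracelets share a common cycle (a fact it establishes in the proof of Theorem \ref{thm:BraceletFunctionPropriety}) and derives the contradiction directly from that shared cycle.
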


\begin{proof}
Define a bracelet function $\chi$ for $\GB$ as follows.
For each bracelet $B = C \cup C'$, define $\chi(B) = \mathsf{dependent}$ if $C$ and $C'$ are both in $\Ll$, and $\chi(B) = \mathsf{independent}$ if $C$ and $C'$ are both in $\Ff$.
Since $(\Bb,\Ll,\Ff)$ is a proper tripartition, no bracelet of $G$ contains a cycle in $\Ll$ while its other cycle is in $\Ff$.
This implies:
\begin{itemize}
\item $\chi$ is defined on every bracelet of $G$, and
\item  in the bracelet graph of $\GB$ no bracelet whose cycles are both in $\Ll$ is adjacent to any bracelet whose cycles are both in $\Ff$.
\end{itemize}
Thus $\chi$ is constant on each component of $\mathscr B\GB$; that is, $\chi$ is a proper bracelet function.
Hence $\Cc(G,\Bb,\Ll,\Ff) = \Cc(G,\Bb,\chi)$, which by Theorem \ref{thm:bracelet_defn_of_matroid} is the set of circuits of a matroid.
\end{proof}

Given a graph $G$ and a proper tripartition $(\Bb,\Ll,\Ff)$ of its cycles, denote the matroid of Theorem \ref{thm:tripartitions_give_matroids} by $M(G,\Bb,\Ll,\Ff)$. 
As for matroids arising from biased graphs with bracelet functions, if $M=M(G,\Bb,\Ll,\Ff)$ for some graph $G$ with proper tripartition $(\Bb,\Ll,\Ff)$, then we say $G$ \emph{is a graph for} $M$. 

We need just a couple more results before we can prove Theorem \ref{fixed_thm:equivalences}. 
First, we need the rank function for $M(G,\Bb,\Ll,\Ff)$. 
Let $G$ be a graph, and let $(\Bb, \mc L, \mc F)$ be a proper tripartition of the cycles of $G$. 
For a subset $X \subseteq E(G)$, denote by $V(X)$ the set of vertices incident to an edge in $X$ and by $c(X)$ the number of components of the induced biased subgraph $G[X]$. 
A subgraph that contains no unbalanced cycle is \emph{balanced}; otherwise it is \emph{unbalanced}. 
Denote by $b(X)$ the number of balanced components of $G[X]$.
Let
\[
l(X) = \begin{cases} 1 &\text{ if $G[X]$ contains a cycle in $\Ll$} \\ 0 &\text{ otherwise.}\end{cases}
\]

\begin{lemma} \label{lem:rank}
Let $G$ be a graph and let $(\Bb, \mc L, \mc F)$ be a proper tripartition of its cycles.
The rank of a subset $X \subseteq E(G)$ in $M(G,\Bb,\Ll,\Ff)$ is
\[
r(X) =
	\begin{cases}
		|V(X)| - b(X) &\text{ if } G[X] \text{ contains a cycle in } \mc F \\
		|V(X)| - c(X) + l(X) &\text{ otherwise.}
	\end{cases}
\]
\end{lemma}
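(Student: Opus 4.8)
The plan is to verify the claimed formula by showing that for each $X \subseteq E(G)$ the stated quantity equals the size of a maximum independent subset of $X$ in $M(G,\Bb,\Ll,\Ff)$. Since rank is additive over connected components of $G[X]$ and the formula decomposes as a sum over components (with the caveat that the $\Ll$-correction $l(X)$ and the $\Ff$-case are global to $X$), I would first reduce to the case that $G[X]$ is connected, and then handle the interaction between components separately. For a connected $G[X]$: if $X$ is balanced (contains no unbalanced cycle, hence no cycle in $\Ll\cup\Ff$), then the independent sets of $M$ inside $X$ are exactly the forests, so $r(X) = |V(X)| - 1 = |V(X)| - c(X) + l(X)$ with $c(X)=1$, $l(X)=0$; this is the classical graphic case. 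If $X$ is unbalanced and connected, a spanning connected subgraph containing exactly one cycle, and that cycle unbalanced (not in $\Bb$), has $|V(X)|$ edges and is independent (it induces neither a balanced cycle, nor a theta with no $\Bb$-cycle, nor handcuffs, nor a bracelet since it is connected with cyclomatic number $1$); so $r(X) \ge |V(X)|$, and since $r(X) \le |V(X)|$ always (no subgraph on $|V(X)|$ vertices with more than $|V(X)|$ edges is a forest-plus-one-edge, and two independent unbalanced cycles would have to be vertex-disjoint), we get $r(X) = |V(X)|$. Note $|V(X)| = |V(X)| - b(X)$ when $b(X)=0$, and $|V(X)| = |V(X)| - c(X) + l(X)$ when $c(X)=1$ and $l(X)=1$ — but we must check the subtlety that an unbalanced connected $G[X]$ could contain a cycle in $\Ff$ but no cycle in $\Ll$: then the formula gives $|V(X)| - b(X) = |V(X)|$ as well, consistent. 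The genuinely delicate single-component point is: a connected unbalanced $G[X]$ whose \emph{every} unbalanced cycle lies in $\Ff$ (none in $\Ll$) still has $l(X)=0$ but rank $|V(X)|$; this is why the formula's first case keys on ``contains a cycle in $\Ff$'' and the second on $l(X)$, and I would check these two descriptions agree on any connected unbalanced subgraph.

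Next I would treat multiple components. Write $G[X]$ with components $H_1,\dots,H_c$. The key structural facts from Lemma~\ref{lem:GGWquasicircuits} and the definition of $\Cc(G,\Bb,\Ll,\Ff)$ are: a circuit of $M$ meets at most two components of $G[X]$, and the only circuits meeting exactly two components are loose handcuffs with both cycles in $\Ff$ and tight handcuffs/bracelets living inside the relevant union. So an independent set $I \subseteq X$ is obtained by choosing, in each component $H_i$, a subgraph that is independent within $H_i$, \emph{plus} at most one extra edge total across all components, and that extra edge is permissible exactly when it does not complete a loose handcuff with both cycles in $\Ff$. Concretely: the maximum independent set takes $|V(H_i)| - [\,H_i \text{ balanced}\,]$ edges in each $H_i$ — i.e. $\sum_i (|V(H_i)| - 1) + (\text{number of unbalanced components})$ — and then may add one more edge connecting two balanced components (or adding a cycle inside a balanced component) provided doing so does not create a forbidden configuration. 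Summing, $\sum_i(|V(H_i)|-1) = |V(X)| - c(X)$, plus the count of unbalanced components; rewriting ``$c(X) - (\text{unbalanced components}) = b(X)$'' gives the base value $|V(X)| - b(X)$ in the $\Ff$-containing case, where no extra edge can be added (any new edge on top of a maximal such subgraph completes either a balanced cycle, a theta with no $\Bb$-cycle, tight handcuffs, or — crucially — a loose handcuff with both cycles in $\Ff$, all of which are circuits). In the remaining case, $G[X]$ contains no cycle in $\Ff$; then I must show the ``$+1$'' is available iff $G[X]$ contains a cycle in $\Ll$, i.e. $l(X)=1$. If some unbalanced cycle lies in $\Ll$, pick a spanning structure of each component with one unbalanced cycle in the $\Ll$-containing component and forests in the balanced ones; this has $|V(X)| - b(X) + 1$ edges after re-accounting, wait — here I must be careful: $|V(X)| - c(X) + (\text{unbalanced components}) + 1$; since in this case there is at least one unbalanced component (the one containing the $\Ll$-cycle) but possibly the only unbalanced cycles total are that one bracelet-able pair — I would verify independence by checking the candidate subgraph induces no bracelet (its two unbalanced cycles, if in distinct components, must not both be... ), no loose handcuff with both cycles in $\Ff$ (vacuous, no $\Ff$-cycles), no tight handcuff with neither cycle in $\Bb$. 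The one configuration to rule out is two vertex-disjoint unbalanced cycles both in $\Ll$ forming a bracelet, which \emph{is} a circuit (dependent); so the extra independent structure must avoid having two $\Ll$-cycles in separate components simultaneously spanned — manageable by choosing the spanning trees of the other unbalanced components to carry their unbalanced cycle rather than... actually an unbalanced component already forces an unbalanced cycle. This is the crux.

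Therefore the main obstacle I anticipate is precisely the bookkeeping in the mixed case with several unbalanced components and cycles distributed among $\Bb$, $\Ll$, $\Ff$: showing that the maximum independent subgraph has exactly $|V(X)| - b(X) + l(X)$ edges when $G[X]$ has no $\Ff$-cycle requires ruling out, among spanning candidates, the creation of a dependent bracelet (two vertex-disjoint $\Ll$-cycles) or tight handcuff, and showing that whenever $l(X)=1$ one \emph{can} realize the extra edge. I would organize this by: (a) proving $r(X) \le$ the claimed value via the observation that any independent $I$ has $\beta(I) \le 1$ overall \emph{with the handcuff exception}, more precisely that $I$ restricted to the span of any two components has cyclomatic number at most $1$ unless it is contained in a loose-handcuff-free configuration — cleanest to cite that $I$ contains no circuit of $\Cc(G,\Bb,\Ll,\Ff)$ and push through a direct count of $|E(I)|$ in terms of $|V(I)|$, $c(I)$, $b(I)$; and (b) proving $r(X) \ge$ the claimed value by exhibiting an explicit independent set of that size, taking a spanning forest in each balanced component, a ``spanning unicyclic with unbalanced cycle'' in each unbalanced component, then adding one connecting edge between two balanced components when and only when every unbalanced component would otherwise... — and when $l(X)=1$, instead routing the ``$+1$'' as: include, inside the (necessarily unbalanced) component with an $\Ll$-cycle, a second independent... no: the $+1$ over the $b(X)$-count already accounts for unbalanced components, so when $l(X)=1$ I instead keep one fewer edge in forests and add a bridging edge — I will pin down the exact construction during write-up. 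Throughout I would lean on Lemma~\ref{lem:GGWquasicircuits} to know circuits touch $\le 2$ components, and on the theta property to handle the balanced-cycle and theta cases uniformly with the classical frame/lift rank formulas, since both $F\GB$ and $L\GB$ agree with $M(G,\Bb,\Ll,\Ff)$ away from bracelets and loose handcuffs.
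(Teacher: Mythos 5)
Your proposal circles the right ideas but leaves the crucial step unresolved, and you say so yourself (``This is the crux'' / ``I will pin down the exact construction during write-up''). The missing observation is the one that makes the paper's proof short: \emph{the proper tripartition condition controls how many components can interact.} Concretely, if $G[X]$ contains an $\Ff$-cycle $D$, then no $\Ll$-cycle can lie in a component of $G[X]$ other than $D$'s component (it would be vertex-disjoint from $D$, contradicting properness). Consequently, if $G[X]$ has two or more unbalanced components, \emph{every} unbalanced cycle you choose for the spanning unicyclic subgraphs must lie in $\Ff$, so all bracelets formed across components have both cycles in $\Ff$ and are therefore independent, while adding any extra edge within a component creates a theta or tight handcuff (which always contains a circuit). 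This is what makes the ``spanning tree in each balanced component, spanning unicyclic with unbalanced cycle in each unbalanced component'' set both independent and maximal, giving $r(X)=|V(X)|-b(X)$ in one stroke. Dually, if $G[X]$ contains no $\Ff$-cycle, then every unbalanced cycle in $X$ lies in $\Ll$, so $G[X]$ can have \emph{at most one} unbalanced component (two would again give a vertex-disjoint pair of $\Ll$-cycles); hence any independent set has cyclomatic number at most one, and $r(X)=|V(X)|-c(X)+l(X)$ follows immediately. You identify the dangerous configuration --- a dependent bracelet of two $\Ll$-cycles --- but never notice that properness simply forbids more than one unbalanced component in the non-$\Ff$ case, which is what makes the bookkeeping evaporate.

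Two further points. Your opening claim that ``rank is additive over connected components of $G[X]$'' is false here precisely because bracelets span two components; you immediately hedge, but the hedge undoes the proposed reduction to connected $X$, which is not how the paper proceeds and would not simplify matters. Second, the expression ``$|V(X)|-b(X)+l(X)$'' you write for the non-$\Ff$ case is not the formula in the lemma; the correct expression is $|V(X)|-c(X)+l(X)$. (In fact, once you know there is at most one unbalanced component in this case, $c(X)-b(X)=l(X)$, so the two expressions coincide --- but that is a consequence of the structural fact you have not yet established, not a free rewriting.) In short: the approach via maximal independent sets is the right one, but the proof as proposed does not close because it never invokes the properness of the tripartition to bound the number of unbalanced components, which is exactly what the paper's two-line proof does.
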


\begin{proof}
Let $X \subseteq E(G)$, and suppose $X$ contains a cycle in $\Ff$.
Since every cycle in $\Ll$ meets every cycle in $\Ff$, $X$ cannot include two cycles $\Ll$ in different components. 
A maximal independent set in $G[X]$ consists of a spanning tree of each balanced component together with a spanning subgraph of each unbalanced component of cyclomatic number one in which the unique cycle is unbalanced. 
Thus $r(X) = |V(X)| - b(X)$.
Now suppose $X$ does not contain a cycle in $\Ff$.
Then a maximal independent set in $G[X]$ has cyclomatic number at most one, so 
\[ r(X) = |V(X)| - c(X) + l(X).
\qedhere \]
\end{proof}

We use the following straightforward corollary of a result of Tutte \cite[(4.34)]{Tutte:Lectures}, to prove the following lemma. 

\begin{theorem} \label{T:TuttesPathTheorem}
Let $\GB$ be a biased graph and suppose $G$ is 2-connected.
If $C$ and $C'$ are unbalanced cycles in $(G,\mc B)$, then there are unbalanced cycles $C_1,\ldots,C_n$ such that $C_1=C$, $C_n=C'$, and each $C_i\cup C_{i+1}$ is a theta.
\end{theorem}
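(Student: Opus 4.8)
The plan is to establish the equivalent statement that, in a $2$-connected biased graph, the unbalanced cycles are connected under the relation ``$C\cup C'$ is a theta'' (which plainly gives the chain $C_1,\dots,C_n$). The only place the theta property enters is through the following \emph{pivot observation}: if $C$ is unbalanced and $P$ is a path internally disjoint from $C$ with both ends on $V(C)$, then $C\cup P$ is a theta having $C$ as one of its three cycles; since a theta cannot have exactly two balanced cycles, at least one of the two cycles formed from $P$ and an arc of $C$ is unbalanced, and it is theta-adjacent to $C$ (its union with $C$ is the whole theta $C\cup P$). Combining this with Menger's theorem yields a \emph{reaching lemma}: in a $2$-connected biased graph, for every unbalanced cycle $C$ and every edge $e\notin E(C)$ there is an unbalanced cycle through $e$ that is theta-adjacent to $C$ -- use $2$-connectivity to find two disjoint paths from the ends of $e$ to $V(C)$, which together with $e$ form a $C$-path through $e$, and apply the pivot observation.

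Granted the reaching lemma, the problem reduces to the case in which the two unbalanced cycles share an edge: given unbalanced $C\ne C'$, choose an edge $e\notin E(C)$ (one exists, else $G=C$), pass from $C$ to an unbalanced cycle $D$ through $e$ theta-adjacent to it, and similarly pass from $C'$ to an unbalanced $D'$ through $e$ (or take $D'=C'$ if $e\in E(C')$); it then suffices to connect $D$ and $D'$, which both contain $e$. For cycles through a common edge I would argue by induction on $|E(G)|$. If some edge lies on neither of the two cycles, delete it when $G$ stays $2$-connected (balances and thetas avoiding it are unchanged, so a chain downstairs is a chain upstairs), and otherwise contract it -- one checks $G/e$ is still $2$-connected when $G\del e$ is not -- and lift the resulting chain. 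If every edge lies on one of the two cycles, then $G$ is their union, and one finishes by direct analysis: either $G$ is a theta (and we are done immediately) or the two cycles share several arcs, in which case rerouting through an intermediate cycle built from one shared arc reduces the number of shared arcs. The unbiased skeleton of this induction is exactly the statement that the cycles through a fixed element of a connected matroid are theta-linked, which is the cited consequence of Tutte's~$(4.34)$ applied to the cycle matroid of~$G$.

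I expect the main obstacle to be the bias bookkeeping under contraction. When $G\del e$ fails to be $2$-connected, $e$ induces a $2$-separation, and a theta of $G/e$ need not lift to a theta of $G$ -- restoring $e$ can split it into a disconnected pair of cycles or into handcuffs -- so one must track how the two sides of the separation meet and, where the lift fails, reroute on the affected side using the reaching lemma again. A second, related subtlety is that the pivot observation only repairs bias locally: an unbalanced replacement for a balanced cycle in a chain is theta-adjacent to one of its neighbours but not automatically to the other, which is why the argument is organised as an induction forcing $|E(G)|$ to drop rather than as an attempt to upgrade a single fixed unbiased chain in place. Throughout, $2$-connectedness is used both to run the reaching lemma and to guarantee that the minor taken at each step remains $2$-connected.
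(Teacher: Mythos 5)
The paper does not actually prove this statement: it cites it as ``a corollary of~(4.34)'' in Tutte's \emph{Lectures on Matroids}, with no argument given. So there is no ``paper's own proof'' to compare against; what you have done is attempt a reconstruction. Your skeleton --- the pivot observation, the Menger-based reaching lemma, the reduction to two unbalanced cycles through a common edge, and then an induction on $|E(G)|$ --- is a sensible plan, and the pivot observation is correct and genuinely uses the theta property in the right way.

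That said, the argument as written has real gaps. First, in the contraction step: you worry that a theta of $G/f$ need not lift to a theta of $G$, but for contracting a single link $f$ this worry is largely unfounded (one can check case-by-case on the degree of the contracted vertex $z$ in the theta that the preimage, with $f$ restored where needed, is again a theta with the lifted cycles among its three cycles). The issue you did \emph{not} flag is more serious: if $f$ is a chord of $D$ (or of $D'$), i.e.\ $f\notin E(D)$ but both endpoints of $f$ lie on $V(D)$, then $D$ is not a cycle of $G/f$ at all, so the inductive hypothesis cannot even be applied to $D$ in the minor. You would have to first replace $D$ by one of the two subcycles of $D\cup f$ (one of which is unbalanced and theta-adjacent to $D$ by the theta property), and then argue that the new cycle still contains $e$ --- which need not hold for the unbalanced one --- so a nontrivial rerouting is needed here too, not just at the $2$-separation. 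Second, the base case $G=D\cup D'$ is only gestured at (``direct analysis,'' ``rerouting through an intermediate cycle built from one shared arc''); this is precisely the heart of the matter and needs to be carried out, in particular one must verify that the intermediate cycles chosen are unbalanced. Third, ``$G/e$ is still $2$-connected when $G\setminus e$ is not'' requires $e$ to be neither in a parallel class of size $\ge 2$ nor a loop and $|V(G)|\ge 4$; these side conditions need to be woven into the edge choice. None of these is obviously fatal, but acknowledging a gap is not the same as closing it, and as it stands the induction does not go through.
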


\begin{lemma} \label{lem:everycyclecontainingChassamechivalue}
Let $\GB$ be a biased graph and let $\chi$ be a proper bracelet function for $\GB$.
Then for every unbalanced cycle $C$, every bracelet containing $C$ is assigned the same value by $\chi$.
\end{lemma}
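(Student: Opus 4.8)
The plan is to reduce the general statement to the 2-connected case handled by Tutte's theorem, and then propagate the value of $\chi$ along thetas via the bracelet graph. First I would fix an unbalanced cycle $C$ and suppose $B_1 = C \cup C_1$ and $B_2 = C \cup C_2$ are two bracelets containing $C$; the goal is $\chi(B_1) = \chi(B_2)$. Since $B_1, B_2$ are bracelets, $C_1$ and $C_2$ are unbalanced cycles vertex-disjoint from $C$. The key reduction is to pass to the block (maximal 2-connected subgraph, or a single edge) structure: $C_1$ and $C_2$ each lie in a single block of $G$, and $C$ lies in blocks disjoint from both. If $C_1$ and $C_2$ lie in the same block $H$, then since $H$ is 2-connected and $C_1, C_2$ are unbalanced cycles in $H$, Theorem~\ref{T:TuttesPathTheorem} gives a sequence $C_1 = D_1, D_2, \ldots, D_n = C_2$ of unbalanced cycles in $H$ with each $D_i \cup D_{i+1}$ a theta. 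Each $D_i$ is vertex-disjoint from $C$ (as $C$ is outside $H$, or at least shares at most a cut vertex — one must check $D_i$ avoids that cut vertex, which follows since the $D_i$ lie in the ``interior'' of $H$; more carefully, one restricts attention to cycles through a fixed edge of $C_1$ so all $D_i$ live in the part of $H$ disjoint from $C$). Then $C \cup D_i$ is a bracelet for each $i$, and $(C \cup D_i) \cup (C \cup D_{i+1}) = C \cup D_i \cup D_{i+1}$ has cyclomatic number $3$ (it is a vertex-disjoint union of $C$ with the theta $D_i \cup D_{i+1}$), so consecutive bracelets $C \cup D_i$ and $C \cup D_{i+1}$ are adjacent in $\mathscr B\GB$. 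Since $\chi$ is proper, $\chi$ is constant along this path, giving $\chi(B_1) = \chi(C \cup D_1) = \chi(C \cup D_n) = \chi(B_2)$.

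If $C_1$ and $C_2$ lie in different blocks of $G$, I would argue directly: $C_1$ and $C_2$ are vertex-disjoint (blocks meet in at most a cut vertex, and an unbalanced cycle uses at least two vertices of its block, so one checks they cannot overlap — or if they share the cut vertex, replace one of them using Tutte within its block to avoid that vertex). Then $C \cup C_1 \cup C_2$ is a disjoint union of three unbalanced cycles, so it has cyclomatic number $3$ and witnesses that $B_1 = C \cup C_1$ and $B_2 = C \cup C_2$ are themselves adjacent in $\mathscr B\GB$ — hence $\chi(B_1) = \chi(B_2)$ immediately by propriety. Actually this last observation suggests a cleaner unified approach: in \emph{every} case, $B_1$ and $B_2$ share the cycle $C$, so it suffices to show that two bracelets sharing a common cycle are always connected in $\mathscr B\GB$. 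When $B_1 \cup B_2$ has cyclomatic number exactly $3$ they are adjacent; when it has cyclomatic number $\geq 4$ (i.e. $C_1, C_2$ overlap or the configuration is more complex) we use Tutte inside the relevant 2-connected subgraph to interpolate a chain of cycles $D_i$ through a fixed edge of $C$'s ``partner side'' such that each $C \cup D_i$ is a bracelet and consecutive ones form a cyclomatic-$3$ configuration.

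The main obstacle I anticipate is the bookkeeping in the case where $C_1$ and $C_2$ are \emph{not} vertex-disjoint from each other: then $C_1 \cup C_2$ could be a theta, tight handcuff, or overlap more intricately, and $\beta(C \cup C_1 \cup C_2)$ could exceed $3$, so $B_1$ and $B_2$ need not be adjacent. Here one must choose a fixed edge $e$ of $C$ — no wait, rather a fixed edge in the 2-connected piece containing $C_1 \cup C_2$ and apply Tutte's theorem to cycles all passing through a common edge, or iterate: first move from $C_1$ to a cycle $C_1'$ vertex-disjoint from $C_2$ via a theta (noting $C \cup C_1$ and $C \cup C_1'$ are adjacent since $C \cup C_1 \cup C_1'$ has cyclomatic number $3$), reducing to the disjoint case. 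Ensuring at each step that the intermediate cycle remains vertex-disjoint from $C$ is the delicate point; this is where one genuinely needs that $C$ lives in a part of the graph that the whole Tutte chain can avoid, which should follow from $2$-connectivity arguments applied block-by-block. I would carry out the argument in the order: (1) reduce to showing any two bracelets sharing a cycle are $\mathscr B\GB$-connected; (2) dispose of the case $C_1, C_2$ vertex-disjoint via the cyclomatic-$3$ adjacency; (3) handle the overlapping case by one application of Tutte to reduce to (2), checking vertex-disjointness from $C$ throughout.
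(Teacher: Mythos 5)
There is a genuine gap, and it is precisely at the spot you flag as ``the delicate point.'' Your reduction to the block structure of $G$ is based on the assumption that $C$ lies in blocks disjoint from $C_1$ and $C_2$, so that the Tutte chain inside the block containing $C_1, C_2$ automatically avoids $C$. But that assumption is false in general: vertex-disjointness of $C$ from $C_1$ and $C_2$ does not put them in different blocks of $G$. In the typical case (e.g.\ $G$ itself 2-connected), all three cycles sit in the same block, and applying Theorem~\ref{T:TuttesPathTheorem} inside that block gives intermediate cycles $D_i$ that have no reason to avoid $V(C)$; if some $D_i$ meets $C$, then $C\cup D_i$ is not a bracelet and the chain breaks. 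Your suggested patches do not close this: there is no version of Tutte's theorem in the paper that routes all the $D_i$ through a fixed edge or away from a prescribed subgraph, and iterating ``move $C_1$ off $C_2$ by one theta step'' still produces an intermediate cycle with no control over its intersection with $C$.

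The missing idea, which is what the paper uses, is to apply Tutte's theorem not in a block of $G$ but in a block of $G-V(C)$. Since $C_1$ and $C_2$ are vertex-disjoint from $C$, both live in $G-V(C)$, and any chain produced by Tutte inside a block of $G-V(C)$ consists of cycles automatically disjoint from $C$, so each $C\cup D_i$ is a genuine bracelet and consecutive ones are adjacent in $\mathscr B\GB$ (the union $C\cup D_i\cup D_{i+1}$ is $C$ disjointly plus a theta, hence has cyclomatic number $3$). The remaining case is then $C_1,C_2$ in different blocks of $G-V(C)$: they share at most one vertex of $G-V(C)$, so $C\cup C_1\cup C_2$ again has cyclomatic number $3$ and $B_1, B_2$ are directly adjacent. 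This subsumes your vertex-disjoint observation and handles the overlapping case cleanly; without this change of ambient graph from $G$ to $G-V(C)$, the argument does not go through.
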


\begin{proof}
Consider an unbalanced cycle $C$ and two bracelets $B=C\cup C'$ and $B'=C\cup C''$.
If $C'$ and $C''$ are in different blocks of $G-V(C)$, then $B$ and $B'$ are adjacent in the bracelet graph $\mathscr B\GB$.
If $C'$ and $C''$ are in the same block of $G-V(C)$, then by Theorem \ref{T:TuttesPathTheorem} there is a path of bracelets $B_1, \ldots, B_n$ in $\mathscr B\GB$ such that $C$ is in each bracelet, $B_1=B$, and $B_n=B'$.
In either case $B$ and $B'$ are in the same component of $\mathscr B\GB$. 
Since $\chi$ is proper, $\chi(B)=\chi(B')$. 
\end{proof}

We also require the following lemma from \cite{JGT:JGT22177}. 

\begin{lemma}[\cite{JGT:JGT22177}, Lemma 2.6] \label{lem:GGW2.6}
Let $G$ be a framework for a matroid $M$. 
If $H$ is a subgraph of $G$ with $|E(H)| > |V(H)|$, then $E(H)$ is dependent in $M$. 
\end{lemma}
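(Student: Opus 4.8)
The plan is to prove the contrapositive by first reducing to the connected case and then ``growing'' a hypothetical independent set $E(H)$, one edge at a time, until it spans a whole component of $G$; at that point framework condition~(2) forces $|E(H)|\le|V(H)|$, contrary to the hypothesis.

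First I would dispose of disconnectedness: if $H$ has components $H_1,\dots,H_k$, then $\sum_j|E(H_j)|=|E(H)|>|V(H)|=\sum_j|V(H_j)|$, so $|E(H_j)|>|V(H_j)|$ for some $j$, and since $E(H_j)\subseteq E(H)$ it suffices to treat connected $H$. So assume $H$ is connected, let $K$ be the component of $G$ containing $H$, and suppose for contradiction that $E(H)$ is independent in $M$.

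The crux is a growth step: if $J$ is a connected subgraph of $K$ with $E(J)$ independent and $V(J)\ne V(K)$, and $f$ is an edge of $K$ with one endpoint $w\in V(J)$ and the other endpoint $u\notin V(J)$ (such an $f$ exists since $K$ is connected and $V(J)$ is a proper nonempty subset of $V(K)$), then $E(J)\cup\{f\}$ is independent. Indeed, otherwise $E(J)\cup\{f\}$ contains a circuit $C$ with $f\in C$ (since $E(J)$ is independent), so $C-f\subseteq E(J)$. As no edge of $E(J)$ is incident with $u$, we get $C-f\subseteq E(G-u)$, hence $f\in\cl(C-f)\subseteq\cl(E(G-u))\subseteq E(G-u)\cup\loops(u)$ by the closure condition in the definition of a framework. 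But $f$ is incident with $u$ and is not a loop, so $f\notin E(G-u)\cup\loops(u)$ --- a contradiction. (This argument uses only conditions (1)--(3); condition (4) on circuits spanning at most two components is not needed.)

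Finally I would iterate the growth step from $J_0=H$. Each application adjoins exactly one new vertex and one new edge while preserving independence and connectedness, so after $|V(K)|-|V(H)|$ steps I reach a connected subgraph $J$ of $K$ with $V(J)=V(K)$, $E(J)$ independent, and $|E(J)|=|E(H)|+|V(K)|-|V(H)|$. Since $E(J)\subseteq E(K)$, framework condition~(2) gives $|E(J)|\le r(E(K))\le|V(K)|$, whence $|E(H)|\le|V(H)|$ --- the required contradiction. I expect the only real obstacle to be the growth step, and within it the point to get right is that a freshly attached edge at a brand-new vertex cannot lie in the closure of the previously chosen edge set; this is exactly what the closure condition, applied at that vertex, supplies.
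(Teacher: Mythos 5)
The paper does not prove this lemma itself; it is quoted directly from Geelen, Gerards, and Whittle \cite{JGT:JGT22177}, so there is no internal argument to compare against. Your reconstruction is correct. The reduction to a connected $H$ is routine, and the growth step is the heart of the matter: if $f$ is a link joining $J$ to a fresh vertex $u$, then any circuit contained in $E(J)\cup\{f\}$ must use $f$ (since $E(J)$ is independent), whence $f\in\cl(C\setminus\{f\})\subseteq\cl(E(J))\subseteq\cl(E(G-u))$; framework condition~(3) at $u$ then forces $f\in E(G-u)\cup\loops(u)$, which is impossible for a non-loop edge incident with $u$. Each application of the step increases $|E(J)|$ and $|V(J)|$ by exactly one, so the surplus $|E(J)|-|V(J)|=|E(H)|-|V(H)|\ge 1$ is preserved; once $V(J)=V(K)$ you get $|V(K)|<|E(J)|\le r(E(K))\le|V(K)|$ by independence and condition~(2), a contradiction. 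Your observation that condition~(4) plays no role here is also correct. The argument is complete and self-contained.
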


We can now prove Theorem \ref{fixed_thm:equivalences}. 
When it is important that the distinction be clear, an edge whose endpoints are distinct is called a \emph{link}. 

\begin{proof}[Proof of Theorem \ref{fixed_thm:equivalences}]
($1. \Rightarrow 2.$) 
Suppose the circuits of $M$ are given by $\Cc(G,\Bb,\chi)$ for some proper bracelet function $\chi$.
Let $\Ll$ be the collection of cycles $C$ for which there is a bracelet $B$ containing $C$ with $\chi(B) = \mathsf{dependent}$; let $\Ff$ be the collection of unbalanced cycles not in $\Ll$. 
Suppose for a contradiction that $(\Bb,\Ll,\Ff)$ is not proper.
Thus there is a pair of vertex disjoint unbalanced cycles $C, C'$ such that $C \in \Ll$ and $C' \in \Ff$. 
Then $C \cup C'$ is a bracelet.
Since $C \in \Ll$ there is an unbalanced cycle $D$ that is vertex disjoint from $C$ such that $\chi(C \cup D)=\mathsf{dependent}$. 
Hence by Lemma \ref{lem:everycyclecontainingChassamechivalue}, $\chi(C \cup C') = \mathsf{dependent}$. 
But then $C' \in \Ll$, a contradiction. 

($2. \Rightarrow 3.$) 
Suppose $M=M(G,\Bb,\Ll,\Ff)$ where $(\Bb,\Ll,\Ff)$ is a proper tripartition of the cycles of $G$. 
Then $G$ is a framework for $M$: 
The condition $E(G)=E(M)$ is immediate. 
That $r(E(H)) \leq |V(H)|$ for every component $H$ of $G$ follows from Lemma \ref{lem:rank}.
The condition that for every circuit $C$ of $M$, $G[C]$ has at most two components holds because no element of $\Cc(G,\Bb,\Ll,\Ff)$ has more than two components. 
Finally, let $v \in V(G)$ and let $e$ be a link incident to $v$.
Since $\Cc(G,\Bb,\Ll,\Ff)$ is the set of circuits of $M$,
there is no circuit containing $e$ contained in $E(G-v) \cup \{e\}$, so
\[
\cl(E(G-v)) \subseteq E(G-v) \cup \loops(v)
\]
as required.

($3. \Rightarrow 1.$)
Let $\chi$ be the bracelet function defined according to the independence or dependence of each bracelet in $M$; that is, for each bracelet $B$ of $G$, define $\chi(B) = \mathsf{independent}$ if and only if $B \in \Ii(M)$. 
We show that the collection $\Cc(G,\Bb,\chi)$ is the collection of circuits of $M$. 

We first show $\Cc(G,\Bb,\chi) \subseteq \Cc(M)$. 
Let $X \in \Cc(G,\Bb,\chi)$. 
If $X$ is a balanced cycle then $X$ is a circuit of $M$ by definition. 
Suppose $X$ is  the edge set of a theta or tight handcuff with no cycle in $\Bb$.
By Lemma \ref{lem:GGW2.6} every subgraph $H$ of $G$ with $|E(H)| > |V(H)|$ has $E(H)$ dependent in $M$, so $X$ is dependent in $M$. 
Let $Y \subseteq X$ be a circuit of $M$. 
Since $X$ does not contain a cycle in $\Bb$, $Y$ is not a balanced cycle. 
Hence by Lemma \ref{lem:GGWquasicircuits}, $Y=X$. 
Thus $X \in \Cc(M)$.
Now suppose $X$ is the edge set of a bracelet of $G$. 
A bracelet is in $\Cc(G,\Bb,\chi)$ if and only if it is dependent in $M$. 
Let $Y \subseteq X$ be a circuit of $M$. 
By Lemma \ref{lem:GGWquasicircuits}, $Y=X$, so again $X \in \Cc(M)$. 
Finally, suppose $X$ is a loose handcuff in $G$. 
Again, by Lemma \ref{lem:GGW2.6}, $X$ is dependent in $M$ and so contains a circuit $Y$. 
By Lemma \ref{lem:GGWquasicircuits}, either $Y=X$ or $Y$ is the bracelet $B$ contained in $X$. 
But $\chi(B) = \mathsf{independent}$ so by definition $B$ is independent in $M$. 
Thus $X=Y \in \Cc(M)$. 

We now show that $\Cc(M) \subseteq \Cc(G,\Bb,\chi)$.
Let $X \in \Cc(M)$. 
By Lemma \ref{lem:GGWquasicircuits}, in $G$, $X$ is either (i) a balanced cycle, (ii) a connected subgraph with no cycle in $\Bb$, minimum degree at least two, and exactly one more edge than vertices, or (iii) a bracelet. 
If $X \in \Bb$, then $X \in \Cc(G,\Bb,\chi)$. 
If $X$ is one of the subgraphs of the form (ii), then $X$ is either a theta with no cycle in $\Bb$, tight handcuffs, or loose handcuffs. 
In the first two cases, $X \in \Cc(G,\Bb,\chi)$. 
If $X$ is a pair of loose handcuffs, then the bracelet properly contained in $X$ is independent, so $\chi(X)=\mathsf{independent}$ and $X \in \Cc(G,\Bb,\chi)$. 
Finally, suppose $X$ is a bracelet. 
Then $\chi(X)=\mathsf{dependent}$ so again $X \in \Cc(G,\Bb,\chi)$. 

Since $\Cc(G,\Bb,\chi)$ is the set of circuits of $M$, $M = M(G,\Bb,\chi)$, and by Theorem \ref{new_Thm_2point1}, $\chi$ is proper.
\end{proof}

{\color{black}
Observe that the requirement in Theorem \ref{fixed_thm:equivalences} that $M$ be connected is necessary: 
the class of quasi-graphic matroids is closed under direct sums, but in general neither the bracelet function construction nor the triparition construction permit a direct sum. 
To see this, let $M$ be the direct sum of a quasi-graphic matroid $N$ that is not frame and a quasi-graphic matroid $N'$ that is not lifted-graphic. 
Let $H$ be a framework for $N$ and let $H'$ be a framework for $N'$, and let $G$ be the disjoint union of $H$ and $H'$. 
Then $G$ is clearly a framework for $M$, but clearly there is no proper bracelet function $\chi$ nor proper tripartition $(\Bb,\Ll,\Ff)$ of the cycles of $G$ for which $M = M(G,\Bb,\chi)$ or $M = M(G,\Bb,\Ll,\Ff)$. 
However, we may remove the condition that the matroid $M$ be connected provided we replace it with the condition that the graph $G$ be connected: 

\begin{theorem}
\label{what_we_actually_proved}
Let $M$ be a matroid and let $(G,\Bb)$ be a biased graph with $E(G) = E(M)$. 
Consider the following statements. 
\begin{enumerate}
\item There is a proper bracelet function $\chi$ for $G$ such that $M=M(G,\Bb,\chi)$. 
\item There is a proper tripartition $(\Bb,\Ll,\Ff)$ of the cycles of $G$ such that $M=M(G,\Bb,\Ll,\Ff)$. 
\item $M$ is quasi-graphic with framework $G$ and $\Bb$ is the set of cycles of $G$ that are circuits of $M$. 
\end{enumerate} 
Statements $(1)$ and $(2)$ are equivalent; 
$(1)$ implies $(3)$; $(2)$ implies $(3)$. 
If $G$ is connected, then $(3)$ implies $(1)$ and $(2)$. 
In particular, if $G$ is connected, then $(1)$, $(2)$, and $(3)$ are equivalent.
\end{theorem}

\begin{proof}
The implications (1) $\Rightarrow$ (2) and (2) $\Rightarrow$ (3) in the proof of Theorem \ref{fixed_thm:equivalences} require neither $M$ nor $G$ be connected. 
The implication (2) $\Rightarrow$ (1) is proved in the course of the proof of Theorem \ref{thm:tripartitions_give_matroids}, also with no connectivity condition. 
The proof that if $G$ is connected then (3) $\Rightarrow$ (1) is identical to that of the case that (3) $\Rightarrow$ (1) in the proof of Theorem \ref{fixed_thm:equivalences}, except in the final sentence, where rather than relying on Theorem \ref{new_Thm_2point1} to deduce that $\chi$ is proper, we apply Theorem \ref{thm:BraceletFunctionPropriety} to deduce that $\chi$ is proper. 
Thus provided $G$ is connected, statements (1), (2), and (3) are equivalent regardless of whether or not $M$ is connected. 
\end{proof}

Observe that Theorem \ref{what_we_actually_proved} really does describe quasi-graphic matroids not captured in Theorem \ref{fixed_thm:equivalences}: 
Let $M$ be a matroid represented by a biased graph $(G,\Bb)$ where $G$ has a cut vertex $v$. 
Let $(X,Y)$ be a bipartition of $E(G)$ with $V(X) \cap V(Y) = \{v\}$, and where $G[Y]$ is balanced. 
Then $M = M(G,\Bb,\chi)$ is a disconnected quasi-graphic matroid. 
Of course, we may also describe this matroid by applying Theorem \ref{fixed_thm:equivalences} to each of its connected components and considering the disjoint union of the frameworks of each connected component. 
}

\subsection{Degenerate tripartitions} 

We close this section with a useful observation about proper tripartitions. 
Let $G$ be a graph and let $(\Bb, \mc L, \mc F)$ be a proper tripartition of the cycles of $G$.
Call the collection $\mc L$ (respectively $\mc F$) \emph{degenerate} if $\Ll$ (resp.\ $\Ff$) is empty or no two cycles in $\Ll$ (resp.\ $\Ff$) are vertex disjoint.
Write $\Uu = \Ll \cup \Ff$. 
Observe that: 
\begin{itemize} 
\item If $\mc L$ is degenerate then $M(G, \Bb, \mc L, \mc F) = M(G,\Bb,\emptyset, \Uu) = F(G,\Bb)$. 
\item If $\mc F$ is degenerate then $M(G, \Bb, \mc L, \mc F) = M(G,\Bb,\Uu,\emptyset) = L(G,\Bb)$.
\end{itemize} 
Call the tripartition $(\Bb,\Ll,\Ff)$ \emph{degenerate} if one of $\Ll$ or $\Ff$ is degenerate. 

\section{Examples} 
\label{sec:examples} 

Examples of quasi-graphic matroids that are neither frame nor lifted-graphic do not, perhaps, easily spring to mind. 
Example \ref{ex:bicircularK_n} below is a non-example which may provide some intuition for why this may be. 

\begin{example} \label{ex:bicircularK_n}
The bracelet graph of $(K_n, \emptyset)$ is connected, so every bracelet function $\chi$ for $(K_n, \emptyset)$ is constant.
Hence if $M = M(K_n, \emptyset, \chi)$ then $M$ is either the lift matroid $L(K_n,\emptyset)$ or frame matroid $F(K_n,\emptyset)$.
\end{example}

The proper tripartition construction of Theorem \ref{fixed_thm:equivalences} makes finding examples a little easier. 

\begin{example} \label{ex:signed_graph} 
Let $G$ be a graph, and let $C$ be a 4-cycle in $G$. 
Let the edges of $C$ in cyclic order be $e_1, e_2, e_3, e_4$. 
Let 
\begin{itemize} 
\item $\Bb$ be the collection of cycles of $G$ that meet $C$ in an even number of edges, 
\item $\Ll$ be the set of cycles 
that meet $C$ in just $e_1$ or just $e_3$, 
\item $\Ff$ be the cycles 
that meet $C$ in just $e_2$, just $e_4$, or in precisely three edges. 
\end{itemize} 
Since $\Bb$ obeys the theta property and  every cycle in $\Ll$ meets every cycle in $\Ff$, $(\Bb,\Ll,\Ff)$ is a proper tripartition of the cycles of $G$. 
Thus $M(G,\Bb,\Ll,\Ff)$ is quasi-graphic. 
As long as $G$ has sufficiently many vertices and is sufficiently connected neither $\Ll$ nor $\Ff$ is degenerate. 
In particular, one can choose $G$ to be a complete graph on at least 8 vertices to obtain a quasi-graphic matroid 
with a representation with neither $\Ll$ nor $\Ff$ degenerate. 
Setting $|V(G)|=n$, this yields a family of quasi-graphic matroids of rank $n$ with $O(n^2)$ elements. 
Observe however that for each matroid $M(G,\Bb,\Ll,\Ff)$ in this family, deleting any one of the four vertices in $V(C)$ yields a subgraph with a degenerate tripartition. 
\end{example}

\begin{example}
Let $G$ be the graph obtained as follows from the complete bipartite graph $K_{a,b}$, with bipartition $(A, B)$ where
$|A|=a$, $|B|=b$. Add the edges of an $a$-cycle $C$ on $A$ and add the edges of a $b$-cycle $C'$ on $B$ to obtain graph
$G$. The bracelet $C \cup C'$ is an isolated vertex in the bracelet graph for $(G,\emptyset)$. Thus if we let one of $\mc L$ and $\mc F$ be $\{C,C'\}$ and the other consist of the remaining cycles in $G$, then the tripartition $(\emptyset,\mc L,\mc F)$ is a proper tripartition that is non-degenerate. 
All of these graphs $G$ with tripartitions have the property that for every vertex $v$ in $V(G)$, removing $v$ yields a subgraph with a degenerate tripartition.
\end{example}

\begin{example}
Let $G$ be a graph that is embedded in the torus and let $\mc B$ be the collection of contractible cycles of $G$.
Each non-contractible cycle is in a homology class $(a,b) \in \bb Z \times \bb Z$ where $(a,b)$ is a relatively prime pair.
Cycles in two different homology classes must intersect.
Thus each bracelet consists of a pair of cycles in the same homology class, and two bracelets from distinct homology classes cannot be adjacent in the bracelet graph of $\GB$.
Thus the number of connected components of the bracelet graph is the number of homology classes containing bracelets.
For the $2m\times 2m$ torus grid or any embedded graph containing it as a minor, there is a bracelet in each of the homology classes $(1,k)$, $(-1,k)$, $(k,1)$, and $(-k,1)$ for $k \in \{0,\ldots,m-1\}$
(see Figure \ref{F:TorusGrid}) so this number is at least $4m-4$.
\begin{figure}[tbp]
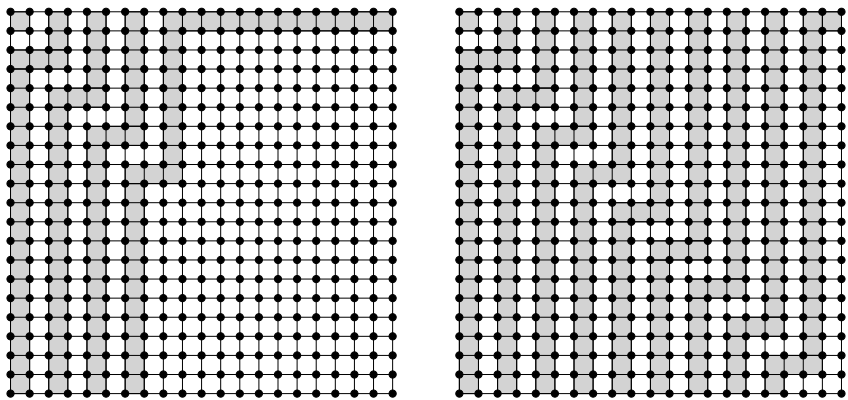

\begin{center}
\includegraphics[page=3,scale=0.45]{QuasiGraphicMatroids_Figures}\qquad
\includegraphics[page=2,scale=0.45]{QuasiGraphicMatroids_Figures}
\end{center}
\caption{In a $2m\times 2m$ torus grid there is a bracelet in each of homology classes $(1,0),(1,1),\ldots,(1,m-1)$. 
In each torus grid above, the pair of cycles bounding the union of the shaded faces form a bracelet.} 
\label{F:TorusGrid}
\end{figure}
Thus graphs embedded on the torus yield a family of biased graphs with $n$ vertices and $O(n)$ edges whose bracelet graphs can have $O(\sqrt n)$ components. So an $n$-vertex biased graph $(G,\mc B)$ coming from an embedding in the torus 
may potentially yield 
in the order of $2^{\sqrt{n}}$ pairwise non-isomorphic quasi-graphic matroids. 

In contrast to the families described in the previous two examples, there is no finite bound $b$ such that from every graph in the family it is possible to remove at most $b$ vertices to obtain a subgraph with a degenerate tripartition. 
\end{example}

The following example shows that there are frame matroids and lifted-graphic matroids represented by graphs with non-degenerate tripartitions. 

\begin{example} \label{ex:differingreps}
Each matroid in this example is sparse paving of rank four. 
Thus to check the claimed equalities, it is enough to check that the matroids have the same circuits of size four.

(a) Let $G_1$ be the graph at left in Figure \ref{fig:Quasigraphicframeexample}, with 
$\Bb_1 = \{wxyz, \allowbreak acwy, \allowbreak acxz, \allowbreak bdwy, \allowbreak bdxz\}$. 
Let $G_2$ be the graph at right in Figure \ref{fig:Quasigraphicframeexample}, with  
$\Bb_2 = \{abwx, \allowbreak abyz, \allowbreak cdxy, \allowbreak cdwz\}$, 
$\Ff_2 = \{ac, bd\}$, and $\Ll_2$ the set of unbalanced cycles not in $\Ff_2$. 
\begin{figure}[tbp] 
\begin{center} 
\includegraphics[scale=1]{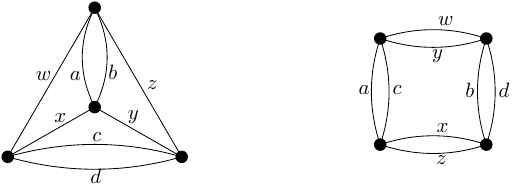}
\end{center} 
\caption{Example \ref{ex:differingreps}(a). Two graphs for a frame matroid.}
\label{fig:Quasigraphicframeexample} 
\end{figure} 
Then $F(G_1, \Bb_1) = M(G_2,\Bb_2,\Ll_2,\Ff_2)$. 

(b) Let $G_1$ be the graph obtained by adding an edge in parallel with each edge of $C_4$, labelled as shown at left in Figure \ref{fig:Quasigraphicframeliftexample1}, with $\Bb_1 = \{bcxy, adwz, bdxz\}$. 
Let $G_2$ be the same graph but labelled as shown at right in Figure \ref{fig:Quasigraphicframeliftexample1}, with $\mathcal{B}_2 = \{ abcd, \allowbreak wxyz\}$, $\mathcal{L}_2 = \{ bx, dz \}$, and $\Ff_2$ the set of unbalanced cycles not in $\Ll_2$. 
Then $F(G_1, \Bb_1) = M(G_2, \Bb_2, \Ll_2, \Ff_2)$. 
\begin{figure}[tbp] 
\begin{center} 
\includegraphics{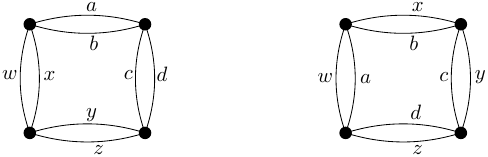}
\end{center} 
\caption{Example \ref{ex:differingreps}(b).}
\label{fig:Quasigraphicframeliftexample1} 
\end{figure} 
Let $\Bb_3 = \{abcd, wxyz, abyz, cdwx\}$. 
Then $L(G_1, \Bb_1) \allowbreak = \allowbreak M(G_2,\allowbreak \Bb_3, \Ll_2, \Ff_2)$. 
\end{example}

\section{On quasi-graphic matroids} 

In this section we apply Theorem \ref{fixed_thm:equivalences} to prove some results about quasi-graphic matroids. 

\subsection{Representability over a field} 

In \cite{JGT:JGT22177} Geelen, Gerards, and Whittle prove the following.

\begin{theorem}[{\cite[Theorem 1.4]{JGT:JGT22177}}] \label{T:GGWNonRepresentable}
Let $M$ be a 3-connected representable matroid. If $M$ is quasi-graphic, then $M$ is either a frame matroid or a lifted-graphic matroid.
\end{theorem}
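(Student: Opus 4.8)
The plan is to argue by contradiction: suppose $M$ is a $3$-connected representable quasi-graphic matroid that is neither frame nor lifted-graphic, and derive a contradiction from the rich structure that Theorem~\ref{thm:equivalences} forces on $M$. Fix a framework $G$ for $M$, let $\Bb$ be the set of cycles of $G$ that are circuits of $M$, and invoke the implication $(3)\Rightarrow(2)$ of Theorem~\ref{thm:equivalences} to obtain a proper tripartition $(\Bb,\Ll,\Ff)$ of the cycles of $G$ with $M = M(G,\Bb,\Ll,\Ff)$. The hypothesis that $M$ is neither frame nor lifted-graphic means, by the observations in Section~\ref{sec:quasigraphicintro} on degenerate tripartitions, that \emph{both} $\Ll$ and $\Ff$ are non-degenerate: each of $\Ll$ and $\Ff$ contains a vertex-disjoint pair of cycles. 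So we have a bracelet $B_\Ll = C_1 \cup C_2$ with $C_1, C_2 \in \Ll$ (hence $\chi(B_\Ll) = \mathsf{dependent}$) and a bracelet $B_\Ff = D_1 \cup D_2$ with $D_1, D_2 \in \Ff$ (hence $\chi(B_\Ff) = \mathsf{independent}$), where $\chi$ is the associated proper bracelet function. Because $(\Bb,\Ll,\Ff)$ is proper, every cycle in $\Ll$ meets every cycle in $\Ff$; in particular $C_i$ meets $D_j$ for all $i,j$, so these four cycles cannot be pairwise vertex disjoint, and the two bracelets lie in different components of the bracelet graph $\mathscr B\GB$.

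Next I would bring in representability to obtain a rank-preserving linear picture and analyse the two bracelets simultaneously. The key tension: in a frame representation one lifts $G$ to lie in the span of a frame $B$ of the ambient matroid, and every bracelet is independent; in a lift representation there is a single extra coordinate $e$, and every bracelet becomes dependent (each unbalanced cycle projects to the same nonzero multiple of $e$). The presence of both a dependent and an independent bracelet means the representation can be neither of these pure types, yet Lemma~\ref{lem:rank} pins down the rank of every set. I would compute, using Lemma~\ref{lem:rank}, the rank of the edge set $Z$ of the subgraph $C_1 \cup C_2 \cup D_1 \cup D_2$ (together with suitable connecting structure), and compare the two submatroids $M|(E(C_1)\cup E(C_2))$ and $M|(E(D_1)\cup E(D_2))$: the former has a circuit that is a bracelet, the latter does not. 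The aim is to locate a small minor of $M$ — obtained by deleting edges outside a carefully chosen subgraph and contracting edges of the connecting paths — that is $3$-connected (or can be made so) and is simultaneously forced, by the bracelet structure, to be both "frame-like" and "lift-like" on overlapping parts, contradicting the known small-case structure. Here one can lean on $3$-connectivity to rule out the $2$-separations that would otherwise let the frame part and the lift part coexist in different pieces (this is exactly the phenomenon isolated later in the paper's characterisation of biased graphic matroids as $2$-sums).

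The main obstacle, I expect, is the bookkeeping needed to reduce to a genuinely small and $3$-connected configuration: the cycles $C_i, D_j$ may be long and may intersect each other in complicated ways, and deleting/contracting to shrink them must be done without destroying $3$-connectivity of $M$ or the quasi-graphic structure of the minor. One clean route is: first use $3$-connectivity of $M$ to show $G$ is (essentially) $2$-connected, then use Tutte's path theorem (Theorem~\ref{T:TuttesPathTheorem}) and Lemma~\ref{lem:everycyclecontainingChassamechivalue} to propagate the value of $\chi$ along chains of thetas of unbalanced cycles; the conclusion that $\chi$ is constant on each component of $\mathscr B\GB$ combined with $3$-connectivity of the matroid should force $\mathscr B\GB$ to be connected, hence $\chi$ constant, hence $M$ frame or lift — the contradiction. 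So the real content is showing that $3$-connectivity of $M$ implies connectivity of $\mathscr B\GB$; once that is established (using the representability only insofar as it is already absorbed into being quasi-graphic, or, if needed, to exclude a bounded list of exceptional low-connectivity configurations), Example~\ref{ex:bicircularK_n}-style reasoning finishes the proof.
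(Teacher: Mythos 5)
Your setup is right: invoking Theorem~\ref{thm:equivalences} to get $M = M(G,\Bb,\Ll,\Ff)$ with a proper tripartition, and observing that if $M$ is neither frame nor lifted-graphic then both $\Ll$ and $\Ff$ are non-degenerate, is exactly the paper's starting point. But the rest of the proposal has two problems, one of incompleteness and one of substance.

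The incompleteness: you gesture at computing ranks via Lemma~\ref{lem:rank} and comparing them with a linear representation, and mention finding a small minor, but you never name the mechanism that delivers the contradiction. The paper's actual argument is short and concrete: take vertex-disjoint cycles $A,B \in \Ll$ and vertex-disjoint cycles $C,D \in \Ff$, note that every cycle of $\Ll$ meets every cycle of $\Ff$ so all the pairwise intersections among $\{A,B\}$ and $\{C,D\}$ are non-empty, read off all the relevant ranks directly from Lemma~\ref{lem:rank} (no minor-taking or connectivity reductions needed), and plug them into \emph{Ingleton's inequality}. Every term cancels except a $-1$ on the left, giving $-1 \geq 0$. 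Since Ingleton's inequality holds for every matroid linearly representable over a field, $M$ cannot be representable. Notably, $3$-connectivity plays no role at all in this argument; the paper's theorem, as proved, drops that hypothesis entirely.

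The substantive error is in your fallback plan: you propose to show that $3$-connectivity of $M$ forces the bracelet graph $\mathscr B\GB$ to be connected, hence $\chi$ constant, hence $M$ frame or lift. This cannot work. Example~\ref{ex:signed_graph} (taking $G$ a complete graph on at least $8$ vertices) constructs quasi-graphic matroids with non-degenerate $\Ll$ and $\Ff$, hence with disconnected bracelet graph, that are not frame and not lifted-graphic; by Theorem~\ref{lem:M2conn_G2conn} these can even be taken $3$-connected. So ``$M$ $3$-connected $\Rightarrow$ $\mathscr B\GB$ connected'' is simply false. The point of the theorem is not that the disconnected-bracelet-graph situation is excluded by matroid connectivity; it is that such a situation is incompatible with \emph{linear representability}. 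Your phrase that representability is ``already absorbed into being quasi-graphic, or, if needed, to exclude a bounded list of exceptional low-connectivity configurations'' misplaces where the hypothesis is doing work: representability is essential and carries the entire argument, via Ingleton.
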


An easy application of Theorem \ref{fixed_thm:equivalences} and Ingleton's inequality provides an alternate proof, and shows that 3-connectivity is not required. 
Ingleton showed \cite{Ingleton:TheInequality} that if $M$ is linearly representable over some field, then for all subsets $A, B, C, D \subseteq E(M)$ the following inequality holds: 
\begin{align*}
&r(A\cup B) + r(A \cup C) + r(A \cup D) + r(B \cup C) + r(B \cup D) \\
\ge & \ r(A) + r(B) + r(A \cup B \cup C) + r(A \cup B \cup D) + r(C \cup D)
\end{align*}
It is an easy check that a quasi-graphic matroid $M(G,\Bb,\Ll,\Ff)$ with neither $\Ll$ nor $\Ff$ degenerate violates Ingleton's inequality: 

\begin{theorem} \label{thm:nonrep}
Let $M$ be a 
{\color{black} connected} 
quasi-graphic matroid, and suppose $M$ is neither lifted-graphic nor frame. Then $M$ is not representable over any field.
\end{theorem}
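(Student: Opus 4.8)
The plan is to combine Theorem~\ref{thm:equivalences} with Ingleton's inequality. Suppose $M$ is quasi-graphic but neither lifted-graphic nor frame. By Theorem~\ref{thm:equivalences} there is a graph $G$ and a proper tripartition $(\Bb,\Ll,\Ff)$ of its cycles with $M = M(G,\Bb,\Ll,\Ff)$. The observations on degenerate tripartitions show that if $\Ll$ were degenerate then $M = F(G,\Bb)$ is frame, and if $\Ff$ were degenerate then $M = L(G,\Bb)$ is lifted-graphic; since $M$ is neither, both $\Ll$ and $\Ff$ are non-degenerate. Non-degeneracy of $\Ll$ means there are two vertex-disjoint cycles $C_1, C_2 \in \Ll$, and non-degeneracy of $\Ff$ gives two vertex-disjoint cycles $C_3, C_4 \in \Ff$. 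The idea is to feed the edge sets of these four cycles (after a minor adjustment to arrange pairwise disjointness) into Ingleton's inequality as the sets $A, B, C, D$ and compute both sides using the rank formula of Lemma~\ref{lem:rank}, arriving at a contradiction.

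First I would arrange that the four cycles $C_1, C_2, C_3, C_4$ can be chosen pairwise vertex-disjoint. Since every cycle in $\Ll$ meets every cycle in $\Ff$, the pair $C_1,C_2$ from $\Ll$ and the pair $C_3,C_4$ from $\Ff$ cannot simply be taken disjoint across the two families. The fix is to pass to a suitable restriction: instead of using full cycles, one works inside the matroid restricted to $E(C_1)\cup E(C_2)$ (a bracelet in $\Ll$, hence dependent, and in fact a circuit) and separately to $E(C_3)\cup E(C_4)$ (a bracelet in $\Ff$, hence independent). The cleanest route is to set $A = E(C_1)$, $B = E(C_2)$, $C = E(C_3)$, $D = E(C_4)$ and note that Ingleton's inequality must hold for these subsets of $E(M)$ regardless of how they intersect; then evaluate every term via Lemma~\ref{lem:rank}. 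A single cycle $C_i$ is a circuit when balanced and independent when unbalanced; each of our four cycles is unbalanced (being in $\Ll$ or $\Ff$), so $r(E(C_i)) = |E(C_i)| = |V(C_i)|$, and similarly each pairwise union $E(C_i)\cup E(C_j)$ has its rank determined by whether the two cycles share vertices and whether the induced subgraph contains an $\Ff$-cycle.

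The computation then splits according to which cycles are vertex-disjoint. Write $n_i = |V(C_i)|$. The key point is the asymmetry built into Lemma~\ref{lem:rank}: a subgraph consisting of two vertex-disjoint unbalanced cycles both in $\Ff$ has rank $|V| - b = |V| - 0 = n_i + n_j$ (a loose handcuff on them is a circuit, so the bracelet is independent), whereas two vertex-disjoint unbalanced cycles both in $\Ll$ form a bracelet which is a circuit, giving rank $n_i + n_j - 1$. The union $E(C_i)\cup E(C_j)$ with $C_i\in\Ll$, $C_j\in\Ff$ is a bracelet not of either homogeneous type, but since they meet (properness!) the induced subgraph is connected with cyclomatic number two and contains an $\Ff$-cycle, so its rank is $|V(C_i\cup C_j)|$. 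Plugging all six left-hand terms and five right-hand terms into Ingleton's inequality and cancelling the $|V|$-type contributions, the inequality reduces to a contradiction of the form $0 \ge 1$ (the single $+1$ coming from the $\Ll$-bracelet $E(A)\cup E(B) = E(C_1)\cup E(C_2)$ being a circuit while the $\Ff$-bracelet $E(C)\cup E(D)$ is independent, and the mixed unions contributing no such correction).

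The main obstacle is the bookkeeping around vertex intersections: because an $\Ll$-cycle and an $\Ff$-cycle are forced to share at least one vertex, the quantities $|V(C_i\cup C_j)|$ for mixed pairs are not simply $n_i + n_j$, and one must verify that these overlap terms cancel cleanly between the two sides of Ingleton's inequality. I expect this to work out because Ingleton's inequality is "vertex-count-neutral" — the $|V|$ contributions are governed by the same submodular-type identity on both sides — leaving only the discrete corrections $b(\cdot)$, $c(\cdot)$, $l(\cdot)$, and it is exactly in those corrections that the $\Ll$-side and $\Ff$-side behave oppositely. To keep the argument robust one could instead first restrict $M$ to $E(C_1)\cup E(C_2)\cup E(C_3)\cup E(C_4)$ and, using Tutte's theorem and 2-connectivity within blocks, replace the cycles by a configuration where the only intersections are the unavoidable ones; but the direct Ingleton computation above should already suffice.
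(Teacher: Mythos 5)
Your proposal matches the paper's proof exactly: invoke Theorem~\ref{thm:equivalences} to write $M = M(G,\Bb,\Ll,\Ff)$, observe that non-degeneracy of both $\Ll$ and $\Ff$ supplies a vertex-disjoint $\Ll$-pair and a vertex-disjoint $\Ff$-pair, feed the four cycle edge sets into Ingleton's inequality, and evaluate each rank via Lemma~\ref{lem:rank}. The ``bookkeeping'' you flag as the main obstacle does cancel cleanly, just as you suspect: writing $a,b,c,d$ for the vertex counts and $ac,ad,bc,bd$ for the pairwise intersections of the mixed pairs, the left side minus the right side simplifies to $-1$, giving the absurdity $-1\ge 0$; no restriction, modification of the cycles, or appeal to Tutte's theorem is needed. (One minor slip: Ingleton's inequality has five terms on each side, not six on the left.)
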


\begin{proof}
Suppose for a contradiction that $M$ is linearly representable over a field. 
By Theorem \ref{fixed_thm:equivalences} $M = M(G, \Bb, \Ll, \Ff)$ where $G$ is a framework for $M$ and $(\Bb,\Ll,\Ff)$ is a proper tripartition of its cycles. 
Since $M$ is neither lifted-graphic nor frame, neither $\Ff$ nor $\Ll$ is degenerate. 
Hence there exist cycles $A, B, C, D$ where 
$A, B \in \Ll$ with $V(A) \cap V(B) = \emptyset$ 
and $C, D \in \Ff$ with $V(C) \cap V(D) = \emptyset$.
For notational convenience, set $a=|V(A)|$, $b=|V(B)|$, $c=|V(C)|$, $d=|V(D)|$, and ${ab} = |V(A) \cap V(B)|$, ${ac} = |V(A) \cap |V(C)|$, and so on. 
By Ingleton's inequality, and since every cycle in $\Ll$ meets every cycle in $\Ff$,
\begin{align*}
&r(A\cup B) + r(A \cup C) + r(A \cup D) + r(B \cup C) + r(B \cup D) \\
&=
(a+b-1) + (a+c-{ac}) + (a+d-{ad}) + (b + c - {bc}) + (b + d - {bd}) \\
\geq \
& r(A) + r(B) + r(A \cup B \cup C) + r(A \cup B \cup D) + r(C \cup D) \\
&= a+b+(a+b+c-ac-bc)+(a+b+d-ad-bd)+(c+d)
\end{align*}
which simplifies to the absurdity that $-1 \geq 0$. 
\end{proof}

\subsection{Minors}

In \cite{JGT:JGT22177} Geelen, Gerards, and Whittle prove the following.

\begin{theorem}[{\cite[Theorem 1.5]{JGT:JGT22177}}] \label{T:GGW_framework_with_loop}
Let $G$ be a framework for a 3-connected matroid $M$. If $G$ has a loop, then $M$ is either frame or lifted-graphic. 
\end{theorem}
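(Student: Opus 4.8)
The plan is to show that if $G$ is a framework for $M$ with a loop $e$ that is not a loop of $M$, then in the tripartition language of Theorem~\ref{thm:equivalences} one of $\mc L$, $\mc F$ must be degenerate, whence $M$ is lifted-graphic or frame by the observations in Section~\ref{sec:Intermediate_matroids}. The loop $e$ is unbalanced (since $e$ is not a loop of $M$), so $e$ is an unbalanced cycle of the biased graph $\GB$ obtained from the framework, where $\Bb$ is the set of cycles of $G$ that are circuits of $M$. First I would record the key structural fact that $e$ being a single-vertex unbalanced cycle means $e$ forms a bracelet with \emph{every} unbalanced cycle of $G$ that avoids its vertex $v = V(e)$; and more usefully, every unbalanced cycle $C$ with $v \notin V(C)$ is paired with $e$ in a bracelet, while unbalanced cycles through $v$ cannot be in a bracelet with $e$ at all.

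The heart of the argument is to apply Lemma~\ref{lem:everycyclecontainingChassamechivalue} (with the proper bracelet function $\chi$ given by Theorem~\ref{thm:equivalences}, $(3)\Rightarrow(1)$): since $e$ is a fixed unbalanced cycle, \emph{every bracelet containing $e$ is assigned the same value by $\chi$}. Call this common value $\epsilon \in \{\mathsf{dependent},\mathsf{independent}\}$. Now take any two vertex-disjoint unbalanced cycles $C, C'$ of $G$ with $v \notin V(C) \cup V(C')$ — this is the generic situation, and I would argue (using 3-connectivity or, better, arrange to avoid needing it) that if no such pair exists then already one of $\mc L$, $\mc F$ is degenerate. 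Given such a pair, $e\cup C$ and $e\cup C'$ are both bracelets through $e$, so both get value $\epsilon$; hence $C$ and $C'$ lie on the same side of the $\mc L$/$\mc F$ split. The remaining task is to handle unbalanced cycles passing through $v$: for such a cycle $C$, I would find (using the framework axioms, particularly $\cl(E(G-v))\subseteq E(G-v)\cup\loops(v)$, and a connectivity/rerouting argument via Tutte's path theorem, Theorem~\ref{T:TuttesPathTheorem}) another unbalanced cycle $C''$ avoiding $v$ that is in a common bracelet or $\mathscr B\GB$-component with $C$, forcing $C$ to the same side. Concluding: all vertex-disjoint unbalanced-cycle pairs get value $\epsilon$, so if $\epsilon = \mathsf{dependent}$ then $\mc F$ is degenerate and $M$ is lifted-graphic, and if $\epsilon = \mathsf{independent}$ then $\mc L$ is degenerate and $M$ is frame.

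The main obstacle I anticipate is the bookkeeping around unbalanced cycles through the vertex $v$ carrying the loop: such cycles never form a bracelet with $e$, so Lemma~\ref{lem:everycyclecontainingChassamechivalue} applied to $e$ says nothing about them directly. One must instead chain through the bracelet graph — connecting a cycle $C\ni v$ to some cycle $C'\not\ni v$ via a path of bracelets in $\mathscr B\GB$, or via a theta (Theorem~\ref{T:TuttesPathTheorem}) that lets Lemma~\ref{lem:everycyclecontainingChassamechivalue} propagate the $\chi$-value. Guaranteeing that such a connection exists is exactly where the graph's connectivity is used in~\cite{JGT:JGT22177}; the pleasant point is that Theorem~\ref{thm:equivalences} reduces the whole problem to a statement purely about $\chi$ being constant on $\mathscr B\GB$-components, and once $e$ pins down one component's value, propriety of $\chi$ and the mutual-meeting condition of the tripartition do the rest — so I expect the proof to be shorter than the original and to need at most 2-connectivity of each relevant block rather than 3-connectivity of $M$.
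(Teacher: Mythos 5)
Your proposal would reach the correct conclusion, but it takes a markedly more circuitous route than the paper's proof and invents an obstacle that does not exist. The paper's argument for Theorem~\ref{thm:quasigraphic_with_a_loop} is a four-line observation: write $M = M(G,\Bb,\Ll,\Ff)$ by Theorem~\ref{thm:equivalences}; the loop $e$ at vertex $v$ is itself a cycle of $G$ which is not balanced (since $M$ is loopless), so $e \in \Ll$ or $e \in \Ff$; the proper tripartition condition says every cycle in $\Ll$ meets every cycle in $\Ff$, and the only cycles that meet $\{e\}$ are those through $v$; hence if $e \in \Ll$ every cycle of $\Ff$ passes through $v$, so $\Ff$ is degenerate and $M = L\GB$, and symmetrically if $e \in \Ff$ then $\Ll$ is degenerate and $M = F\GB$.

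You correctly note that $e$ is an unbalanced cycle, but you then apply Lemma~\ref{lem:everycyclecontainingChassamechivalue} to the bracelets \emph{containing} $e$ rather than putting $\{e\}$ itself into the tripartition and using the mutual-meeting condition directly. This is what generates the worry you flag at the end about unbalanced cycles through $v$: such cycles never form a bracelet with $e$, so your bracelet-chaining argument has to reach them some other way, and you propose a rerouting argument via Theorem~\ref{T:TuttesPathTheorem}. None of this is necessary. Cycles through $v$ are irrelevant to degeneracy: once you know all cycles of one class pass through $v$, that class is degenerate \emph{by definition}, no matter how those cycles are distributed. And you get that conclusion immediately from the single fact that $\{e\}$ is a cycle and the tripartition is proper — no bracelet graph, no propagation, no connectivity hypothesis beyond the losslessness of $M$. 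The paper's proof needs neither 3-connectivity nor even 2-connectivity of the graph, which is strictly weaker than what you anticipate needing.

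So your approach is not wrong so much as it is answering a harder question than it needs to: you are trying to show that $\chi$ is constant across almost all of $\mathscr{B}(G,\Bb)$, when all that is required is that one side of $\Ll/\Ff$ is degenerate, and degeneracy is a purely combinatorial condition about vertex-disjointness that $e$ controls directly. The intended moral of the result — and the reason the paper presents it as a one-paragraph application of Theorem~\ref{thm:equivalences} — is precisely that once you pass to the tripartition picture, the presence of an unbalanced loop is seen to be a trivial degeneracy phenomenon.
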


An easy application of Theorem \ref{fixed_thm:equivalences} provides an alternative proof, for which 3-connectivity is not required. 
(We trust the reader is not confused by the common use of the term ``loop'' for a circuit of size 1 in a matroid as well as for an edge of a graph whose endpoints coincide.) 

\begin{theorem} \label{thm:quasigraphic_with_a_loop}
Let $G$ be a framework for a 
{\color{black} connected}
matroid $M$. 
If $G$ has a loop, then $M$ is either lifted-graphic or frame. 
\end{theorem}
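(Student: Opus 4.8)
The plan is to push everything through the structural dichotomy recorded in the subsection on degenerate tripartitions: a matroid $M(G,\Bb,\Ll,\Ff)$ equals the frame matroid $F(G,\Bb)$ when $\Ll$ is degenerate and equals the lifted-graphic matroid $L(G,\Bb)$ when $\Ff$ is degenerate. So it suffices to show that the presence of a graph loop in $G$ forces one of $\Ll$, $\Ff$ to be degenerate. First I would invoke Theorem \ref{thm:equivalences}: taking $\Bb$ to be the set of cycles of $G$ that are circuits of $M$, condition $(3)$ holds, so condition $(2)$ gives a proper tripartition $(\Bb,\Ll,\Ff)$ of the cycles of $G$ with $M=M(G,\Bb,\Ll,\Ff)$.

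Now let $e$ be a loop of $G$, incident with the vertex $v$. The one-edge subgraph $\{e\}$ is a cycle of $G$ (a cycle of length one), hence lies in exactly one of $\Bb$, $\Ll$, $\Ff$. It cannot lie in $\Bb$: the only one-element members of $\Cc(G,\Bb,\Ll,\Ff)$ are the cycles in $\Bb$, so $e\in\Bb$ would make $\{e\}$ a circuit of $M$, that is, $e$ a loop of $M$, contradicting the hypothesis that $M$ is loopless. Therefore $e\in\Ll$ or $e\in\Ff$. Suppose $e\in\Ff$ (the case $e\in\Ll$ is identical after interchanging the roles of $\Ll$ and $\Ff$, and of ``frame'' and ``lifted-graphic''). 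Since $(\Bb,\Ll,\Ff)$ is proper, every cycle in $\Ll$ meets the cycle $\{e\}$; as the only vertex of $\{e\}$ is $v$, this says that every cycle in $\Ll$ passes through $v$. Hence no two cycles of $\Ll$ are vertex disjoint, i.e. $\Ll$ is degenerate, and so $M=M(G,\Bb,\emptyset,\Ll\cup\Ff)=F(G,\Bb)$ is a frame matroid. Symmetrically, $e\in\Ll$ forces $\Ff$ degenerate and $M=L(G,\Bb)$ lifted-graphic.

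I expect no genuine obstacle here: the whole argument is immediate once Theorem \ref{thm:equivalences} is available. The only points needing a moment's care are bookkeeping ones --- that a graph loop counts as a cycle of $G$ for the purpose of the tripartition (so it must be sorted into $\Bb$, $\Ll$, or $\Ff$), that ``meets'' in the definition of a proper tripartition means ``shares a vertex'' (so a loop at $v$ meets exactly the cycles through $v$), and that although the tripartition realizing $M$ need not be unique, we only use one of them, namely whichever Theorem \ref{thm:equivalences} supplies. If anything deserves to be called the crux, it is simply the observation that looplessness of $M$ forces $e\notin\Bb$; everything after that is a direct consequence of properness.
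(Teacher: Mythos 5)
Your proof is correct and follows essentially the same route as the paper's: invoke Theorem \ref{thm:equivalences} to obtain a proper tripartition, note that looplessness of $M$ forces the loop $e$ into $\Ll$ or $\Ff$, and then use properness to conclude that the opposite class is degenerate. The only difference is that you spell out the justification for $e\notin\Bb$ a little more explicitly than the paper does.
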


\begin{proof}
By Theorem \ref{fixed_thm:equivalences}, $M = M(G,\Bb,\Ll,\Ff)$ for some proper tripartition $(\Bb,\Ll,\Ff)$ of the circuits of $G$. 
Let $e$ be a loop of $G$, incident to vertex $v$. 
Since $M$ does not have a loop, either $e \in \Ll$ or $e \in \Ff$. 
Suppose $e \in \Ll$. 
Since every cycle in $\Ll$ meets every cycle in $\Ff$, every cycle in $\Ff$ contains $v$. 
Thus $\Ff$ is degenerate, so $M$ is lifted-graphic. 
If $e \in \Ff$ then every cycle in $\Ll$ contains $v$, so $\Ll$ is degenerate and $M$ is frame. 
\end{proof}

{\color{black}
Disconnected counter-examples to Theorems \ref{thm:nonrep} and \ref{thm:quasigraphic_with_a_loop} are easy to construct. 
Let $N$ be a frame matroid that is not lifted-graphic, representable over a field $\FF$ with a framework $H$ with a loop, 
and let $N'$ be a lifted-graphic matroid that is not frame, representable over $\FF$ with a framework $H'$ that has a loop. 
Let $G$ be the disjoint union of $H$ and $H'$. 
Let $M$ be the direct sum of $N$ and $N'$. 
Then $G$ is clearly a framework for $M$, $M$ is clearly neither lifted-graphic nor frame, and $M$ is clearly representable over $\FF$. 
}

Now the proper tripartition construction of Theorem \ref{fixed_thm:equivalences} makes verifying that the class of quasi-graphic matroids is minor-closed a straightforward check. 
Let $G$ be a graph with proper tripartition $(\Bb,\Ll,\Ff)$ of its cycles. 
Let $(G,\Bb,\Ll,\Ff) \setminus e$ denote the graph $G \setminus e$ together with the tripartition $(\Bb',\Ll',\Ff')$ of the cycles of $G \setminus e$ obtained from $(\Bb,\Ll,\Ff)$ by taking
\begin{align*}
\Bb' = \{ C : C \in \Bb \text{ and } C \text{ does not contain } e \} \\
\Ll' = \{ C : C \in \Ll \text{ and } C \text{ does not contain } e \} \\
\Ff' = \{ C : C \in \Ff \text{ and } C \text{ does not contain } e \}
\end{align*}
As long as $e$ is not a loop, let $(G,\Bb,\Ll,\Ff)/e$ denote the graph $G/e$ together with the tripartition $(\Bb'',\Ll'',\Ff'')$ of the cycles of $G/e$ obtained from $(\Bb,\Ll,\Ff)$ by taking
\begin{align*}
\Bb'' = \{ C  : C \in \Bb \text{ or } C \cup e \in \Bb \} \\
\Ll'' = \{ C  : C \in \Ll \text{ or } C \cup e \in \Ll \} \\
\Ff'' = \{ C  : C \in \Ff \text{ or } C \cup e \in \Ff \}
\end{align*}
The following is a straightforward application of definitions and consideration of circuit-subgraphs. 

\begin{theorem} \label{prop:G2conn_minors}
\mbox{}
\begin{itemize}
\item $M(G,\Bb,\Ll,\Ff) \del e = M\br{(G,\Bb,\Ll,\Ff) \setminus e}$
\item $M(G,\Bb,\Ll,\Ff) / e = M\left((G,\Bb,\Ll,\Ff) / e \right)$ as long as $e$ is not a loop in $G$.
\end{itemize}
\end{theorem}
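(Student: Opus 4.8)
The plan is to prove both statements by showing that the circuit collections match, using the cryptomorphic circuit description $\Cc(G,\Bb,\Ll,\Ff)$ together with the minor operations on tripartitions defined just above the statement. For deletion, I would first check that $(\Bb',\Ll',\Ff')$ really is a proper tripartition of the cycles of $G\del e$: the cycles of $G\del e$ are exactly the cycles of $G$ avoiding $e$, so the three sets partition them; $\Bb'$ inherits the theta property from $\Bb$ because any theta subgraph in $G\del e$ is a theta subgraph in $G$; and every cycle of $\Ll'$ meets every cycle of $\Ff'$ since this already held in $G$. Then I would verify $\Cc(G,\Bb,\Ll,\Ff)\del e$, i.e.\ the members of $\Cc(G,\Bb,\Ll,\Ff)$ not using $e$, coincides with $\Cc(G\del e,\Bb',\Ll',\Ff')$. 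This is a case check over the five circuit types (balanced cycle, theta with no cycle in $\Bb$, tight handcuff with no cycle in $\Bb$, bracelet with both cycles in $\Ll$, loose handcuff with both cycles in $\Ff$): each such subgraph avoiding $e$ is the same subgraph in $G\del e$ with the same cycles, and the membership conditions transfer verbatim via the definitions of $\Bb',\Ll',\Ff'$. Conversely every circuit-subgraph in $G\del e$ is one in $G$ avoiding $e$. Since $\Cc(G,\Bb,\Ll,\Ff)$ is the circuit set of a matroid (Theorem~\ref{thm:tripartitions_give_matroids}) and deletion of a matroid has circuit set exactly the circuits avoiding $e$, this gives $M(G,\Bb,\Ll,\Ff)\del e = M((G,\Bb,\Ll,\Ff)\del e)$.

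For contraction (with $e$ not a loop), I would again start by confirming $(\Bb'',\Ll'',\Ff'')$ is a proper tripartition of the cycles of $G/e$. Here the subtle point is the bijection between cycles of $G/e$ and the cycles of $G$: a cycle $C$ of $G/e$ either lifts to a cycle $C$ of $G$ (when $C$ does not "see" the contracted vertex in a way that forces $e$) or to a cycle $C\cup e$ of $G$; the definition $\Bb''=\{C : C\in\Bb \text{ or } C\cup e\in\Bb\}$ is set up precisely so that the class of the cycle in $G/e$ matches the class of its preimage in $G$. I would note that the theta property for $\Bb''$ follows because a theta in $G/e$ pulls back (adding $e$ to some of the three paths as needed) to a theta in $G$, and that $\Ll''$ meets $\Ff''$ for the analogous reason. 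Then the core is to check $\Cc(G,\Bb,\Ll,\Ff)/e = \Cc(G/e,\Bb'',\Ll'',\Ff'')$, where the left side is the minimal nonempty members of $\{X\setminus e : X\in\Cc(G,\Bb,\Ll,\Ff)\}$. Again this is a case analysis over circuit-subgraph types, tracking what each type becomes after contracting an edge: contracting an edge of a balanced cycle can yield a shorter balanced cycle or (if the cycle had length two) nothing; contracting an edge of a theta can yield a theta or a cycle or a "bowtie" degenerating appropriately; handcuffs and bracelets behave similarly. The bookkeeping is routine but needs care at the degenerate small cases, and one must confirm minimality is preserved.

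**The main obstacle** I anticipate is the contraction case, specifically making the cycle correspondence between $G$ and $G/e$ fully precise and checking that contracting an edge of one circuit-subgraph type always lands inside $\Cc(G/e,\Bb'',\Ll'',\Ff'')$ (and that nothing new appears) — in particular handling the cases where contraction collapses a 2-cycle or merges two components of a handcuff or bracelet into a single vertex-sharing configuration, which can change the circuit type. Since the statement is asserted to be "a straightforward application of definitions and consideration of circuit-subgraphs," I would organize the proof as: (i) propriety of the new tripartitions; (ii) the deletion circuit identity; (iii) the contraction circuit identity; and invoke Theorem~\ref{thm:tripartitions_give_matroids} at each stage so that the set-level circuit identities immediately upgrade to matroid equalities, rather than re-verifying matroid axioms by hand. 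An alternative route for the contraction case, possibly cleaner, is to use the rank formula of Lemma~\ref{lem:rank}: compute $r_{M(G,\Bb,\Ll,\Ff)/e}(X) = r_{M(G,\Bb,\Ll,\Ff)}(X\cup e) - 1$ for $X\subseteq E(G)\setminus e$ and compare with the rank formula applied to $M(G/e,\Bb'',\Ll'',\Ff'')$, reducing everything to the elementary identities $|V_{G/e}(X)| = |V_G(X\cup e)| - 1$, $c_{G/e}(X) = c_G(X\cup e)$, $b_{G/e}(X)=b_G(X\cup e)$, and $l_{G/e}(X)=l_G(X\cup e)$, the last of which uses precisely the definition of $\Ll''$; this sidesteps the most delicate circuit-type degeneracies.
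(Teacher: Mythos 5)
Your main plan—verify that the induced tripartitions are proper and then match the circuit collections case-by-case over the five circuit-subgraph types, invoking Theorem~\ref{thm:tripartitions_give_matroids} so that the set-level identities upgrade to matroid equalities—is exactly what the paper's one-line remark (``a straightforward application of definitions and consideration of circuit-subgraphs'') is gesturing at. You correctly identify the delicate points (cycles of $G/e$ versus cycles of $G$ containing both endpoints of $e$ but not $e$ itself, minimality after taking $C\setminus e$, and small degeneracies).

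The alternative rank-function route, however, has a genuine gap as stated. The four ``elementary identities'' do not all hold, and, more importantly, the case split in Lemma~\ref{lem:rank}---whether the induced subgraph contains a cycle in $\Ff$ (respectively $\Ff''$)---does not in general align between $G[X\cup e]$ and $(G/e)[X]$. Concretely, take $G$ on vertices $\{u,v,w_1,w_2\}$ with edges $e=uv$, $a=uw_1$, $b=w_1v$, $c=uw_2$, $d=w_2v$, with $\Bb=\{\{e,a,b\}\}$, $\Ll=\{\{e,c,d\}\}$, $\Ff=\{\{a,b,c,d\}\}$ (a proper tripartition). With $X=\{a,b,c,d\}$, the graph $G[X\cup e]=G$ contains $\{a,b,c,d\}\in\Ff$, so the rank is computed by the frame formula; but in $G/e$ the cycles are $\{a,b\}\in\Bb''$ and $\{c,d\}\in\Ll''$, so $\Ff''=\emptyset$ and $(G/e)[X]$ is handled by the lift formula. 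The identities alone do not reconcile the two branches; one needs the extra observation that, in such mismatched cases, $G[X\cup e]$ has a unique unbalanced component and $l_G(X\cup e)=1$ (so its frame and lift ranks coincide), which in turn relies on the properness of the tripartition (any second unbalanced component would carry a cycle in $\Ll$ disjoint from the $\Ff$-cycle). One can also arrange for $l_{G/e}(X)\ne l_G(X\cup e)$ by swapping the roles of $\Ll$ and $\Ff$ in this example. So the rank route does close, but not by the naive identity-by-identity reduction; the circuit-based argument is the more reliable path here.
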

By Theorem \ref{thm:quasigraphic_with_a_loop} if $e \in E(G)$ is a loop and $e \notin \Bb$ then 
{\color{black} the component of} 
$M(G,\Bb,\Ll,\Ff)$ 
{\color{black} containing $e$} 
is lifted-graphic or frame and hence so is 
{\color{black} the corresponding component of} 
$M(G, \Bb, \Ll, \Ff)/e$. 
Thus minors of quasi-graphic matroids are again quasi-graphic.

\subsection{Connectivity} 

A connected lifted-graphic matroid may be represented by a disconnected graph. 
This cannot occur for quasi-graphic matroids that are not lifted-graphic. 

\begin{theorem} \label{thm:connected_QG_connected_G}
Let $G$ be a graph and let $(\Bb,\Ll,\Ff)$ be a proper tripartition of the cycles of $G$ such that $M(G,\Bb,\Ll,\Ff)$ is connected. 
\begin{enumerate} 
\item If $\Ff$ is non-degenerate then $G$ is connected. 
\item If neither $\Ff$ nor $\Ll$ is degenerate then $G$ is 2-connected. 
\end{enumerate} 
\end{theorem}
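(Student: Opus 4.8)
The plan is to prove both parts by contradiction, assuming the graph decomposes and exhibiting a separation of the matroid. First I would handle part (1). Suppose $\Ff$ is non-degenerate but $G$ is disconnected; write $G = G_1 \cup G_2$ where $G_1, G_2$ are nonempty unions of components with $E(G_1) = E_1$, $E(G_2) = E_2$ a partition of $E(G)$. Since $\Ff$ is non-degenerate, there are vertex-disjoint cycles $C, D \in \Ff$. I would first argue that $C$ and $D$ must lie in the same $G_i$: if $C \subseteq G_1$ and $D \subseteq G_2$, then since $(\Bb,\Ll,\Ff)$ is a proper tripartition, every cycle of $\Ll$ meets both $C$ and $D$, hence meets both $G_1$ and $G_2$, which is impossible for a cycle (it lies in a single component); so $\Ll$ would be empty and $M$ would be frame, but a frame matroid $F\GB$ on a disconnected biased graph is disconnected (as noted in Section~1), contradicting connectivity of $M$. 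So say $C, D \subseteq G_1$. Now the key claim is that $E_1$ and $E_2$ form a separation of $M$, i.e. $r(E_1) + r(E_2) = r(E)$, which contradicts connectivity (as $E_2 \ne \emptyset$, and $E_1 \ne \emptyset$ since it contains $C$). To see this I would use the rank formula of Lemma~\ref{lem:rank}: since $G[E]$ contains the $\Ff$-cycle $C$, $r(E) = |V(E)| - b(E)$; since $G[E_1]$ contains $C$, $r(E_1) = |V(E_1)| - b(E_1)$; and $V(E) = V(E_1) \sqcup V(E_2)$, $b(E) = b(E_1) + b(E_2)$ because balanced components are unchanged. If $G[E_2]$ also contains an $\Ff$-cycle we are done immediately; otherwise $r(E_2) = |V(E_2)| - c(E_2) + l(E_2)$, and since $c(E_2) \ge b(E_2)$ we would get $r(E_1) + r(E_2) \le r(E)$ unless $l(E_2) = 1$ and $c(E_2) = b(E_2) + 1$ — but $l(E_2) = 1$ means $G[E_2]$ contains an $\Ll$-cycle, which cannot be vertex-disjoint from $C$, contradiction. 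So $l(E_2) = 0$ and the separation follows. (One must also check the reverse inequality $r(E_1) + r(E_2) \ge r(E)$, which is submodularity plus $r(E_1 \cap E_2) = r(\emptyset) = 0$.)

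For part (2), assume both $\Ff$ and $\Ll$ are non-degenerate, so by part (1) $G$ is connected, and suppose $G$ has a cut-vertex $v$; write $G = G_1 \cup G_2$ with $V(G_1) \cap V(G_2) = \{v\}$ and $E_1 = E(G_1)$, $E_2 = E(G_2)$ partitioning $E(G)$, both nonempty. Pick vertex-disjoint $C, D \in \Ff$ and vertex-disjoint $C', D' \in \Ll$. As in part (1), I would first argue (using propriety: every $\Ll$-cycle meets every $\Ff$-cycle, and a cycle through a cut-vertex still lies in one side's edge set) that after possibly relabelling we may take $C \subseteq G_1$ and $D \subseteq G_2$ (if both $\Ff$-cycles were on the same side, say $G_1$, then every $\Ll$-cycle meeting both of them must pass through $v$, forcing $\Ll$ degenerate). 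Similarly $C' \subseteq G_1$, $D' \subseteq G_2$ after relabelling — but now $C'$ (in $\Ll$) must meet $D$ (in $\Ff$), both living on opposite sides except at $v$, so $v \in V(C') \cap V(D)$, and likewise $v$ lies on $C, D, D'$; in particular $v$ lies on $C$ and $D$ which are supposed to be vertex-disjoint $\Ff$-cycles — wait, $C, C'$ are on side $G_1$ and need not contain $v$. Let me instead argue directly that $E_1, E_2$ is a $2$-separation: I would show $r(E_1) + r(E_2) = r(E) + 1$ using Lemma~\ref{lem:rank}. Both $G[E_1]$ and $G[E_2]$ contain an $\Ff$-cycle (one containing $C$, the other $D$), and $G[E]$ does too, so all three ranks are of the form $|V(\cdot)| - b(\cdot)$; now $|V(E_1)| + |V(E_2)| = |V(E)| + 1$ since the sides share exactly $v$, and $b(E_1) + b(E_2) = b(E)$ since no component becomes balanced or unbalanced by gluing at the single vertex $v$ (the glued component is unbalanced iff one of its pieces is, as an unbalanced cycle on either side survives), giving $r(E_1) + r(E_2) = r(E) + 1$. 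Together with $|E_1|, |E_2| \ge 2$ — which holds because each side carries a whole cycle — this is an exact $2$-separation, contradicting $2$-connectivity of $M$.

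The main obstacle I anticipate is the bookkeeping in the sub-case analysis of part (1) where $G[E_2]$ contains neither an $\Ff$-cycle nor an $\Ll$-cycle: one needs to verify carefully via Lemma~\ref{lem:rank} that the rank is genuinely additive and not merely subadditive, and to make sure the relevant piece is not so small that the "separation" is trivial (the side containing the $\Ff$-cycle $C$ has at least $3$ edges, and the other side is nonempty, so we do get $r(E_1)+r(E_2) = r(E)$ with both parts nonempty, contradicting connectivity). A secondary subtlety is the argument that, in each part, the two vertex-disjoint $\Ff$-cycles (resp.\ the $\Ff$- and $\Ll$-cycles) cannot both fall on the same side; this is where propriety of the tripartition and the characterisation of frame/lift matroids on disconnected biased graphs from Section~1 do the real work, so I would state that fact — a connected frame matroid cannot be represented by a disconnected biased graph, and a connected lift matroid's representing biased graph can be taken connected — cleanly before using it. Finally, throughout I would use that $r(X \cap Y) = 0$ when $X \cap Y = \emptyset$ together with submodularity to get the easy direction of each rank identity, so only the hard direction needs the explicit formula.
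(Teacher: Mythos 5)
Your part (1) is correct, though considerably more elaborate than the paper's. The paper's argument is immediate: if $G$ has more than one component, connectivity of $M$ forces some circuit to span two components, and from $\Cc(G,\Bb,\Ll,\Ff)$ such a circuit must be a bracelet with both cycles in $\Ll$; since $\Ff$ is nonempty, a cycle in $\Ff$ must meet both of those cycles, which is impossible across components. Your rank computation gets there, and the side-case (disjoint $\Ff$-cycles in distinct components) is handled correctly, but the circuit-level argument is more economical.

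Part (2), however, has two genuine gaps. First, your justification for ``we may take $C \subseteq G_1$ and $D \subseteq G_2$'' does not hold: you claim that if $C, D \subseteq G_1$ then every $\Ll$-cycle meeting both must pass through $v$, but an $\Ll$-cycle lying entirely inside $G_1$ can meet both $C$ and $D$ while avoiding $v$ altogether, so $\Ll$ is not forced to be degenerate by your reasoning. (What does follow is that every $\Ll$-cycle lies in $G_1$; by symmetry then every $\Ff$-cycle lies in $G_1$, so $G_2$ is balanced and $(E_1,E_2)$ is a $1$-separation, contradicting connectivity --- that is the correct way to close this case, and it is essentially what the paper does via the preliminary observation that each side must contain an unbalanced cycle.) Second, and more seriously, the rank argument you substitute at the end establishes $r(E_1)+r(E_2) = r(E)+1$, which is a $2$-separation, and you assert this ``contradicts $2$-connectivity of $M$.'' But $M$ is only assumed \emph{connected}, which for matroids is the same as $2$-connected, and a connected matroid is perfectly free to have $2$-separations --- only a $1$-separation would contradict the hypothesis. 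So the rank computation produces no contradiction at all. The paper's proof of part (2) never needs a rank inequality; the contradiction comes purely from propriety: pick disjoint unbalanced cycles on opposite sides of $v$ with one avoiding $v$, observe both must land in the same one of $\Ll,\Ff$, and show that this forces every cycle of the other class through $v$, making it degenerate. Interestingly, the ``opposite sides'' contradiction you derived and then second-guessed (concluding $v \in V(C)\cap V(D)$) is in fact sound once the opposite-sides placement is correctly justified as above; your self-doubt was misplaced, and it was the switch to the rank argument that broke the proof.
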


\begin{proof} 
(1) 
Suppose to the contrary that $G$ has more than one component. 
Since $M(G,\Bb,\Ll,\Ff)$ is connected, it has a circuit consisting of a bracelet $C_1 \cup C_2$ with both cycles in $\Ll$, such that $C_1$ and $C_2$ are contained in different components of $G$. 
But a cycle in $\Ff$ cannot meet both $C_1$ and $C_2$, contradicting the fact that 
$\Ff$ is non-degenerate. 

(2) 
Suppose for a contradiction that both $\Ff$ and $\Ll$ are non-degenerate and that $G$ has a cut vertex $v$. 
Let $(A,B)$ be a partition of $E(G)$ with $V(A) \cap V(B) = \{v\}$. 
Each of $G[A]$ and $G[B]$ must contain an unbalanced cycle, else $M(G,\Bb,\Ll,\Ff)$ would be disconnected. 
Further, at least one of $G[A]$ or $G[B]$ must contain an unbalanced cycle avoiding $v$, since otherwise both $\Ff$ and $\Ll$ would be degenerate. 
Thus there is an unbalanced cycle $C$ in $G[A]$ and an unbalanced cycle $C'$ in $G[B]$ such that $C$ and $C'$ are disjoint. 
Suppose both $C$ and $C'$ are in $\Ff$. 
One of $C$ or $C'$ must contain $v$, for otherwise no cycle in $\Ll$ could meet both $C$ and $C'$, and $\Ll$ is non-empty by assumption. 
Without loss of generality, assume $C$ contains $v$. 
Then $C'$ does not contain $v$. 
This implies that no cycle in $G[A]$ is in $\Ll$, for such a cycle could not meet $C'$. 
Thus every cycle in $\Ll$ is in $G[B]$. 
But since every cycle in $\Ll$ must meet $C$, this implies that every cycle in $\Ll$ contains $v$ and so that $\Ll$ is degenerate, contrary to assumption. 
Similarly, the assumption that both $C$ and $C'$ are in $\Ll$ leads to the contradiction that $\Ff$ is degenerate. 
\end{proof}

Thus if $M$ is connected yet has a framework that is not connected, $M$ is lifted-graphic. 
Moreover, by Theorem \ref{thm:connected_QG_connected_G} if $M$ is connected we can always ask for a connected framework for $M$:

\begin{corollary} \label{thm:liftedgraphic_has_connected_graph} 
Let $G$ be a graph and let $(\Bb,\Ll,\Ff)$ be a proper tripartition of the cycles of $G$ such that $M=M(G,\Bb,\Ll,\Ff)$ is connected. 
\begin{enumerate} 
\item Either $G$ is connected or $M$ is lifted-graphic. 
\item If $M$ is lifted-graphic, then $M$ has a connected graph obtained by successively identifying pairs of vertices in distinct components of $G$. 
\end{enumerate}
\end{corollary}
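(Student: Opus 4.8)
The plan is to deduce both parts from Theorem~\ref{thm:connected_QG_connected_G}, the degenerate-tripartition observations, and the rank formula of Lemma~\ref{lem:rank}.

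Part~(1) is immediate. If $G$ is not connected then, by the contrapositive of Theorem~\ref{thm:connected_QG_connected_G}(1), $\Ff$ is degenerate; hence by the degenerate-tripartition observation $M = M(G,\Bb,\Ll,\Ff) = L(G,\Bb)$, so $M$ is lifted-graphic.

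For part~(2) I would first dispose of the trivial case in which $G$ is already connected (take the empty sequence of identifications), so assume $G$ has $c \geq 2$ components. As in part~(1), $\Ff$ is degenerate, so $M = L(G,\Bb) = M(G,\Bb,\Uu,\emptyset)$, where $\Uu$ is the set of all unbalanced cycles of $G$. The key step is the claim that, given distinct components $G_1$ and $G_2$ of $G$ and vertices $u \in V(G_1)$, $w \in V(G_2)$, the graph $G'$ obtained from $G$ by identifying $u$ and $w$ to a single vertex $v$ carries the proper tripartition $(\Bb,\Uu,\emptyset)$ and satisfies $M(G',\Bb,\Uu,\emptyset) = M$. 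To prove the claim I would first observe that no cycle of $G'$ uses edges on both sides of the cut vertex $v$, so the cycles of $G'$ coincide with those of $G$, with matching bias; in particular $\Bb$ still obeys the theta property in $G'$ (thetas, being $2$-connected, are unaffected), and $(\Bb,\Uu,\emptyset)$ is again a proper tripartition (the condition on $\Ff=\emptyset$ being vacuous). For the equality of matroids I would compare rank functions via Lemma~\ref{lem:rank}, whose ``otherwise'' case applies since $\Ff=\emptyset$: for $X \subseteq E(G)=E(G')$, the identification leaves $l(X)$ unchanged (the cycles are the same) and leaves $|V(X)|-c(X)$ unchanged — if $X$ contains an edge at $u$ and an edge at $w$ then the identification both merges two components of $G[X]$ and identifies two vertices of $V(X)$, so $c(X)$ and $|V(X)|$ each drop by exactly one, and otherwise neither changes. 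Hence $r$ is preserved and the claim follows. Applying the claim repeatedly, each time identifying a vertex of one component with a vertex of another, produces after $c-1$ steps a connected graph for $M$ obtained by successively identifying pairs of vertices in distinct components, as required.

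The only substantive point is the claim, and within it the short rank computation. An alternative to comparing ranks is to compare circuit collections directly through Zaslavsky's description of the circuits of a lift matroid; here the one thing to watch is that a bracelet of $G$ whose two cycles lie in $G_1$ and $G_2$ respectively may become a pair of tight handcuffs in $G'$, but since bracelets and tight handcuffs are both circuits of the lift matroid the collection of circuits is unchanged. Either route is short, so I do not anticipate a real obstacle.
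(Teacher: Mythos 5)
Your proof is correct and follows exactly the route the paper intends: part (1) is the contrapositive of Theorem~\ref{thm:connected_QG_connected_G}(1) combined with the degenerate-tripartition observation, and part (2) is the standard fact about lift matroids that vertex identifications across components preserve the matroid. The paper states this as a corollary with no written proof, so you are supplying the details; your rank computation via Lemma~\ref{lem:rank} is a clean way to verify the identification step, and your closing remark about the circuit-level alternative (bracelets becoming tight handcuffs, both already circuits of $L\GB$) is accurate and is in fact the more classical way one sees this stated in the biased-graph literature. One small clarification worth making explicit: after the first identification the new vertex $v$ is a cut vertex of $G'$, so both the ``cycles of $G'$ coincide with those of $G$'' claim and the preservation of the theta property follow because thetas are 2-connected and hence cannot straddle $v$; you gesture at this but stating it once covers both sub-claims at a stroke.
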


Thus every 
{\color{black} connected}
quasi-graphic matroid has a connected framework. 

Our next result says that quasi-graphic matroids are 
{in some sense, generally} 
3-connected: a connected quasi-graphic matroid that is neither lifted-graphic nor frame is either 3-connected or decomposes along 2-sums with graphic matroids. 

\begin{theorem} \label{lem:M2conn_G2conn}
Let $M$ be a connected matroid of the form $M(G, \Bb, \Ll, \Ff)$ with $(\Bb, \Ll, \Ff)$ a proper tripartition of $G$ such that neither $\Ll$ nor $\Ff$ is degenerate. 
If $M$ is not 3-connected, then $M$ is obtained via 2-sums of graphic matroids and a single 3-connected matroid of the form $M(H, \Bb', \Ll', \Ff')$, for some graph $H$ with proper tripartition $(\Bb',\Ll',\Ff')$. 
\end{theorem}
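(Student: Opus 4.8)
The plan is to induct on $|E(M)|$: at each stage we either recognise that $M$ is already $3$-connected, or we locate a $2$-vertex-cut of $G$ one of whose sides is a ``balanced pendant,'' peel that pendant off as a graphic $2$-sum component, and check that the remaining core is again a connected matroid of the required form, with both parts of its tripartition still non-degenerate. Throughout we may assume $G$ is $2$-connected, since that is precisely Theorem~\ref{thm:connected_QG_connected_G}(2); and the hypothesis persists through the induction, because a connected quasi-graphic matroid having some framework on which both parts of the tripartition are non-degenerate is neither frame nor lifted-graphic, and hence (by the remark on degenerate tripartitions, together with Theorems~\ref{thm:equivalences} and~\ref{thm:connected_QG_connected_G}(2)) has a $2$-connected framework on which both parts of the induced tripartition are again non-degenerate.

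The crux is to classify the $2$-separations of $M$ in terms of $G$. Suppose $(X,Y)$ is a $2$-separation of $M$ and set $p=|V(X)\cap V(Y)|$; since $G$ is connected we have $|V(X)|+|V(Y)|=|V(G)|+p$, and since $G$ is unbalanced (as $\Ff\neq\emptyset$) Lemma~\ref{lem:rank} gives $r(M)=|V(G)|$. From Lemma~\ref{lem:rank} one reads off that $r(Z)\le|V(Z)|$ for every $Z\subseteq E(G)$, with equality only when $G[Z]$ is connected and unbalanced, with $r(Z)=|V(Z)|-1$ when $G[Z]$ is connected and balanced, and with a disconnected $G[Z]$ lowering $r(Z)$ further in a manner that, in our situation, would exhibit a cut-vertex of $G$. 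Feeding these facts into $r(X)+r(Y)=r(M)+1$ forces $p\le 3$; discarding the configurations that produce a cut-vertex of $G$ (in particular $p\le 1$) leaves exactly two possibilities: $p=2$, with $\{u,v\}:=V(X)\cap V(Y)$ a $2$-vertex-cut of $G$ exactly one of whose sides is balanced; or $p=3$, with $G[X]$ and $G[Y]$ both balanced. In the second case, after switching so that every edge of $G[X]$ is positive, let $\mu(a),\mu(b),\mu(c)$ be the resulting signs at the three vertices of $V(X)\cap V(Y)$; a cycle of $G$ is unbalanced precisely when the (necessarily even) set of vertices at which it alternates between $G[X]$ and $G[Y]$ has $\mu$-product $-1$, and since unbalanced cycles exist this forces, after relabelling, $\mu(b)=\mu(c)\neq\mu(a)$, so that $a$ lies on every unbalanced cycle of $G$. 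But then every two cycles of $\Ll$ meet at $a$, contradicting the non-degeneracy of $\Ll$. Hence every $2$-separation of $M$ is displayed by a $2$-vertex-cut of $G$ with a balanced pendant side; in particular $M$ is $3$-connected whenever $G$ has no such cut.

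It remains to verify that peeling a balanced pendant does what is claimed. Given a $2$-vertex-cut $\{u,v\}$ of $G$ with balanced side $G_1$ and other side $G_2$, adjoin to each of $G_1,G_2$ a new balanced link $e$ joining $u$ and $v$. Since $G_1$ is balanced, any two paths from $u$ to $v$ in $G_1$ have the same sign, so the bias of each cycle of $G_2\cup e$ through $e$ is well defined, namely the common bias of all its completions through $G_1$; thus $(\Bb,\Ll,\Ff)$ restricts to a proper tripartition $(\Bb',\Ll',\Ff')$ of $G_2\cup e$, and a term-by-term comparison of the circuit descriptions of Section~2 shows $M(G,\Bb,\Ll,\Ff)=M(G_1\cup e)\oplus_2 M(G_2\cup e,\Bb',\Ll',\Ff')$, a $2$-sum along $e$ whose first factor $M(G_1\cup e)$ is graphic. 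Non-degeneracy passes to the core: $G_1$, being balanced, contains no unbalanced cycle, and two disjoint unbalanced cycles of $G$ cannot both meet $G_1$ without sharing $u$ or $v$, so the disjoint pairs witnessing non-degeneracy of $\Ll$ and of $\Ff$ survive in $G_2\cup e$, routed through $e$ where necessary. Applying the inductive hypothesis to $M(G_2\cup e,\Bb',\Ll',\Ff')$, which is connected, smaller, and of the required form, and then recombining the $2$-sums, exhibits $M$ as obtained by $2$-sums of a single $3$-connected matroid of the required form with graphic matroids.

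The main obstacle is the $2$-separation classification of the second paragraph: the rank bookkeeping needed to pin $p$ down to $2$ with a balanced pendant, and in particular the disposal of the $p=3$ case, which is exactly where the non-degeneracy hypothesis is indispensable. By contrast, the inductive skeleton, the persistence of $2$-connectedness and of non-degeneracy, and the identification of a balanced pendant with a graphic $2$-sum component are all routine once the circuit and rank descriptions already established are in hand.
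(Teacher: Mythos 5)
Your broad strategy (classify $2$-separations of $M$ graphically, then peel off balanced $2$-sum pendants by induction) is the same shape as the paper's, but the $2$-separation classification in your second paragraph has a genuine gap. You assert that $r(Z)=|V(Z)|$ forces $G[Z]$ connected, and that a disconnected $G[Z]$ ``would exhibit a cut-vertex of $G$,'' but neither is true: by Lemma~\ref{lem:rank}, $r(Z)=|V(Z)|$ holds whenever $G[Z]$ has no balanced component (e.g.\ a disjoint union of several unbalanced components each carrying an $\Ff$-cycle), and a $2$-separation $(X,Y)$ of $M$ can perfectly well have $G[X]$ or $G[Y]$ disconnected even when $G$ is $2$-connected --- for instance $G[X]$ having two balanced components each meeting $V(X)\cap V(Y)$ in two vertices, with $|V(X)\cap V(Y)|=4$. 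Your dichotomy ``$p=2$ with a balanced pendant, or $p=3$ with both sides balanced'' therefore does not cover all $2$-separations, and the inductive step cannot get started on the ones it misses. The paper handles exactly this difficulty: it picks a $2$-separation minimising the total number of components of the two induced subgraphs, proves a Claim that no balanced component of either side can meet the boundary $S$ in precisely $2$ vertices (the proof of that Claim is where the peeling happens, via the minimal-counterexample hypothesis), and only then does the rank bookkeeping pin down $|S|\le 3$ with both sides balanced.

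A secondary problem is the disposal of your $p=3$ case. The argument ``after switching so that every edge of $G[X]$ is positive\dots'' presupposes that $(G,\Bb)$ is a \emph{signed} graph, but a biased graph in general carries no sign (or gain) function, so switching is not available; you would need to first establish, from the theta property and the fact that $G[X]$ and $G[Y]$ are both balanced, that this particular biased graph admits a signing, and you do not do so. The conclusion you want there is in any case stronger than needed and reachable more cheaply: with $|S|\le 3$ and both sides balanced, every cycle of $\Ff$ must use edges on both sides and hence pass through at least two of the three vertices of $S$, so any two $\Ff$-cycles share a vertex of $S$ by pigeonhole, contradicting the non-degeneracy of $\Ff$. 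That is the paper's route and it avoids signing entirely. The peeling step in your third paragraph is fine, and the persistence of $2$-connectedness and non-degeneracy through the induction is handled adequately; the missing piece is a correct treatment of $2$-separations with disconnected sides, which is the real work here.
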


To prove Theorem \ref{lem:M2conn_G2conn}, we use the notion of a \emph{link-sum}. 
This is a 2-sum of graphs, together with an appropriate tripartition of its cycles, which provides a representation of the 2-sum of a quasi-graphic matroid and a graphic matroid.  
Let $G$ and $H$ be graphs and let $(\Bb,\Ll,\Ff)$ be a proper tripartition of the cycles of $G$. 
Assume that $E(G) \cap E(H) = \{e\}$ and that $e$ is a link in both $G$ and $H$. 
The 2-sum of the matroid $M(G,\Bb,\Ll,\Ff)$ and the cycle matroid $M(H)$ of $H$ on basepoint $e$ may be realised in the graphs as follows.
Let $u_1,v_1$ be the endpoints of $e$ in $G$ and let $u_2, v_2$ be the endpoints of $e$ in $H$. 
The \emph{link-sum} of $G$ and $H$ \emph{on} $e$ is the graph, denoted $G \oplus_2^e H$, obtained by identifying $u_1$ with $u_2$ and identifying $v_1$ with $v_2$, then deleting $e$, together with the following tripartition of its cycles. 
Let $\Bb'$ be the union of $\Bb$ and the set of all cycles of $H$ and the set 
\begin{multline*}
\{ P \cup Q : P \text{ is a $u_1$-$v_1$ path in $G$ with $P \cup e \in \Bb$} \\ \text{and $Q$ is a $u_2$-$v_2$ path in $H$} \}. 
\end{multline*}
Let $\Ll'$ be the union of $\Ll$ and the set 
\begin{multline*}
\{ P \cup Q : P \text{ is a $u_1$-$v_1$ path in $G$ with $P \cup e \in \Ll$} \\ \text{and $Q$ is a $u_2$-$v_2$ path in $H$} \} 
\end{multline*}
and let $\Ff'$ be the union of $\Ff$ and the set 
\begin{multline*}
\{ P \cup Q : P \text{ is a $u_1$-$v_1$ path in $G$ with $P \cup e \in \Ff$} \\ \text{and $Q$ is a $u_2$-$v_2$ path in $H$} \}.
\end{multline*}
It is straightforward to check that, regardless of the choices made for labelling the endpoints $u_1, v_1$ and $u_2, v_2$ of $e$, $(\Bb', \Ll', \Ff')$ is a proper tripartition of the cycles of $G \oplus_2^e H$, and that the 2-sum of $M(G,\Bb,\Ll,\Ff)$ and $M(H)$ on $e$ is equal to $M(G \oplus_2^e H, \Bb', \Ll', \Ff')$. 

\begin{proof}[Proof of Theorem \ref{lem:M2conn_G2conn}]
Suppose not for a contradiction, and let $M$ be a counterexample whose set of elements is minimal. In particular, there can be no 2-separation of $G$ which corresponds to a link-sum with a balanced biased graph. 
By Theorem \ref{thm:connected_QG_connected_G} $G$ is 2-connected. 
Since $M$ is connected but not 3-connected, it must have a 2-separation $(A, B)$. Let $c_A$ be the number of components of $G[A]$ and $c_B$ the number of components of $G[B]$. 
Choose $(A, B)$ so as to minimise $c_A + c_B$. Let $S$ be the set of vertices incident to edges in both $A$ and $B$. Since $G$ is 2-connected, no component of $A$ or $B$ can meet $S$ in fewer than 2 vertices.

\begin{claim}
There is no balanced component of $A$ or $B$ having precisely 2 vertices in $S$.
\end{claim}

\begin{proof}[Proof of Claim]
Suppose for a contradiction that there is such a component $X$, without loss of generality a component of $A$. 
Then $X$ consists of a single edge $x$ (else $M$ would be a 2-sum of a quasi-graphic matroid and a graphic matroid, given by the link-sum corresponding to the 2-separation $(X, E(G) \setminus X)$). 
But now $r(A \setminus \{x\}) = r(A) - 1$ and $r(B \cup \{x\}) \leq r(B) + 1$, so that the order of $(A \setminus \{x\}, B \cup \{x\})$ is at most that of $(A, B)$. Since the order of $(A, B)$ is at most 1 and that of $(A \setminus \{x\}, B \cup \{x\})$ is at least 1, we must have $r(B \cup \{x\}) = r(B) + 1$ and the order of $(A \setminus \{x\}, B \cup \{x\})$ is precisely 1. 

Since we chose $(A, B)$ to minimise $c_A + c_B$, $(A \setminus \{x\}, B \cup \{x\})$ cannot be a 2-separation and so $|A| = 2$. 
Since $M$ is connected and neither $\Ll$ nor $\Ff$ is degenerate, $G$ has no loops. Thus both elements of $A$ are links, so $|S|=4$. 
Since $r(B \cup \{x\}) = r(B) + 1$, $B$ is not spanning and so $A$ is codependent. Since $M$ is connected, $A$ must be a cocircuit, so its two elements are in series in $M$. But then $M$ is a 2-sum of $M/x$ with $M(K_3)$. $M/x$ is connected since $M$ is. By Proposition \ref{prop:G2conn_minors} $M/x = M((G, \Bb, \Ll, \Ff)/x)$. Our next aim is to show that neither $\Ll/x$ nor $\Ff/x$ is degenerate. 

There are two cases. 
The first is that at least two components of $B$ meet $X$. In this case, since each of these components meets $S$ in at least 2 vertices and $S$ has only 4 vertices, these two are the only components of $B$. Let $v$ be an endpoint of $x$. Since $\Ff$ is non-degenerate, there is some $C \in \Ff$ not containing $v$. Thus $C$ must be contained in some component $Y_1$ of $B$. Let $Y_2$ be the other component. $Y_2$ cannot also contain a cycle in $\Ff$, since $r(B) = r(M) - 1 = |V(B)| - 1$. But $Y_2$ cannot contain a cycle in $\Ll$ either, since $(\Bb, \Ll, \Ff)$ is proper. So $Y_2$ is balanced, and has precisely 2 vertices in $S$, so as above it consists of a single edge $y$. But then $x$ and $y$ are in series in $G$, and so neither $\Ll/x$ nor $\Ff/x$ can be degenerate.

The second case is that only one component $Y$ of $B$ meets $X$. Then $Y$ must contain at least one further vertex of $S$, and since $S$ has only 4 vertices $Y$ must contain $S$ and so in fact $B$ must be connected. In this case, since $r(B) = r(M) - 1 = |V(B)| - 1$ we must have that $B$ is balanced. Since $\Ff$ is not degenerate, there are disjoint cycles $C_1$ and $C_2$ in $\Ff$. Then $C_1$ and $C_2$ must meet $A$ in different edges, so one of them, say $C_1$, must contain $x$. Then $C_1/x$ and $C_2$ are disjoint cycles in $\Ff/x$, which is therefore also non-degenerate. Similarly $\Ll/x$ is non-degenerate.

By the minimality of $M$, $M/x$ can be obtained via 2-sums of graphic matroids and a single 3-connected matroid as in the statement of the theorem. 
Hence so can $M$, giving the desired contradiction. 
\end{proof}

We use $r_G$ to denote the rank function of $M(G)$ ($r$ is used for the rank function of $M$). We define $d$ to be $(r(A) - r_G(A)) + (r(B) - r_G(B))$. Then the equation for the order of $(A,B)$ tells us that $1 = r(A) + r(B) - r(E(G)) = (r_G(A) + r_G(B) - r_G(E(G))) - 1 + d \geq d$, since $G$ is 2-connected. So one of $r(A) - r_G(A)$ or $r(B) - r_G(B)$, without loss of generality the second, must be zero. Thus $B$ is balanced. Since every component of $A$ meets $S$ in at least 2 vertices we have $c_A \leq |S|/2$. Since by the claim every component of $B$ meets $S$ in at least 3 vertices we have $c_B \leq |S|/3$. 

Using the above formula for the order of $(A,B)$ once more, we have $1 = (r_G(A) + r_G(B) - r_G(E(G))) - 1 + d = |V(A)| - c_A + |V(B)| - c_B - |V(G)| + d = |S| - c_A - c_B + d \geq |S| - |S|/2 - |S|/3 + d = |S|/6 + d > d$. So $d = 0$ and thus $A$ is also balanced and by the above claim we have $c_A \leq |S| / 3$. Substituting this into the above calculation we have $1 = |S| - c_A  - c_B \geq |S| - |S|/3 - |S|/3 = |S|/3$, so $S$ contains at most 3 vertices. 
Since both $A$ and $B$ are balanced, every cycle in $\Ff$ must meet both $A$ and $B$ and so must contain at least 2 of these 3 vertices, contradicting the assumption that $\Ff$ is non-degenerate. 
\end{proof}

\section{Bases, independent sets, cocircuits} 

\subsection{Independent sets and bases} 
Recall that for a graph $G$ and a proper tripartition $(\Bb,\Ll,\Ff)$ of its cycles, the rank function of the matroid $M(G,\Bb,\Ll,\Ff)$ in terms of induced subgraphs is given by Lemma \ref{lem:rank}. 
Here we observe that this function may be expressed in terms of the rank functions of the lift matroid $L\GB$ and frame matroid $F\GB$. 
Let $\GB$ be a biased graph and let $X \subseteq E(G)$. 
The rank $r_L(X)$  of $X$ in the lift matroid $L\GB$ is given by \[r_L(X) = |V(X)| - c(X) + l(X)\] where $c(X)$ is the number of components of $G[X]$, and $l(X) = 0$ if $G[X]$ is balanced and $l(X)=1$ otherwise.
The rank $r_F(X)$ of $X$ in the frame matroid $F\GB$ is given by \[r_F(X) = |V(X)| - b(X)\] where $b(X)$ is the number of balanced components of $G[X]$.
Thus by Lemma \ref{lem:rank} the rank $r(X)$ of $X$ in $M(G,\Bb,\Ll,\Ff)$ is given by 
\begin{equation} \label{eqn:rankintermsofliftandframe}
r(X) = \begin{cases} r_F(X) &\text{ if } G[X] \text{ contains a cycle in } \Ff \\ r_L(X) &\text{ otherwise}. \end{cases}
\end{equation}

Thus $X \subseteq E(G)$ is independent in $M(G,\Bb,\Ll,\Ff)$ if $G[X]$ contains a cycle in $\Ff$ and is independent in $F\GB$ or if $X$ does not contain a cycle in $\Ff$ and is independent in $L\GB$. 
An explicit description is given in the following theorem. 

\begin{theorem} \label{P:IndependentSets}
Let $X \subseteq E(G)$. 
$X$ is independent in $M(G,\Bb,\Ll,\Ff)$ if and only if $X$ satisfies one of the following. 
\begin{enumerate}
\item $G[X]$ is a forest 
\item $G[X]$ contains just one cycle, which is in $\Ll$. 
\item  Each component of $G[X]$ contains at most one cycle and all cycles of $G[X]$ are in $\Ff$. 
\end{enumerate}
\end{theorem}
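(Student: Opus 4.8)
The plan is to verify Theorem~\ref{P:IndependentSets} directly from the rank characterization in equation~\eqref{eqn:rankintermsofliftandframe}, by translating ``$X$ independent'' into the stated structural conditions on $G[X]$. Recall that $X$ is independent in $M(G,\Bb,\Ll,\Ff)$ exactly when either (a) $G[X]$ contains a cycle in $\Ff$ and $X$ is independent in $F\GB$, or (b) $G[X]$ contains no cycle in $\Ff$ and $X$ is independent in $L\GB$. I would use the standard facts about lift and frame matroids recorded just above: $X$ is independent in $F\GB$ iff every component of $G[X]$ has cyclomatic number at most one with its unique cycle (if any) unbalanced; and $X$ is independent in $L\GB$ iff $G[X]$ has cyclomatic number at most one, with its unique cycle (if any) unbalanced.

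First I would handle the direction ``if $X$ satisfies (1), (2), or (3), then $X$ is independent.'' If $G[X]$ is a forest (case 1), then $X$ contains no cycle at all, hence no cycle in $\Ff$, and $X$ is independent in $L\GB$ since its cyclomatic number is $0$; so by (b), $X$ is independent. If $G[X]$ has exactly one cycle and it lies in $\Ll$ (case 2), then again $G[X]$ contains no $\Ff$-cycle, and $G[X]$ has cyclomatic number one with its unique cycle unbalanced (being in $\Ll\subseteq\Uu$), so $X$ is independent in $L\GB$, and again (b) applies. If every component of $G[X]$ has at most one cycle and all cycles of $G[X]$ lie in $\Ff$ (case 3): if there is at least one cycle, then $G[X]$ contains an $\Ff$-cycle and every component of $G[X]$ is a forest-plus-at-most-one-edge whose unique cycle (when present) is in $\Ff\subseteq\Uu$ hence unbalanced, so $X$ is independent in $F\GB$ and (a) applies; if there is no cycle we are back in case 1.

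Next, the converse: suppose $X$ is independent in $M(G,\Bb,\Ll,\Ff)$; I show one of (1)--(3) holds. Split on whether $G[X]$ contains a cycle in $\Ff$. If it does, then by (a) $X$ is independent in $F\GB$, so each component of $G[X]$ has cyclomatic number $\leq 1$; moreover each such unique cycle is unbalanced, hence lies in $\Uu=\Ll\cup\Ff$. But if some component's cycle were in $\Ll$, then, $X$ containing also a cycle of $\Ff$ in a \emph{different} component (the two cannot share a vertex, being in different components), we would have a bracelet with one cycle in $\Ll$ and the other in $\Ff$, contradicting that $(\Bb,\Ll,\Ff)$ is proper; so every cycle of $G[X]$ is in $\Ff$, which is case~(3). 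If $G[X]$ contains no cycle in $\Ff$, then by (b) $X$ is independent in $L\GB$, so $G[X]$ has cyclomatic number $\leq 1$ overall. If the cyclomatic number is $0$ then $G[X]$ is a forest, case~(1); if it is $1$, the unique cycle is unbalanced (since $X$ independent in $L\GB$), hence in $\Uu$, but it is not in $\Ff$ by hypothesis, so it is in $\Ll$, case~(2).

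The only genuinely nontrivial point is the step in the converse where $G[X]$ contains an $\Ff$-cycle and I must rule out a second cycle lying in $\Ll$; this is exactly where properness of the tripartition is used, via the observation that two cycles in distinct components of $G[X]$ are vertex-disjoint and therefore form a bracelet. Everything else is a direct unwinding of the rank formulas for $L\GB$ and $F\GB$ together with the case analysis built into~\eqref{eqn:rankintermsofliftandframe}, so I expect the write-up to be short.
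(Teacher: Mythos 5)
Your proposal is correct and takes the same route as the paper: the paper offers no separate proof for this theorem, instead presenting it as a direct consequence of the rank formula in equation~\eqref{eqn:rankintermsofliftandframe} (derived from Lemma~\ref{lem:rank}), and your write-up simply fills in the case analysis the paper leaves implicit. The one step you single out as ``genuinely nontrivial''---using properness to rule out an $\Ll$-cycle coexisting with an $\Ff$-cycle in a frame-independent set---is handled correctly: independence in $F\GB$ forces each component of $G[X]$ to carry at most one cycle, so an $\Ll$-cycle and an $\Ff$-cycle would necessarily sit in distinct components and hence form a forbidden bracelet.
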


By Theorem \ref{P:IndependentSets} the bases of $M(G,\Bb,\Ll,\Ff)$ are edge sets of 
\begin{itemize} 
\item maximal subgraphs containing a single cycle that is in $\Ll$, and 
\item maximal subgraphs each of whose components is either a tree or contains a single cycle that is in $\Ff$. 
\end{itemize} 

\subsection{Cocircuits and a unique representation theorem}

We now determine the graphical structure of cocircuits and their complementary hyperplanes in quasi-graphic matroids. Since the structure of cocircuits in frame matroids and lifted-graphic matroids is understood, let us consider a proper tripartition $(\mc B,\mc L,\mc F)$ of the cycles of a graph $G$ 
with neither $\Ll$ nor $\Ff$ degenerate. 

For a biased graph $\GB$, a set of edges $X \subseteq E(G)$ is a \emph{balancing set} if $\Bb$ does not contain all cycles of $G$ but $\Bb$ contains all cycles of $G-X$. 
The rank function given in equation (\ref{eqn:rankintermsofliftandframe}), along with the fact that if neither $\Ll$ nor $\Ff$ is degenerate then $G$ has precisely one unbalanced component, immediately implies that 
the cocircuits of $M(G,\Bb,\Ll,\Ff)$ are precisely those edge sets $X$ minimal with respect to the property that either 
\begin{itemize} 
\item $X$ is a balancing set of $G$, 
\item $G-X$ has precisely one more balanced component than $G$ and contains a cycle in $\Ff$, or 
\item $G-X$ has precisely one more component than $G$ and a cycle in $\Ll$, but no cycle in $\Ff$.  
\end{itemize}
With only a little effort we obtain the following more precise description of the cocircuits of $M(G,\Bb,\Ll,\Ff)$. 
A depiction of the cocircuits described in the following theorem 
is given in Figure \ref{F:cocircuits}.

\begin{theorem} \label{thm:cocircuits} 
Let $G$ be a graph and let $(\Bb,\Ll,\Ff)$ be a proper tripartition of the cycles of $G$ with neither $\Ll$ nor $\Ff$ degenerate. 
The cocircuits of $M(G,\Bb,\Ll,\Ff)$ are precisely those edge sets $C$ satisfying one of the following.  
\begin{enumerate} 
\item $C$ is a minimal balancing set of $(G,\mc B)$. 
\item $C$ is a bond which separates $G$ into connected subgraphs $X$ and $Y$ for which $X$ is balanced and $Y$ is unbalanced.
\item $C$ is a bond which separates $G$ into connected subgraphs $X$ and $Y$ which are both unbalanced and every unbalanced cycle in $G-C$ is in $\mc L$.
\item $C$ is a disjoint union $K \cup B$ for which $G-K$ has connected components $X,Y_1,\ldots, Y_m$ with each edge in $K$ having exactly one endpoint in $X$, each $Y_i$ is unbalanced with each unbalanced cycle in $\mc F$, and $B$ is a minimal balancing set (possibly empty) of $X$.
\end{enumerate}
\end{theorem}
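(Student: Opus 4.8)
The plan is to derive the explicit description of cocircuits from the three-item characterization stated just before the theorem, by analyzing each type and translating it into the graphical language of bonds and balancing sets. The starting point is the observation already recorded in the excerpt: when neither $\Ll$ nor $\Ff$ is degenerate, $G$ has a unique unbalanced component, and equation (\ref{eqn:rankintermsofliftandframe}) tells us that cocircuits are the minimal edge sets $X$ such that $G-X$ either (a) has all cycles in $\Bb$ (balancing set), or (b) gains exactly one balanced component and still contains an $\Ff$-cycle, or (c) gains exactly one component and contains an $\Ll$-cycle but no $\Ff$-cycle. So the whole proof is a case analysis refining (a), (b), (c) into types (1), (2), (3), (4).

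First I would dispose of case (a): a minimal balancing set is literally item (1), and minimality is inherited, so nothing is needed there. The bulk of the work is cases (b) and (c). For case (b), $G-X$ has one extra balanced component and still contains an $\Ff$-cycle (hence, by uniqueness of the unbalanced component, that $\Ff$-cycle lies in the one remaining unbalanced piece). The ``one extra balanced component'' forces $X$ to contain a bond $K$ separating off some balanced piece (or several); minimality together with the requirement that an $\Ff$-cycle survive in what remains will be used to pin down that $X = K \cup B$ where $B$ is a minimal balancing set of the non-separated part $X$ (using the name $X$ for the subgraph, as in item (4)). I would split according to whether the unbalanced remaining component itself needs further balancing: if it does, $B$ is a genuine nonempty balancing set, and if it doesn't, $B = \emptyset$ and one is in a sub-case of item (4) (or item (2) when only one $Y_i$ appears and no balancing is needed). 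Here I need to check carefully that the $Y_i$'s are each unbalanced with every unbalanced cycle in $\Ff$ — this follows because if some $Y_i$ contained an $\Ll$-cycle, properness of $(\Bb,\Ll,\Ff)$ would be violated against the $\Ff$-cycle surviving elsewhere (the two would be vertex-disjoint), and if some $Y_i$ were balanced we could enlarge the separated balanced part, contradicting minimality of the chosen separation.

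For case (c), $G-X$ gains exactly one component, contains an $\Ll$-cycle, and contains no $\Ff$-cycle. ``Exactly one extra component'' means $X$ contains a bond; the question is whether both sides of that bond are unbalanced or one is balanced. If one side is balanced and the other unbalanced, we get item (2), and I must verify there is no further balancing needed (if the balanced side were not all of one piece, minimality would be violated) and that the $\Ll$-cycle condition is automatic — in fact (2) as stated does not even mention $\Ll$, so I should double-check that a bond separating a balanced $X$ from an unbalanced $Y$ is always a cocircuit regardless, which it is, because $G-C$ then has an extra balanced component and either contains an $\Ff$-cycle (case (b) with $m=1$, $B=\emptyset$) or does not (case (c)); either way minimality of the bond gives a cocircuit. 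If both sides of the bond are unbalanced, then since $G$ has only one unbalanced component before deletion, the deletion must have split that component, and no $\Ff$-cycle survives means every surviving unbalanced cycle is in $\Ll$; this is item (3). Conversely I must check each of (1)--(4) really is minimal with the stated rank-jump property, which is a short verification using Lemma \ref{lem:rank} again.

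The main obstacle I anticipate is the bookkeeping in case (b)/item (4): showing that a minimal $X$ of that type decomposes cleanly as a disjoint union $K \cup B$ with $K$ a ``star-like'' bond (all edges with exactly one endpoint in the distinguished component $X$) and $B$ a minimal balancing set confined to $X$, rather than $X$ being some tangled set that both separates and balances simultaneously in an interleaved way. The key lemma I would want is that balancing and separating can be done independently: deleting $B$ from within a component to balance it never changes which components exist, so a minimal set achieving ``one extra balanced component plus survival of an $\Ff$-cycle'' must delete a bond $K$ first (minimal for the separation) and then a minimal balancing set $B$ inside what is left, with $K$ and $B$ edge-disjoint because $B \subseteq E(X)$ and $K \cap E(X) = \emptyset$. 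Proving $K$ has the ``exactly one endpoint in $X$'' property amounts to showing each $Y_i$ is a full component of $G-K$ (not further subdividable), which is exactly minimality of $K$ for producing the extra balanced component $X$. Once this separation-versus-balancing independence is nailed down, items (1)--(4) fall out by matching each of (a), (b), (c) to the listed graphical forms, and the converse direction is the routine check with Lemma \ref{lem:rank}.
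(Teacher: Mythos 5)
Your overall strategy --- deriving the four cocircuit types from the three-bullet characterization via the rank formula of Lemma \ref{lem:rank}, then using minimality to pin down the shape of $C$ --- matches the paper's proof closely. However, the step where you argue that every $Y_i$ has all its unbalanced cycles in $\Ff$ has a gap. You claim that an $\Ll$-cycle in some $Y_i$ would contradict properness ``against the $\Ff$-cycle surviving elsewhere,'' but this only works when the $\Ff$-cycle lies in a \emph{different} component from the $\Ll$-cycle. When $m=1$, the single component $Y_1$ can carry both an $\Ll$-cycle and an $\Ff$-cycle that meet one another, and properness alone gives no contradiction. The correct conclusion in that subcase is not that $Y_1$ is all-$\Ff$, but rather that minimality forces $B=\emptyset$, placing $C$ in item (2) rather than item (4): if $B$ were nonempty, then adding back an edge $e \in B$ would (by minimality) have to unbalance the balanced side, but any new unbalanced cycle through $e$ is vertex-disjoint from both the $\Ll$-cycle and the $\Ff$-cycle of $Y_1$, so by properness it could belong to neither class --- a contradiction.

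For $m \geq 2$ your properness argument can be repaired, but it needs the preliminary observation that no two \emph{distinct} components of $G-C$ can carry cycles of opposite classes; from this it follows that if some $Y_i$ carried both an $\Ll$-cycle and an $\Ff$-cycle, every other $Y_j$ would be barred from carrying a cycle of either class, contradicting its unbalancedness. So all the $Y_i$'s carry cycles of a single common class, and the rank formula then rules out the all-$\Ll$ option, since that would give rank $|V(G)| - (m+1) + 1 < |V(G)|-1$. This is the route the paper takes (phrased tersely as ``either all unbalanced cycles remaining in $G-C$ are in $\Ll$ or all are in $\Ff$; the rank function implies the latter''). Your ``key lemma'' about separation and balancing being independent is the right intuition but is not needed as a standalone statement: the clean $K \cup B$ decomposition with $B$ confined to the balanced side falls out of the same minimality-plus-rank-formula calculations. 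Once these two points are patched, your argument and the paper's essentially coincide.
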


A \emph{star} in a graph is the set of edges that are not loops incident to a vertex. 
The star whose edges are incident to the vertex $v$ is the star \emph{at} $v$. 
A vertex $v$ is a \emph{balancing vertex} if its set of incident edges is a balancing set. 
Let us call a proper tripartition $(\Bb,\Ll,\Ff)$ of the cycles of a graph $G$ \emph{degenerate} if one of $\Ll$ or $\Ff$ is degenerate. 
Observe that if $G$ has a balancing vertex, then necessarily $(\Bb,\Ll,\Ff)$ is degenerate. 

\begin{proposition} \label{prop:star_cocircuits}
Let $G$ be a graph with non-degenerate proper tripartition $(\Bb,\Ll,\Ff)$ such that $M(G,\mc B,\mc L,\mc F)$ is connected. Then for each vertex $v$ of $G$, the star at $v$ is a cocircuit of $M(G,\Bb,\Ll,\Ff)$. 
\end{proposition}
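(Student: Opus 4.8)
The plan is to use the cocircuit classification of Theorem~\ref{thm:cocircuits} together with the fact (established just before Proposition~\ref{prop:star_cocircuits}) that a graph with a non-degenerate proper tripartition has no balancing vertex. Fix a vertex $v$ and let $S$ be the star at $v$. First I would verify that $S$ is a cocircuit of $M = M(G,\Bb,\Ll,\Ff)$ directly from the ``minimal edge sets'' characterization preceding Theorem~\ref{thm:cocircuits}: deleting $S$ isolates $v$, which either increases the number of (balanced) components by one or destroys all unbalanced cycles through $v$. So $S$ contains a cocircuit; the work is to show $S$ itself is minimal with respect to the relevant property, i.e.\ that no proper subset of $S$ is already a cocircuit. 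Equivalently, I must rule out that some edge $e \in S$ lies in the closure of $E(G) \setminus S$ in $M$ — but since $M$ is connected and loopless-at-the-level-of-$v$ arguments aside, the cleaner route is to appeal to the explicit list in Theorem~\ref{thm:cocircuits} and argue that a star at $v$, being a minimal such set, must be one of the four listed types.

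Concretely, the key steps in order: (i) Note $G$ is $2$-connected (or at least has the relevant connectivity) by Theorem~\ref{thm:connected_QG_connected_G}, since $M$ is connected with both $\Ll$ and $\Ff$ non-degenerate; in particular $G$ is connected with a single unbalanced component. (ii) Observe that a balancing set cannot be a single star unless $v$ is a balancing vertex, which is impossible here; so $S$ is not of type~(1) unless it is actually a larger cocircuit — this shows any cocircuit contained in $S$ is of type~(2), (3), or (4). (iii) In all of types~(2)--(4) the cocircuit is (up to the balancing-set part $B$ in type~(4)) a bond, i.e.\ the edge set of an edge cut $\delta(W)$ for some vertex subset $W$. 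A bond contained in the star $\delta(\{v\})$ must equal $\delta(\{v\})$, because $\delta(W) \subseteq \delta(\{v\})$ forces $W = \{v\}$ or $W = V(G)\setminus\{v\}$ when $G$ is connected. (iv) For type~(4) one must additionally check that the balancing-set component $B$ is empty: after removing the bond $K = \delta(\{v\})$, the component $X$ containing the ``other side'' is all of $G - v$, and if $B \neq \emptyset$ then $v$ together with the edges of $B$ would witness a balancing structure incompatible with non-degeneracy (again using that $G$ has no balancing vertex); I would spell this out using that every cycle in $\Ff$ must meet every cycle in $\Ll$, forcing a shared vertex that can only be $v$, contradicting non-degeneracy.

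The main obstacle I anticipate is step~(iv) and, more generally, pinning down exactly which of the four cocircuit types a star realizes and confirming there is no ``extra baggage'' (the set $K$ in type~(4), or the requirement in type~(3) that all surviving unbalanced cycles lie in $\Ll$). The cleanest formulation is probably: show directly that no proper subset $S' \subsetneq S$ is a cocircuit, by exhibiting for each $S'$ a circuit of $M$ meeting $S'$ in exactly one element — take $e \in S \setminus S'$, and since $G-v$ still carries the (non-degenerate) unbalanced structure, build a handcuff or bracelet circuit through $e$ using a vertex-disjoint pair of unbalanced cycles or an unbalanced cycle off $v$ linked to $e$; such a circuit avoids the rest of $S$, so $S'$ cannot be a cocircuit. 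Combined with the observation that $S$ is co-dependent (from the rank computation of Lemma~\ref{lem:rank}), this gives that $S$ is a cocircuit. I would present whichever of these two routes — ``classify via Theorem~\ref{thm:cocircuits}'' versus ``exhibit a transversal circuit'' — turns out shorter; the transversal-circuit argument is likely the more self-contained.
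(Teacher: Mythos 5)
Your plan offers two routes; neither matches the paper's proof, and one of them does not work as stated.

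Route~(A), classifying the cocircuit via Theorem~\ref{thm:cocircuits}, can be made to work (and is not circular, since the paper proves Theorem~\ref{thm:cocircuits} without invoking this proposition), but it is more roundabout than necessary and some details are off. In type~(4) the set $K = \delta(V(X))$ may separate $G$ into more than two components, so calling it a bond is imprecise, though your observation that an edge cut $\delta(W)$ contained in $\delta(\{v\})$ forces $W=\{v\}$ in a $2$-connected graph applies to general cuts and is what you actually need. Once $K=\delta(\{v\})=S$ is forced, the balancing part $B$ vanishes for free: $B \subseteq C \subseteq S$ and $B \cap K=\emptyset$ give $B=\emptyset$, so the additional non-degeneracy argument you sketch for step~(iv) is unnecessary.

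Route~(B), which you say is likely the cleaner one, has a genuine gap. You propose to refute ``$S' \subsetneq S$ is a cocircuit'' by picking $e \in S \setminus S'$ and building a circuit $C$ with $e \in C$ that ``avoids the rest of $S$.'' No such circuit exists: by Lemma~\ref{lem:GGWquasicircuits}, every circuit of a quasi-graphic matroid induces a subgraph with minimum degree at least two at each of its vertices, so any circuit using an edge at $v$ uses at least two edges of the star at $v$. Thus every circuit meets $S$ in zero or at least two edges. (That is just cocircuit orthogonality working \emph{for} you, but it means the transversal circuit you want cannot exist. Even under the more plausible reading --- take $e \in S'$ and build $C$ with $C \cap S' = \{e\}$ while allowing $C$ to use edges of $S \setminus S'$ --- your construction ``link an unbalanced cycle off $v$ to $e$'' gives $v$ degree at least two in $C$ and leaves you needing to control which of those edges lie in $S'$; this is considerably more delicate than you suggest.)

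The paper's actual proof is the rank step of your route~(B) carried through directly, with no appeal to circuits or to Theorem~\ref{thm:cocircuits}. Since the tripartition is non-degenerate and $M$ is connected, $G$ is loopless, and by Theorem~\ref{thm:connected_QG_connected_G} $G$ is $2$-connected, so deleting the star at $v$ leaves the isolated vertex $v$ together with the connected graph $G-v$. Non-degeneracy means $G$ has no balancing vertex, so $G-v$ is unbalanced; by Lemma~\ref{lem:rank}, $r(E(G)\setminus S)=|V(G)|-1=r(M)-1$. Adding any $e\in S$ to $E(G)\setminus S$ reattaches $v$ to the connected unbalanced $G-v$, giving a connected unbalanced spanning subgraph of rank $|V(G)|=r(M)$; hence $E(G)\setminus S$ is closed and therefore a hyperplane, i.e.\ $S$ is a cocircuit. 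This is what you should write in place of the transversal-circuit step.
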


\begin{proof}
Since the tripartition is non-degenerate and $M(G,\Bb,\Ll,\Ff)$ is connected, $G$ is loopless. 
By Theorem \ref{thm:connected_QG_connected_G}, $G$ is 2-connected. 
Thus the set of edges $X$ incident to a vertex $v$ separates $G$ into exactly two components: the isolated vertex $v$ and $G-v$. 
Because $G$ has no balancing vertex, $G-v$ is unbalanced. 
Hence $E(G)-X$ has rank one less than the rank of $E(G)$ and is minimal with respect to this property. 
The result follows.  
\end{proof}

\begin{figure}
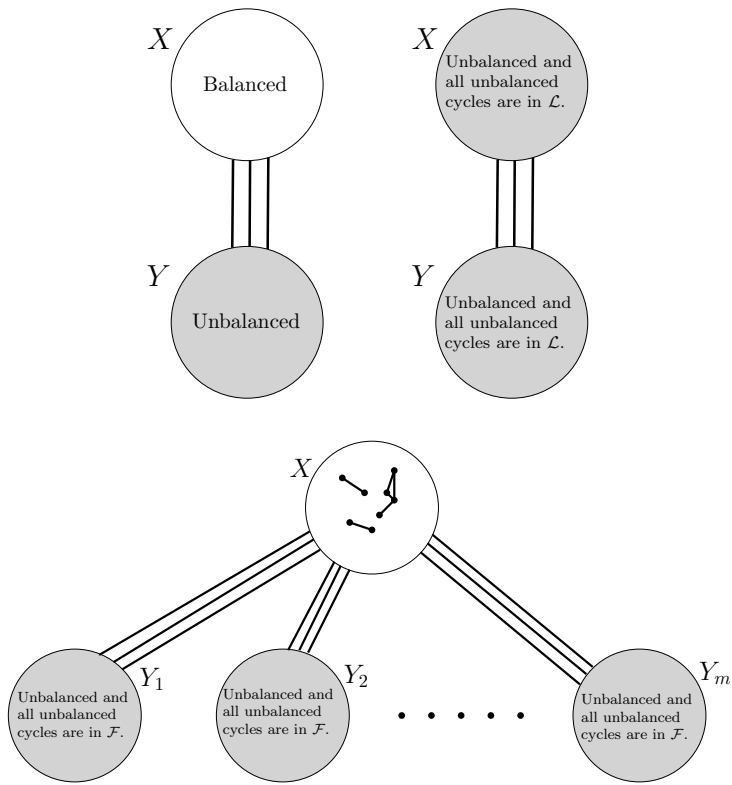

\begin{center}
\includegraphics[page=6,scale=.8]{QuasiGraphicMatroids_Figures}\hspace{1cm}
\includegraphics[page=7,scale=.8]{QuasiGraphicMatroids_Figures}

\vspace{.5cm}

\includegraphics[page=8,scale=.7]{QuasiGraphicMatroids_Figures}
\end{center}
\caption{Depictions of cocircuits from Parts (2), (3), and (4) of Theorem \ref{thm:cocircuits}.}
\label{F:cocircuits}
\end{figure}

\begin{proof}[Proof of Theorem \ref{thm:cocircuits}]
Let $M=M(G,\Bb,\Ll,\Ff)$. 
By Corollary \ref{thm:liftedgraphic_has_connected_graph} $G$ is connected, so the rank of $M$ is $|V(G)|$. 
Let $C$ be a subset of $E(G)$ of the form described in one of statements (1)-(4) in the theorem. 
Then $E(G)-C$ is closed and has rank $r(M)-1$, so $C$ is a cocircuit of $M$. 

For the converse, let $C$ be a cocircuit of $M$. 
Assume that $C$ does not contain a balancing set. 
Then $G-C$ contains an unbalanced cycle and has rank less than that of $M$, so the rank function implies that $G-C$ is disconnected. 
Let $X,Y_1,\ldots,Y_m$ be the components of $G-C$. 
If all of $X,Y_1,\ldots,Y_m$ are unbalanced, then all of the unbalanced cycles in $G-C$ are either in $\mc L$ or in $\mc F$. 
In the latter case, the rank of $G-C$ is still $|V(G)|$, a contradiction. 
Thus all of the unbalanced cycles in $G-C$ are in $\mc L$ and by minimality $C$ has the form described in statement (3). 
So now assume that not all of $X,Y_1,\ldots,Y_m$ are unbalanced; suppose $X$ is balanced. 
The rank function implies that $X$ is the only balanced component. 
Let $K$ be the subset of $C$ consisting of edges whose endpoints are in distinct components of $G-C$ and let $B$ be the subset of edges of $C$ consisting of edges with both endpoints in the same component of $G-C$. 
If $m=1$, then the minimality of $C$ implies that either $B=\emptyset$ and $C$ has the form of statement (2) or $B\neq\emptyset$ and $C$ the form of statement (4). 
So now assume $m \geq 2$. 
As before, either all unbalanced cycles remaining in $G-C$ are in $\Ll$ or all are in $\Ff$. 
The rank function implies that it must be the latter: all unbalanced cycles in $Y_1,\ldots,Y_m$ are in $\Ff$. 
Finally, the minimality of $C$ implies that each edge in $K$ has one endpoint in $X$ and that each edge in $B$ has both endpoints in $X$. That is, $C$ has the form described in statement (4).
\end{proof}

The characterisation of the cocircuits of $M(G,\Bb,\Ll,\Ff)$ given in Theorem \ref{thm:cocircuits} yields the following sufficient conditions for uniqueness of representation for quasi-graphic matroids. 

\begin{theorem} \label{thm:unique_representation}
Let $G$ be a 4-connected graph and let $(\Bb,\Ll,\Ff)$ be a proper tripartition of the cycles of $G$. 
Assume that for each vertex $v$ of $G$ the tripartition of the cycles of $G-v$ induced by $(\Bb,\Ll,\Ff)$ is non-degenerate. 
Then $G$ is the unique framework for $M(G,\Bb,\Ll,\Ff)$. 
\end{theorem}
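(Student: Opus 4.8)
The plan is to reconstruct the graph $G$ from the matroid $M = M(G,\Bb,\Ll,\Ff)$ together with its quasi-graphic structure, using the cocircuit characterisation of Theorem~\ref{thm:cocircuits}. First I would observe that under the hypotheses, for every vertex $v$ the induced tripartition on $G-v$ is non-degenerate, so in particular the tripartition on $G$ itself is non-degenerate; hence by Theorem~\ref{thm:connected_QG_connected_G} any framework is $2$-connected, and by Proposition~\ref{prop:star_cocircuits} the star at each vertex of $G$ is a cocircuit of $M$. The goal is then to show that these vertex-stars are forced, i.e.\ any other framework $G'$ has the same vertex-star cocircuits and the same incidences, so $G' \iso G$ as a graph (and the tripartitions then agree because they are determined by which cycles are circuits, and which bracelets are independent, in $M$).

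The key steps, in order, would be: (i) Fix an arbitrary framework $G'$ for $M$; by Theorem~\ref{thm:equivalences} there is a proper tripartition $(\Bb',\Ll',\Ff')$ with $M = M(G',\Bb',\Ll',\Ff')$. Show that $(\Bb',\Ll',\Ff')$ must also be non-degenerate: if $\Ll'$ or $\Ff'$ were degenerate, then $M$ would be lifted-graphic or frame, but the hypothesis (non-degeneracy of every $G-v$) rules this out, because a frame or lifted-graphic $M$ would force degeneracy already on some vertex-deleted minor of $G$. (ii) Using Theorem~\ref{thm:cocircuits}, analyse the cocircuits of $M$ that are stars: I would argue that in $G$, because $G$ is $4$-connected and every $G-v$ has a non-degenerate tripartition (so no $G-v$ has a balancing vertex, no small separations produce the degenerate configurations of parts (3)--(4)), the only cocircuits of the ``small'' shape are exactly the $|V(G)|$ vertex-stars — more precisely, a cocircuit is a vertex-star iff it is non-separating in the appropriate sense and $4$-connectivity prevents the balancing-set cocircuits of part~(1) and the bond-cocircuits of parts (2)--(4) from being minimal of star size. (iii) Conclude that the collection of vertex-stars of $G$ equals the collection of vertex-stars of $G'$, as families of subsets of $E(M)$. (iv) From matching vertex-star cocircuits, reconstruct the vertex set bijectively and the incidence structure: an edge $e$ lies in exactly two vertex-stars (its endpoints) unless it is a loop, and $4$-connectivity plus non-degeneracy rules out loops; so $G$ and $G'$ have the same edge–vertex incidences, hence are isomorphic graphs. (v) Finally, the tripartition is recovered from $M$: $\Bb$ is the set of cycles that are circuits, $\Ll$ consists of unbalanced cycles lying in some dependent bracelet, $\Ff$ the rest; since these are matroid-invariant (given the graph), $(\Bb',\Ll',\Ff') = (\Bb,\Ll,\Ff)$.

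The main obstacle I anticipate is step~(ii)/(iii): showing that the vertex-stars are \emph{recognisable} among all cocircuits purely from matroid data, or equivalently that no ``exotic'' cocircuit of type (1), (3), or (4) in $G'$ can masquerade as a vertex-star. Concretely, I expect the crux is a counting/connectivity argument of the flavour used in the proof of Theorem~\ref{lem:M2conn_G2conn}: in a $4$-connected $G$, a minimal balancing set or a bond of one of the degenerate types must have size strictly larger than a vertex-star (since every component on either side meets the separator in $\geq 4$ vertices), while the hypothesis that each $G-v$ is non-degenerate prevents the ``degenerate within a minor'' escape routes (a balancing vertex, or a cycle in $\Ff$ meeting only part of a separation) that would otherwise let a short cocircuit of type (3) or (4) appear. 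Making the size comparison tight enough to separate vertex-stars from every other cocircuit — uniformly over the unknown framework $G'$ — is where the real work lies; everything downstream is bookkeeping about incidences and the invariance of the tripartition.
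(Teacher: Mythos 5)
Your overall strategy — reconstruct the framework from star cocircuits and their incidences — is the same as the paper's, and you've correctly located the crux as step (ii)/(iii). But the tool you reach for there (size comparisons and ``non-separating in the appropriate sense'' with 4-connectivity giving lower bounds) is not the right one and would not close the gap. Minimal balancing-set cocircuits of type (1) in Theorem~\ref{thm:cocircuits} need not be large, so 4-connectivity gives no useful size floor; and ``non-separating'' (hyperplane connected) is too weak to rule out type~(3) cocircuits, which leave the hyperplane connected but 2-separated.

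The paper's key insight, which your outline is missing, is Lemma~\ref{lem:starcocircuits}: \emph{the complementary cocircuit of a 3-connected non-binary hyperplane is a star}, in any framework. This is proved directly from the case analysis of Theorem~\ref{thm:cocircuits} — type~(1) gives a graphic (hence binary) hyperplane, type~(2) with $E(X)\neq\emptyset$ and type~(4) give disconnected hyperplanes, and type~(3) gives a 2-separated hyperplane. The point is that ``3-connected'' and ``non-binary'' are intrinsic matroid properties, so the recognition transfers automatically from $G$ to an arbitrary framework $G'$. It remains to show that under your hypotheses every vertex-star of $G$ has a 3-connected non-binary complementary hyperplane: non-binarity follows from Lemma~\ref{lem:aU24minor} (since the induced tripartition on $G-v$ has $\Ff$ non-degenerate, the hyperplane has a $U_{2,4}$-minor), and 3-connectivity follows from Theorem~\ref{lem:M2conn_G2conn} plus $G-v$ being 3-connected (so no 2-sum decomposition of the hyperplane is compatible with the induced subgraph). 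That gives $|V(G)|$ 3-connected non-binary hyperplanes, all of which must be complementary to stars in any framework $G'$, and the rank count and incidence bookkeeping then finish as you describe.

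A smaller issue: your step (i) argues that $(\Bb',\Ll',\Ff')$ must be non-degenerate because otherwise $M$ would be lifted-graphic or frame, contradicting non-degeneracy of the $G-v$. That reasoning is backwards — the hypothesis controls the tripartition induced by $(\Bb,\Ll,\Ff)$ on vertex-deleted subgraphs of $G$, not the set of all possible frameworks for $M$; it does not on its face rule out some other framework realising $M$ as frame or lifted-graphic. The paper avoids needing this claim at all by phrasing the recognition in purely matroid-theoretic terms.
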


We say a hyperplane $H$ of a matroid $M$ is 3-connected or binary when the restriction of $M$ to $H$ has the property of being 3-connected or binary. 

\begin{lemma} \label{lem:starcocircuits}
Let $M$ be a matroid with framework $G$. 
The complementary cocircuit of a 3-connected non-binary hyperplane of $M$ is a star. 
\end{lemma}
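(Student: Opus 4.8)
The plan is to apply Theorem~\ref{thm:equivalences}, writing $M=M(G,\Bb,\Ll,\Ff)$ for a proper tripartition of the cycles of $G$, and then to pin down the complementary cocircuit $C^\ast:=E(M)\setminus H$ using the cocircuit classification of Theorem~\ref{thm:cocircuits}, with the rank formula of Lemma~\ref{lem:rank} as the main computational tool. First come some reductions. Since $M|H$ is $3$-connected it is loopless and connected, and since it is nonbinary it has at least four elements; as every loop of $M$ lies in every hyperplane, $M$ is loopless, and one checks that a hyperplane restriction of a disconnected matroid can be $3$-connected only in a degenerate way, so we may assume $M$ is connected and the tripartition is genuinely defined. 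By Theorem~\ref{prop:G2conn_minors}, deleting $C^\ast$ from $(G,\Bb,\Ll,\Ff)$ produces a framework for $M|H$ carrying the induced tripartition; as $M|H$ is nonbinary it is not graphic, so $G-C^\ast$ contains an unbalanced cycle, hence $\Bb$ does not contain all cycles of $G-C^\ast$ and $C^\ast$ is not a minimal balancing set of $(G,\Bb)$. In other words, $C^\ast$ is not of type~(1) in Theorem~\ref{thm:cocircuits}.

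The core of the argument is to show that the $3$-connectivity of $M|H$ forces $G-C^\ast$ to have a single non-edgeless component $Y$, that $Y$ is unbalanced, and that the ``other side'' of $C^\ast$ is a single vertex $v$; then $C^\ast$ is the star at $v$. I would establish this in two steps. If $Z$ is a balanced component of $G-C^\ast$ containing an edge, then no bracelet, theta, or pair of handcuffs of $G-C^\ast$ meets $E(Z)$ (all of these subgraphs need unbalanced cycles), so $E(Z)$ is a separator of $M|H$; hence every balanced component of $G-C^\ast$ is edgeless. If $G-C^\ast$ had two distinct components $Y_1,Y_2$ each containing an unbalanced cycle, then a short computation from Lemma~\ref{lem:rank} gives $\lambda_{M|H}(E(Y_1),E(Y_2))=1$; when both $|E(Y_i)|\ge 2$ this is a $2$-separation of $M|H$, and when some $|E(Y_i)|=1$ that edge is an unbalanced loop of $G$, so $M$ is frame or lifted-graphic by Theorem~\ref{thm:quasigraphic_with_a_loop} and one argues directly that two unbalanced loops in distinct components of $G-C^\ast$ are either a parallel pair of $M|H$ or each a coloop of $M|H$. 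So $G-C^\ast$ has exactly one unbalanced component, and running through Theorem~\ref{thm:cocircuits}: type~(2) has a balanced, hence edgeless, hence single-vertex side, and $C^\ast$ is the star at that vertex; type~(3) would need two unbalanced components and is thereby ruled out (barring the single-vertex-with-unbalanced-loop subcase, which again gives a star); and in type~(4), $C^\ast=K\cup B$ with $G-K$ having components $X,Y_1,\dots,Y_m$, so $m=1$, and then $X-B$ balanced and edgeless together with $B$ a minimal balancing set of the connected graph $X$ forces $X$ to be a single vertex, $B=\emptyset$, and $C^\ast=K$ a star. Note that when $(\Bb,\Ll,\Ff)$ is non-degenerate $G$ is loopless by Theorem~\ref{thm:quasigraphic_with_a_loop}, so every unbalanced component of $G-C^\ast$ has at least two edges and the loop subcases above do not occur at all.

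It remains to handle the degenerate case, where $M$ is a frame or a lifted-graphic matroid; here the result follows from the known descriptions of the cocircuits of $F(G,\Bb)$ and $L(G,\Bb)$, the lifted-graphic case being the analysis above (with any unbalanced loop of $G$ staying inside $H$). I expect the frame case to be the main obstacle: now $G$ may carry unbalanced loops, and an unbalanced loop at a vertex $v$ continues to contribute to the rank until it is itself deleted, so it can appear inside a cocircuit that is otherwise the star at $v$; the real work is to use the rank bookkeeping of Lemma~\ref{lem:rank} and the $\lambda$-computations above to rule this out whenever the corresponding hyperplane is $3$-connected and nonbinary --- together with carefully separating the ``lift-type'' joins of components (a dependent cross-bracelet keeps $M|H$ connected, so only the $\ge 2$-element side is relevant) from the ``frame-type'' joins (no loose handcuff can link disjoint components, so a second unbalanced component forces a coloop of $M|H$). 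Everything else reduces to direct bookkeeping with Theorems~\ref{thm:cocircuits}, \ref{thm:connected_QG_connected_G}, and Lemma~\ref{lem:rank}.
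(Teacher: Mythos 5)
Your approach is the same as the paper's: write $M$ via a proper tripartition, apply the cocircuit classification of Theorem~\ref{thm:cocircuits}, rule out type~(1) because $M|H$ is nonbinary, rule out types~(3) and~(4) via the $3$-connectivity of $M|H$, and conclude that $C^\ast$ is a type-(2) bond with a single-vertex balanced side, hence a star. The structure of your casework, the observation that a balanced component of $G-C^\ast$ with an edge would yield a $1$-separation, and the rank computation ruling out two unbalanced components are all sound and mirror the paper's (terser) argument.

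The one genuine gap is the one you flag yourself: the degenerate case where $M$ is frame or lifted-graphic. Theorem~\ref{thm:cocircuits} is stated only for non-degenerate tripartitions, so it does not directly apply there, and you explicitly leave the frame case unfinished. You should be aware, however, that the paper's own proof shares this gap---it invokes Theorem~\ref{thm:cocircuits} without checking the non-degeneracy hypothesis. In context the gap is harmless, since the lemma is used only inside the proof of Theorem~\ref{thm:unique_representation}, whose hypotheses force the tripartition of every framework for $M$ to be non-degenerate (and hence, by Theorem~\ref{thm:quasigraphic_with_a_loop}, force the framework to be loopless, so the loop subcases you worry about do not occur); but a free-standing proof of the lemma as stated would still need the frame and lift cases, e.g.\ via Zaslavsky's cocircuit descriptions for those classes. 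One small correction: when the two disjoint unbalanced components both carry cycles in $\Ff$ (the type-(4) configuration), Lemma~\ref{lem:rank} gives $\lambda=0$, not $1$, so $M|H$ is outright disconnected there. Either value kills $3$-connectivity, so your conclusion stands.
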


\begin{proof} 
Let $(\Bb,\Ll,\Ff)$ be a proper tripartition of the cycles of $G$ such that $M=M(G,\Bb,\Ll,\Ff)$. 
If $(\Bb,\Ll,\Ff)$ is degenerate then $M$ is either frame or lifted-graphic, so the statement holds. 
So assume $(\Bb,\Ll,\Ff)$ is non-degenerate. 
Let $C$ be the complementary cocircuit of a 3-connected non-binary hyperplane $H$ of $M$. 
Then $C$ has one of the forms described in statements (1)-(4) of Theorem \ref{thm:cocircuits}. 
\begin{itemize} 
\item $C$ is not (1) a minimal balancing set of $\GB$, since then $H$ would be graphic. 
\item $C$ is not (2) a bond separating $G$ into connected subgraphs $X$ and $Y$ for which $X$ is balanced and $Y$ is unbalanced, unless $E(X)=\emptyset$, since otherwise $H$ would be disconnected. 
\item $C$ is not (3) a bond separating $G$ into two connected subgraphs $X$, $Y$, both of which are unbalanced and with every unbalanced cycle in $\Ll$, since then $(E(X), E(Y))$ would be a 2-separation of $H$. 
\item $C$ is not of the form of statement (4), since then $H$ would be disconnected. 
\end{itemize} 
Thus $C$ is a star. 
\end{proof}

\begin{lemma} \label{lem:aU24minor}
Let $G$ be a 2-connected graph with proper tripartition $(\Bb,\Ll,\Ff)$ of its cycles. 
If $\Ff$ is non-degenerate, then $M(G,\Bb,\Ll,\Ff)$ is non-binary. 
\end{lemma}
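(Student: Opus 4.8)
\emph{Proposal.} The plan is to produce $U_{2,4}$ as a minor of $M(G,\Bb,\Ll,\Ff)$; since a matroid is binary if and only if it has no $U_{2,4}$-minor, this proves the lemma. Since $\Ff$ is non-degenerate there are vertex-disjoint cycles $C,D\in\Ff$. Because $G$ is $2$-connected, no single vertex separates $V(C)$ from $V(D)$ (for any $v$ the graph $G-v$ is connected, hence contains a $V(C)$--$V(D)$ path), so by Menger's theorem there are two vertex-disjoint $V(C)$--$V(D)$ paths $P_1,P_2$, each internally disjoint from $C\cup D$, where $P_i$ joins $x_i\in V(C)$ to $y_i\in V(D)$ and automatically $x_1\neq x_2$, $y_1\neq y_2$.

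Next I would pin down a non-balanced cycle using the theta property. Let $C',C''$ be the two $x_1$--$x_2$ arcs of $C$, let $D'$ be one of the two $y_1$--$y_2$ arcs of $D$, and set $\pi=P_1\cup D'\cup P_2$. Then $C',C'',\pi$ are three internally disjoint $x_1$--$x_2$ paths, hence form a theta subgraph whose cycles are $C=C'\cup C''$, $R_1=C'\cup\pi$, and $R_2=C''\cup\pi$. Since $C\in\Ff$ we have $C\notin\Bb$, so by the theta property not both $R_1,R_2$ lie in $\Bb$; relabel the arcs so that $R_1=C'\cup P_1\cup D'\cup P_2\notin\Bb$, i.e.\ $R_1\in\Ll\cup\Ff$. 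Now take the minor: choose $e\in E(C'')$, $f\in E(D'')$, and let $g_i$ be the edge of $P_i$ at $x_i$, and put $Z=(C-e)\cup(D-f)\cup(P_1-g_1)\cup(P_2-g_2)$. Then $G[Z]$ is a forest (a tree spanning $V(C)$, together with $D-f$ with the pendant paths $P_i-g_i$ attached at $y_i$), so by repeated use of Theorem~\ref{prop:G2conn_minors} the minor $M/Z$ restricted to $\{e,f,g_1,g_2\}$ is $M(H')$, where $H'$ has two vertices $p$ (the contracted $C$-tree) and $q$ (the contracted $D$-tree), a loop $e$ at $p$, a loop $f$ at $q$, and parallel edges $g_1,g_2$ between $p$ and $q$. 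Tracking cycles through the forest $Z$, the loop $e$ lifts to $e\cup(C-e)=C\in\Ff$, the loop $f$ lifts to $D\in\Ff$, and the digon $\{g_1,g_2\}$ lifts to $C'\cup P_1\cup D'\cup P_2=R_1\notin\Bb$. Hence in $M(H')$ none of the three cycles is balanced; $\{e,g_1,g_2\}$ and $\{f,g_1,g_2\}$ are tight handcuffs with neither cycle in $\Bb$ (so circuits), $\{e,f,g_1\}$ and $\{e,f,g_2\}$ are loose handcuffs with both cycles in $\Ff$ (so circuits), while $\{e,f\}$ is a bracelet with both cycles in $\Ff$ (so independent) and no other pair is dependent. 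Thus the circuits of $M(H')$ are exactly the four $3$-subsets of $\{e,f,g_1,g_2\}$, i.e.\ $M(H')\cong U_{2,4}$, so $M(G,\Bb,\Ll,\Ff)$ is non-binary.

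The point requiring care is the choice of which edges to contract: contracting carelessly could leave the digon $\{g_1,g_2\}$ lifting to a balanced cycle, in which case $g_1,g_2$ would be parallel in $M(H')$ and $M(H')$ would be merely a (binary) parallel extension of $U_{2,3}$. The theta property applied to the theta on $C',C'',\pi$ is exactly what lets us select the arcs $C',D'$ so that $R_1\notin\Bb$, and this is the crux; the remaining points — that $G[Z]$ is a forest, that $H'$ has the stated shape, and the routine circuit bookkeeping for $M(H')$ — are straightforward.
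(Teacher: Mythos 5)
Your proof is correct, but it takes a genuinely different route from the paper. The paper's proof of this lemma is a one-liner: it picks two vertex-disjoint cycles $C,C'\in\Ff$ and two vertex-disjoint connecting paths $P,P'$, restricts $M(G,\Bb,\Ll,\Ff)$ to $E(C\cup P\cup P'\cup C')$, and invokes Lemma~6 of Slilaty's paper on unique representations of bias matroids to conclude that this restriction already has a $U_{2,4}$-minor. You instead build the $U_{2,4}$-minor by hand: you contract an explicit spanning forest $Z$, reduce to a two-vertex graph $H'$ with loops $e,f$ and a digon $\{g_1,g_2\}$, and then read off the four three-element circuits directly from the tripartition rules. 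Your self-contained argument is longer but more transparent, and it makes visible exactly where the biased-graph hypotheses enter; in particular you correctly identify the crux — namely that contracting carelessly could make the digon lift to a balanced cycle (giving only a parallel extension of $U_{2,3}$), and that the theta property applied to the theta $C'\cup C''\cup\pi$ is what guarantees an unbalanced choice of arcs. This is precisely the kind of bookkeeping that the appeal to Slilaty's lemma hides. The only implicit assumption you share with the paper's proof is that $|V(C)|,|V(D)|\geq 2$ (so that Menger gives paths with distinct endpoints on each cycle); in the paper's single application of the lemma (inside the proof of Theorem~\ref{thm:unique_representation}, where $G-v$ is $3$-connected) this causes no difficulty, and the paper's own wording carries the same implicit assumption.
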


\begin{proof} 
Let $C$ and $C'$ be a pair of vertex-disjoint cycles in $\Ff$, and let $P, P'$ be a pair of vertex-disjoint paths linking $C$ and $C'$. 
By \cite[Lemma 6]{Slilaty:UniqueRepresentations}, the restriction of $M(G,\Bb,\Ll,\Ff)$ to $E(C \cup P \cup P' \cup C')$ contains $U_{2,4}$ as a minor. 
\end{proof}

\begin{proof}[Proof of Theorem \ref{thm:unique_representation}]
Set $M=M(G,\Bb,\Ll,\Ff)$. 
Let $v \in V(G)$. 
By Proposition \ref{prop:star_cocircuits} the star at $v$ is a cocircuit of $M$. 
Let $H$ be its complementary hyperplane. 
Since in the induced tripartition of the cycles of $G-v$ the collection $\Ff$ is non-degenerate, by Lemma \ref{lem:aU24minor} $H$ is non-binary. 
Since $G$ is 4-connected, $G-v$ is 3-connected. 
We claim $H$ is 3-connected. 
To see that this is so, suppose the contrary. 
Then by Theorem \ref{lem:M2conn_G2conn} $H$ is obtained via 2-sums of a single 3-connected quasi-graphic matroid with graphic matroids. 
But any such 2-sum induces either a 1- or a 2-separation of $G[E(H)]$, a contradiction. 

Thus $M$ has $|V(G)|$ 3-connected non-binary hyperplanes. 
By Lemma \ref{lem:starcocircuits}, in every framework for $M$ the complementary cocircuit of each of these hyperplanes is a star. 
\end{proof}

\section{Biased-graphic matroids} 
\label{sec:Biased_graphic_matroids}

We now return to Zaslavsky's question about matroids whose independent sets are ``intermediate" between those of $L\GB$ and $F\GB$ for a given biased graph $\GB$. 
As noted at the end of Section \ref{sec:quasigraphicintro}, if $M$ is quasi-graphic and $G$ is a framework for $M$, then setting $\Bb = \{ C : C$ is a cycle of $G$ and a circuit of $M\}$ yields a biased graph $\GB$ {\color{black} for which $M$} satisfies Zaslavsky's intermediate condition  
\[
\Ii(L\GB) \subseteq \Ii(M) \subseteq \Ii(F\GB). 
\]
{\color{black}
If $M$ or $G$ is connected, then (by Theorems \ref{thm:BraceletFunctionPropriety} and \ref{new_Thm_2point1}) defining a bracelet function $\chi$ on the set of bracelets of $(G,\Bb)$ by $\chi(B) = \mathsf{dependent}$ if and only if $B$ is a circuit of $M$, yields the precise description of $M$ as $M(G,\Bb,\chi)$, and the obvious statement 
\[\Ii(L\GB) \subseteq \Ii(M(G,\Bb,\chi)) \subseteq \Ii(F\GB) \]
since $L(G,\Bb)$ and $F(G,\Bb)$ are the matroids $M(G,\Bb,\chi')$ with $\chi'$ mapping every bracelet to $\mathsf{dependent}$, and $\mathsf{independent}$, respectively.

Zaslavsky has asked for an intermediate matroid construction that respects restriction. 
By Theorem \ref{fixed_2_thm:equivalences}, the map given by bracelet functions on biased graphs 
from the set of all connected biased graphs to the set of matroids is an intermediate matroid construction; by Theorem \ref{prop:G2conn_minors} this map respects restriction. 
But the domain of this map is restricted to connected graphs. We would like an intermediate construction with domain all biased graphs. 
}

\subsection{$I$-component-respecting Whitney equivalent graphs} \label{sec_component-resp-Whitney-equiv}
{\color{black}
This section deals with some technicalities regarding descriptions of quasi-graphic matroids that are not connected. 
The definition of a quasi-graphic matroid clearly includes disconnected matroids, but our bracelet function and tripartition constructions in general do not. 
Here we address this issue. 
In the next subsection, in answer to Zaslavsky's question we introduce a new class of matroids which we call \emph{biased-graphic}. 
The notions introduced in this subsection on the interplay between matroid and graph connectivity motivate the key part of the definition of this class. 

Let $G$ be a graph, and let $H_1, \ldots, H_n$ be the connected components of $G$. 
Let $I = \{I_1, I_2, \ldots, I_m\}$ be a partition of $\{1, 2, \ldots, n\}$; 
for each $I_j \in I$, let $G_{I_j}$ be the disjoint union $\bigcup_{i \in I_j} G_i$. 
For each subgraph 
$G_{I_j}$ of $G$, 
let $\Bb_{I_j}$ be a collection of cycles of $G_{I_j}$ satisfying the theta property, 
and let $\chi_{I_j}$ be a proper bracelet function defined on the bracelets of $(G_{I_j}, \Bb_{I_j})$. 
Let $\Bb = \bigcup \Bb_{I_j}$ and let $\chi_I$ be the bracelet function assigning $\mathsf{dependent}$ to a bracelet $B$ of $G$ if and only if $\chi_{I_j}(B) = \mathsf{dependent}$ for some bracelet function $\chi_{I_j}$, $I_j \in I$. 
While $\chi_I$ is not necessarily proper, the restriction of $\chi_I$ to each biased subgraph $(G_{I_j}, \Bb_{I_j})$ of $(G,\Bb)$ is proper; 
we say in this case that $\chi_I$ is \emph{component-wise proper}. 

It is straightforward to see that proof of Theorem \ref{thm:bracelet_defn_of_matroid} extends to component-wise proper bracelet functions, as follows.

\begin{theorem} \label{thm:component-wise-proper_bracelet_function_defn}
Let $(G,\Bb)$ be a biased graph, and let $\chi_I$ be a component-wise-proper bracelet function for $(G,\Bb)$ for some partition $I$ of the set indexing the connected components of $G$. 
Then $\Cc(G,\Bb,\chi_I)$ is the set of circuits of a matroid. 
\end{theorem}

As before, we denote the matroid of Theorem \ref{thm:component-wise-proper_bracelet_function_defn} by $M(G,\Bb,\chi_I)$. 

Given a biased graph $(G,\Bb)$ with connected components $G_1, \ldots, G_n$ along with a partition $I$ of $\{1, \ldots, n\}$ and a component-wise proper bracelet function $\chi_I$ for $(G,\Bb)$ with respect to $I$, 
{\color{black}for each $I_j$ of the partition, let $H_{I_j}$ be the union of those connected components of $G_{I_j}$ that contain an unbalanced cycle contained in a bracelet assigned $\mathsf{dependent}$ by $\chi_{I_j}$, and let $H_{I_j}'$ be a graph obtained from $H_{I_j}$}
by successively identifying pairs of vertices in distinct connected components of 
{\color{black}$H_{I_j}$.} 
{\color{black}
Let $G'$ be the disjoint union of the graphs $H_j'$. 
Then $H_{I_j}'$ is a connected component of $G'$ if and only if $H_{I_j}'$ is obtained from the set of graphs $\{G_i : G_i \subseteq H_{I_j}\}$. 
Now let $H$ be the disjoint union of all connected components $G_i$ of $G$ not contained in any subgraph $H_{I_j}$ along with all the connected subgraphs $H_{I_j}'$. 
It is clear that for each $I_j$ the restriction of $M(G,\Bb,\chi_I)$ to $E(H_{I_j})$ is a connected matroid. 
Hence by Corollary \ref{thm:liftedgraphic_has_connected_graph}, $H$ is a framework for $M(G,\Bb,\chi_I)$.
}
Let us call $H$ an \emph{$I$-component-respecting} Whitney equivalent graph for $G$ and $\chi_I$.
{\color{black}We record this fact in the following theorem.} 

\begin{theorem} \label{thm:component-wise-proper_bracelet_function_defn_WhitneyEq_version}
Let $(G,\Bb)$ be a biased graph, and let $\chi_I$ be a component-wise-proper bracelet function for $(G,\Bb)$ for some partition $I$ of the set indexing the connected components of $G$. 
Let $H$ be an $I$-component-respecting Whitney equivalent graph for $G$. 
Then $\Cc(H,\Bb,\chi_I)$ is the set of circuits of the matroid $M(G,\Bb,\chi_I)$, 
and each component of $M(G,\Bb,\chi_I)$ is contained in a component of $H$. 
\end{theorem}

Conversely, given a matroid $M$ with framework $G$, 
if $M$ has components $M_1, \ldots, M_n$ while $G$ has connected components $G_1, \ldots, G_m$, 
then by Corollary \ref{thm:liftedgraphic_has_connected_graph} $M$ has an $I$-component-respecting Whitney equivalent framework $H$ for $G$, where 
\[ I = \{\{i : E(G_i) \subseteq M_j \} : j \in \{1, \ldots, n\}\}.\] 
Taking as $\Bb$ those cycles of $G$ that are circuits of $M$ and defining $\chi_I$ as the bracelet function whose dependent bracelets are precisely those given by a proper bracelet function $\chi_j$ for each component $H_j$ of $H$ defined according to dependence in component $M_j$ of $M$, we obtain a biased graph $(H,\Bb)$ along with a component-wise-proper bracelet function $\chi_I$ for which $M = M(H,\Bb,\chi_I)$. 
We may define a bracelet function $\chi_I'$ for $\GB$ by $\chi_I'(B) = \chi_I(B)$ if $B$ is a bracelet of both $G$ and $H$, and $\chi_I'(B) = \mathsf{dependent}$ if $B$ is a bracelet of $G$ that is a tight handcuff in $H$. 
Let us call a bracelet function $\chi_I'$ that may be be obtained in this manner a 
\emph{handcuff-linking component-wise proper bracelet function}. 
Then for every subset $X$ of $E(M)$, $X \in \Cc(G,\Bb,\chi_I')$ if and only if $X \in \Cc(H,\Bb,\chi_I)$. 
Thus $M = M(G,\Bb,\chi_I') = M(H,\Bb,\chi_I)$, and 
\[\Ii(L\GB) \subseteq \Ii(M) \subseteq \Ii(F\GB) \]
The notions of component-wise proper and 
handcuff-linking component-wise proper bracelet functions, 
Theorem \ref{thm:component-wise-proper_bracelet_function_defn}, along with Theorem \ref{fixed_thm:equivalences}, extend the bracelet function and tripartition descriptions to matroids that are not necessarily connected. 
For any biased graph $\GB$ with 
component-wise proper 
bracelet function $\chi_I$
with respect to some partition $I$ of the set indexing the connected components of $G$, 
$M(G,\Bb,\chi_I)$ is a quasi-graphic matroid. 
We now have an intermediate matroid construction respecting restriction whose domain includes all biased graphs equipped with component-wise bracelet functions: 
given a biased graph $(G,\Bb)$, a partition $I$ of the index set of its connected components, and a corresponding 
component-wise proper 
bracelet function $\chi_I$, 
\[ \Ii(L(G,\Bb)) \subseteq \Ii(M(G,\Bb,\chi_I)) \subseteq \Ii(F(G,\Bb)).\] 
}

Given a biased graph $\GB$, are there any other matroids intermediate between $L\GB$ and $F\GB$? 
We propose a natural 
{\color{black}(and necessary)} 
non-degeneracy condition, and show that subject to this condition the answer is, 
{\color{black}``yes, but not that are 3-connected''.}
More precisely, given a biased graph $\GB$, subject to this non-degeneracy condition we show that if $G$ is 2-connected then all matroids intermediate for $\GB$ are quasi-graphic, and if $G$ is connected but not 2-connected, then all matroids intermediate for $\GB$ are obtained as 2-sums of lifted-graphic and frame matroids.  
We also show that if $G$ is a framework for a connected quasi-graphic matroid $M$ and $G$ is not 2-connected, then $M$ is either lifted-graphic or frame.  
Thus we show that the only matroids intermediate between $L\GB$ and $F\GB$ for some biased graph $\GB$ that are not quasi-graphic arise as 2-sums of lifted-graphic and frame matroids, while all quasi-graphic matroids with a framework that is not 2-connected are obtained as 2-sums in which either all summands are lifted-graphic or all summands are frame. 

\subsection{Biased-graphic matroids} \label{sec_BGMatroids}

Let $M$ be a matroid. 
Let us say that $M$ is \emph{biased-graphic}
if 
{\color{black}there exists a graph $G$ for which}
each of the following hold:
\begin{enumerate}
\item[{\scshape (b1)}] \hypertarget{BGM(1)} $E(G) = E(M)$, 
\item[{\scshape (b2)}] \hypertarget{BGM(2)} the collection $\Bb = \{ C : C$ is a circuit of $M$ and a cycle of $G\}$ satisfies the theta property and
\[
\Ii(L\br{G,\Bb}) \subseteq \Ii\br{M} \subseteq \Ii\br{F(G,\Bb)}
\]
\item[{\scshape (b3)}] \hypertarget{BGM(3)} every component of $M$ is contained in a component of $G$.
\end{enumerate} 

{\color{black}
A \emph{vertex identification} operation in a graph is the operation of identifying to a single vertex a pair of distinct vertices, one from each of two distinct components. 
We say a graph $H$ is obtained from $G$ \emph{by vertex identification} if $H$ is obtained from $G$ by a sequence vertex identification operations.
Note that if $G$ is a graph satisfying 
\BGone\ and \BGtwo, 
and $H$ is obtained from $G$ by vertex identification, then $G$ and $H$ have precisely the same set of cycles. 
Thus the collection of cycles of $H$ that are circuits of $M$ and the collection of cycles of $G$ that are circuits of $M$ are precisely the same. 

{\color{black}
\begin{obs} \label{relation_bt_vertex_identified_graphs}
Let $M$ be a matroid and let $H$ be a graph satisfying 
\BGone, \BGtwo, and \BGthree. 
If $H$ is obtained from the graph $G$ by vertex identification, then $G$ satisfies 
\BGone\ and \BGtwo. 
\end{obs}

\begin{proof} 
The matroids $L(H,\Bb)$ and $L(G,\Bb)$ are equal. 
All circuits of $F(G,\Bb)$ induce connected subgraphs of $(G,\Bb)$, so if $X$ is independent in $F(H,\Bb)$, then $X$ is independent in $F(G,\Bb)$. 
\end{proof}
}

This observation justifies the following definition. 
Let $M$ be a biased graphic matroid. 
A graph $G$ is a \emph{graph for $M$} if there is a graph $H$ obtained from $G$ by vertex identification that satisfies 
\BGone, \BGtwo, and \BGthree. 
We also say that $M$ is \emph{represented by $G$}, that $G$ is a \emph{representation for} $M$, and that $M$ is biased-graphic \emph{with graph} $G$. 

Let us provide a brief description of the rationale for this definition. 
A disconnected biased graph $(G,\Bb)$ may have its lift matroid $L(G,\Bb)$ connected. 
We would like to allow such representations. 
Moreover, we would like the restriction of a biased-graphic matroid $M$, say represented by the graph $G$, to be represented by the corresponding subgraph of $G$. 
That is, if $N = M \del X$, and $G$ is a graph for $M$, then we would like that $G \del X$ provide a representation for $N$. 
It turns out that while condition 
\BGthree\
in the definition of a biased-graphic matroid is crucial, it need not prevent us from allowing disconnected graphs to represent a connected matroid. 
Before we can prove that this is in fact the case, we need to know more about biased-graphic matroids and graphs satisfying 
\BGone, \BGtwo, and \BGthree. 

When $G$ is a graph for the matroid $M$, we denote by $\Bb_G$ the set of cycles of $G$ that are circuits of $M$, and by $H_G$ a graph obtained from $G$ by vertex identification that satisfies 
\BGone, \BGtwo, and \BGthree. 
As before, we refer to the cycles in the collection $\Bb_G$ as \emph{balanced}. 
We write $(G,\Bb_G,H_G)_M$, or simply $(G,\Bb,H)$ when $M$ is clear from context, for the triple consisting of the graph $G$ for the matroid $M$ with collection of balanced cycles $\Bb_G$ and graph $H$ obtained from $G$ by vertex identification for which 
\BGone, \BGtwo, and \BGthree\
hold. 
We say in this case that the triple $(G,\Bb,H)$ 
provides 
\emph{biased-graphic credentials} 
for $M$, and that $(G,\Bb,H)$ 
\emph{are credentials} for $M$. 

Condition \BGthree\ is required to deal with the unavoidable discrepancies that exist in graph versus matroid connectivity. 
The examples of graphic, frame, and lifted graphic matroids show that we cannot demand that the connectivity of a graph $G$ and of a matroid defined on the edge set of $G$ correspond. 
Graphs Whitney equivalent to a given graph $G$ have precisely the same set of cycles as $G$, so 
we ask instead just that there exist a Whitney equivalent graph whose components respect those of $M$, in the sense of condition 
\BGthree. 
In fact, we do not require Whitney flips, just Whitney's vertex identification and cleaving (the inverse of identification)  operations.}

Thus $M$ is biased-graphic if there is a biased graph $\GB$ such that $M$ is intermediate between $L\GB$ and $F\GB$, so long as 
{\color{black}there exists a graph $H$ Whitney equivalent to $G$, for which} 
distinct components of $M$ do not meet more than one component of {\color{black}$H$, and for which $M$ remains intermediate between $L(H,\Bb)$ and $F(H,\Bb)$.}
Condition \BGthree\ is a non-degeneracy condition required to avoid issues such as that raised by Example \ref{ex:bad_all_loops_example} in Section \ref{sec:Intermediate_matroids}. 

All circuits of a frame matroid $F\GB$ induce connected subgraphs of $G$, so all frame matroids are biased-graphic. 
{\color{black}Our non-degeneracy condition 
\BGthree\
is inspired by the fact} that while there exist biased graphs $\GB$ in which $G$ is disconnected while the lifted-graphic matroid $L\GB$ is connected, in this case there always exists a connected graph $H$ obtained by identifying pairs of vertices in distinct components of $G$ such that $L\GB = L(H,\Bb)$ (by Corollary \ref{thm:liftedgraphic_has_connected_graph}). 
Thus condition 
\BGthree\
does not exclude any lifted-graphic matroid. 
{\color{black}In fact, Corollary \ref{thm:liftedgraphic_has_connected_graph} implies that all quasi-graphic matroids are biased graphic.}

\begin{theorem} \label{thm:allquasigraphicarebiasedgraphic}
Let $M$ be a quasi-graphic matroid.
Then $M$ is biased-graphic.
\end{theorem}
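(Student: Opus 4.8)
The plan is to start from a framework $G$ for $M$ (which exists since $M$ is quasi-graphic) and the associated partition $(\Bb,\Uu)$ of its cycles, where $\Bb$ consists of the cycles that are circuits of $M$. By the discussion at the end of Section~\ref{sec:quasigraphicintro}, conditions (1) and (2) in the definition of a biased graphic matroid are already satisfied for the biased graph $\GB$. So the entire content of the theorem is to arrange that condition (3) also holds, i.e.\ that no component of $M$ straddles more than one component of $G$.

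First I would reduce to the connected case: it suffices to show that if $M$ is connected and $G$ is a framework for $M$, then one can find a connected framework (or, more generally, a framework in which condition (3) holds automatically). This is exactly what Corollary~\ref{thm:liftedgraphic_has_connected_graph} provides, once we know $M=M(G,\Bb,\Ll,\Ff)$ for a proper tripartition: by Theorem~\ref{thm:equivalences}(3)$\Rightarrow$(2) such a tripartition exists, and then Corollary~\ref{thm:liftedgraphic_has_connected_graph}(1) says that either $G$ is already connected or $M$ is lifted-graphic, in which case Corollary~\ref{thm:liftedgraphic_has_connected_graph}(2) gives a connected graph $H$ for $M$ obtained by successively identifying vertices in distinct components of $G$. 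One must check that this identification process preserves the framework axioms and the property that the cycles-that-are-circuits partition is unchanged; this is where the bulk of the routine verification lies, but it follows the same pattern as the classical lifted-graphic argument and the cryptomorphisms of Section~\ref{sec:examples}.

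For the general (disconnected $M$) case, I would decompose $M$ into its connected components $M_1,\dots,M_k$, apply the connected case to each $M_i$ to obtain a connected graph $G_i$ that is a framework for $M_i$ with the correct bias, and then take $G$ to be the disjoint union $G_1 \sqcup \cdots \sqcup G_k$. One then checks that $G$ is a framework for $M$: axiom (1) is immediate, axioms (2) and (3) are local to components and hence inherited from the $G_i$, and axiom (4) holds because a circuit of $M$ lies entirely within one $M_i$ and hence within one $G_i$, where it already induces at most two components. Finally, the partition of cycles of $G$ into circuits and non-circuits restricts on each $G_i$ to the partition for $M_i$, so $\GB$ satisfies $\Ii(L\GB)\subseteq\Ii(M)\subseteq\Ii(F\GB)$ componentwise, hence globally; and condition (3) holds by construction since each component of $M$ is a single $M_i$, contained in the single component $G_i$ of $G$.

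The main obstacle I anticipate is the verification that the vertex-identification step in Corollary~\ref{thm:liftedgraphic_has_connected_graph}(2) genuinely yields a framework for $M$ with the same biased graph data, rather than merely a graph whose lift matroid agrees with $M$; in other words, one must confirm that identifying vertices across components neither creates new cycles that ought to be circuits nor destroys the framework closure axiom (3). The cleanest way around this is to observe that when $M$ is connected and lifted-graphic, every tripartition presenting it has $\Ff$ degenerate, so $M=L\GB$, and then the identified graph $H$ satisfies $M=L(H,\Bb')=M(H,\Bb',\Uu',\emptyset)$ directly, at which point Theorem~\ref{thm:equivalences}(2)$\Rightarrow$(3) tells us $H$ is a framework. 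Everything else is bookkeeping.
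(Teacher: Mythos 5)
Your proposal follows essentially the same route as the paper's proof: reduce to the connected case, use Corollary \ref{thm:liftedgraphic_has_connected_graph} (via Theorem \ref{thm:equivalences}) to obtain a connected framework, and then read off the three defining conditions of a biased graphic matroid. The only difference is cosmetic: where you cite the discussion at the end of Section \ref{sec:quasigraphicintro} for condition (2), the paper reverifies it directly from Lemmas \ref{lem:GGWquasicircuits} and \ref{lem:GGW2.6}.
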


\begin{proof}
It suffices to show that every connected quasi-graphic matroid is biased-graphic
{\color{black}(as we may apply the component-wise proper bracelet function construction separately to each component of a given matroid)}.
Let $M$ be a connected quasi-graphic matroid.
{\color{black} 
By Theorem \ref{fixed_thm:equivalences}, $M = M(G,\Bb,\Ll,\Ff)$ for some graph $G$ with proper tripartition $(\Bb,\Ll,\Ff)$ of its cycles.}
{\color{black} So \BGone\ and \BGtwo\ are} satisfied. 
{\color{black}By Corollary \ref{thm:liftedgraphic_has_connected_graph} $M$ has a connected framework $H$ 
obtained by vertex identification from $G$.}
Since $M$ and $H$ are both connected 
{\color{black}and $M=M(H,\Bb,\Ll,\Ff)$}, 
\BGthree\ holds. 
\end{proof}

Are there biased-graphic matroids that are not quasi-graphic? 
The answer is yes, but rather surprisingly, none that are 3-connected. 
Moreover there is an easy characterisation in terms of 2-sums of lifted-graphic and frame matroids. 
To show this, we now turn our attention to the class of biased-graphic matroids itself. 

Recall that the cyclomatic number 
{\color{black}$\beta_G(X)$, or just}
$\beta(X)$ 
{\color{black}when the graph $G$ is clear in context,}
of a subset $X \subseteq E(G)$ is the minimum number of edges that must be removed from the induced subgraph $G[X]$ in order to obtain an acyclic subgraph. 
We say $X$ is \emph{unicyclic} if $\beta(X)=1$. 
The following lemma is key. 
The class of quasi-graphic matroids requires condition \QGfour\ in the definition of a framework in order to avoid the ``catastrophe'' that without it, all matroids would be quasi-graphic \cite{JGT:JGT22177}. 
Though it is a weaker statement than that of condition 
\QGfour\
for frameworks, Lemma \ref{lem:a_circuit_has_at_most_two_components} provides the necessary structure to ensure our class is meaningful. 
{\color{black}It is so that we may deduce Lemma \ref{lem:a_circuit_has_at_most_two_components} that axiom 
\BGthree\
is required in the definition of biased-graphic matroid. 
In fact, just as without axiom 
\QGfour\
for quasi-graphic matroids every matroid would be quasi-graphic, without axiom 
\BGthree\
every matroid would be biased-graphic.}

\begin{lemma} \label{lem:a_circuit_has_at_most_two_components}
Let $M$ be a biased-graphic matroid and let $G$ be a graph for $M$.
If $X$ is a circuit of $M$ then $\beta(X) \leq 2$.
\end{lemma}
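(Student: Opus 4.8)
\textbf{Proof plan for Lemma \ref{lem:a_circuit_has_at_most_two_components}.}

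The plan is to argue by contradiction: suppose $X$ is a circuit of $M$ with $\beta(X) \geq 3$. First I would record the basic structural facts about $X$. Since $X$ is a circuit of $M$ and $\Ii(L\GB) \subseteq \Ii(M)$, the set $X$ is dependent in $L\GB$, hence $\beta(X) \geq 1$ and moreover $G[X]$ contains an unbalanced cycle or $X$ contains a balanced cycle. Since $X \in \Ii(F\GB)$ would make $X$ independent in $M$, we know $X \notin \Ii(F\GB)$; by Zaslavsky's description of circuits of frame matroids, $G[X]$ therefore contains (as a subgraph) a balanced cycle, a theta with all three cycles unbalanced, or tight or loose handcuffs — call such a subgraph a \emph{frame-circuit subgraph}. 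The key point to extract is that each of these frame-circuit subgraphs has at most two components and cyclomatic number at most $2$, and being dependent in $F\GB$ it is dependent in $M$; by minimality of the circuit $X$ it must equal $X$ — \emph{unless} the frame-circuit subgraph is a loose handcuff whose contained bracelet is independent in $M$, in which case we must also rule out that $X$ merely contains a dependent set of $M$ strictly. So the real content is: $X$, being a circuit, is a \emph{minimal} dependent set, and it contains a frame-circuit subgraph as a dependent subset; minimality forces $X$ itself to be that subgraph, and all such subgraphs have $\beta \leq 2$.

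More carefully, here are the steps in order. (i) Show $X$ contains a subset $Y \subseteq X$ that induces a balanced cycle, a theta with no balanced cycle, tight handcuffs, or a loose handcuff whose bracelet is dependent in $M$; for each such $Y$, $Y$ is dependent in $M$ (the first three because they are dependent in $F\GB$ by Zaslavsky's circuit description together with $\Ii(M) \subseteq \Ii(F\GB)$; the loose handcuff because its dependent bracelet is dependent in $M$ via $\Ii(M) \subseteq \Ii(F\GB)$ once we know the bracelet is dependent in $F$ — but bracelets are \emph{independent} in $F\GB$, so this case needs the bracelet's dependence in $M$ directly). The subtlety is that a loose handcuff in $G[X]$ need not be dependent in $M$ at all. (ii) To handle that, observe: if $G[X]$ contains a loose handcuff with independent bracelet, then since $X \notin \Ii(F\GB)$ we may instead find inside $G[X]$ a different frame-circuit subgraph, OR argue that the bracelet plus connecting path, being a loose handcuff, is still dependent in $F\GB$ and hence in $M$. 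Indeed a loose handcuff \emph{is} a circuit of $F\GB$, so it is dependent in $M$ regardless of the bracelet — this resolves the subtlety cleanly. So in all cases $G[X]$ contains a subset $Y$ inducing one of: a balanced cycle, a theta with no balanced cycle, tight handcuffs, or loose handcuffs, and every such $Y$ is a circuit of $F\GB$, hence dependent in $M$. (iii) By minimality of the circuit $X$, $Y = X$. (iv) Each of these four subgraph types has cyclomatic number exactly $1$ or $2$ (a cycle has $\beta = 1$; theta, tight handcuffs, loose handcuffs each have $\beta = 2$), so $\beta(X) \leq 2$, contradicting $\beta(X) \geq 3$.

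The main obstacle I anticipate is step (i)–(ii): correctly arguing that whenever $X$ is dependent in $M$ but we have not yet identified it as one of the four frame-circuit types, the induced subgraph $G[X]$ nonetheless \emph{contains} such a subgraph. This requires using that $X \notin \Ii(F\GB)$ (which holds since $X$ is a circuit, hence dependent, and $\Ii(M) \subseteq \Ii(F\GB)$) together with Zaslavsky's exact description of the circuits of a frame matroid: a dependent set of $F\GB$ contains a balanced cycle, a balanced-free theta, tight handcuffs, or loose handcuffs. One must be slightly careful that "contains as an edge subset inducing" matches "contains as a subgraph", but since we are working with induced subgraphs $G[Y] \subseteq G[X]$ for $Y \subseteq X$ this is automatic. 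Once the frame-circuit subgraph $Y \subseteq X$ is in hand and known to be a circuit of $F\GB$ (hence dependent in $M$), minimality of $X$ does the rest, and the cyclomatic bound $\beta \leq 2$ is immediate from inspection of the four configurations.
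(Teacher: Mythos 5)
Your step (i) rests on the claim that because $X$ is a circuit of $M$, we have $X \notin \Ii(F\GB)$, so that $G[X]$ must contain a frame-circuit subgraph. This is the wrong direction of the inclusion. The definition of biased graphic gives $\Ii(M) \subseteq \Ii(F\GB)$, whose contrapositive is: dependent in $F\GB$ implies dependent in $M$. It does \emph{not} give that dependent in $M$ implies dependent in $F\GB$. Indeed the whole content of the lemma lives exactly in the case you have ruled out by mistake: a circuit $X$ of $M$ with $\beta(X) \geq 3$ that \emph{is} independent in $F\GB$. (For comparison, a bracelet---two vertex-disjoint unbalanced cycles---is independent in $F\GB$ yet may be a circuit of $M$; that has $\beta = 2$, and the lemma's job is to rule out the analogous configurations with $\beta \geq 3$, e.g.\ three pairwise disjoint unbalanced cycles, which are likewise independent in $F\GB$.)

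Once that error is corrected, your reasoning does correctly establish the preliminary structural reduction the paper makes: if $\beta(X) \geq 3$ then $G[X]$ cannot contain any balanced cycle, theta, tight handcuff, or loose handcuff (each is a circuit of $F\GB$, hence dependent in $M$, and each has $\beta \leq 2 < \beta(X)$, so $X$ would strictly contain a dependent set). It follows that $G[X]$ has at least three components, each with cyclomatic number at most one. But at this point $X$ is independent in $F\GB$ and the minimality trick no longer applies, so a genuinely new argument is required. The paper supplies it by exploiting the connectivity of $G$: starting from $Z_1 = X \setminus \{e_2,\dots,e_n\} \cup Y$ (a unicyclic spanning connected subgraph extending one cycle of $X$), it repeatedly adds back one deleted cycle edge $e_{i+1}$, locates the unique loose handcuff created, uses that this handcuff is dependent in $M$ to find a circuit $W_{i+1}$, and deletes an edge $f_{i+1} \in W_{i+1} \setminus X$ from the connecting path. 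After $n-1$ steps this produces an independent set $Z_n \supseteq X$, a contradiction. Without some argument of this kind---one that actually uses that $G$ is a graph for $M$ globally, not just locally on $G[X]$---the lemma does not follow. So the proposal as written has a genuine gap at its central step.
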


\begin{proof}
Let $X$ be a circuit of $M$ and suppose for a contradiction that $\beta_G(X)>2$.
{\color{black}
Let $H$ be a graph Whitney equivalent to $G$ with every component of $M$ contained in a component of $H$, and $\Ii(M) \subseteq \Ii(F(H,\Bb))$.} 
{\color{black}
Then $\beta_H(X) = \beta_G(X)$.}
Then 
{\color{black}$H[X]$}
does not contain a cycle in $\Bb$, and nor does 
{\color{black}$H[X]$}
contain a theta, a pair of tight handcuffs, or a pair of loose handcuffs: each of these has cyclomatic number $\leq 2$ and since each is dependent in $F\GB$, this would imply that $X$ strictly contains a dependent set, a contradiction.
Thus 
{\color{black}$H[X]$}
has at least three components and each component of 
{\color{black}$H[X]$}
has cyclomatic number at most 1.

Let $X_1,\ldots, X_n$ be the edge sets of the components of 
{\color{black}$H[X]$}
with $\beta(X_i)=1$, and let $T_1, \ldots, T_m$ be the edge sets of components of 
{\color{black}$H[X]$}
with $\beta(T_j)=0$.
By assumption $n \geq 3$.
For each $i \in \{1, \ldots, n\}$, let $C_i$ be the cycle contained in $X_i$ and let $e_i$ be an edge in $C_i$.
Then 
{\color{black}$H[X] \del \{e_1, \ldots, e_n\}$}
is a forest.
Since 
{\color{black}$H$}
is connected,
there is a set of edges $Y$ such that the induced subgraph $H[X] \del \{e_1, \ldots, e_n\} \cup Y$ is a tree. 
Let $Z_1 = X \del \{e_2, \ldots, e_n\} \cup Y$.
Since $Z_1$ contains just one cycle (the cycle $C_1$) which is unbalanced, $Z_1$ is independent in $L\GB$, and so independent in $M$.
Consider $Z_1 \cup e_2$. 
This set contains the loose handcuff $H_2$ 
consisting of $C_1 \cup C_2$ together with the path $P_2$ in $G[X] \del \{e_1, \ldots, e_n\} \cup Y$ linking $C_1$ and $C_2$. 
Since $H_2$ is a circuit of $F(H,\Bb)$, $H_2$ is dependent in $M$. 
{\color{black}Because $H_2-e_2$ is independent in $L(H,\Bb)$, $H_2-e_2$ is independent in $M$, so} 
more precisely, $Z_1 \cup e_2$ contains a unique circuit $W_2 \subseteq H_2$ {\color{black}of $M$}.
Since $W_2$ is not contained in $X$, $W_2$ contains at least one edge $f_2$ that is not contained in $X$. 
Thus $f_2 \in P_2$. 
Let $Z_2 = Z_1 \cup e_2 - f_2$.
Since the deletion of $f_2$ destroys the circuit $W_2$, and $W_2$ is the only circuit contained in $Z_1 \cup e_2$, $Z_2$ is independent.
Observe that $H[Z_2]$ has two unicyclic components, one of which contains $C_1$, the other containing $C_2$, and that for each $i \in \{3, \ldots, n\}$, $C_i - e_i$ is contained in one of these two components.

Now consider $Z_2 \cup e_3$.
Let $C' \in \{C_1, C_2\}$ be the cycle disjoint from but contained in the same component of $H[Z_2 \cup e_3]$ as $C_3$. 
Let $P_3$ be the unique path in $Z_2$ linking $C'$ and $C_3$. 
Then $H_3 = C' \cup P_3 \cup C_3$ is a loose handcuff contained in $Z_2 \cup e_3$. 
As before, since $H_3$ is dependent in $F(H,\Bb)$, $H_3$ is dependent in $M$ and so $M$ has a 
{\color{black}unique}
circuit $W_3 \subseteq H_3$. 
Since $W_3$ is not contained in $X$, $W_3$ contains an edge $f_3 \in P_3$ that is not contained in $X$.
Let $Z_3 = Z_2 \cup e_3 - f_3$.
Since deleting $f_3$ destroys the circuit $W_3$ and $Z_2 \cup e_3$ does not contain any other circuit, $Z_3$ is independent.
Observe that $H[Z_3]$ has three unicyclic components, which contain cycles $C_1$, $C_2$, and $C_3$, respectively, and that for each $i \in \{4, \ldots, n\}$, $C_i - e_i$ is contained in one of these components.

Continue in this manner.
In each step $i$ we add edge $e_{i+1}$ to $Z_{i}$.
Since each component of $H[Z_i]$ that contains an edge in $X_1 \cup \cdots \cup X_n$ contains exactly one of the cycles $C_1, \ldots, C_i$, $Z_i \cup e_{i+1}$ contains a unique loose handcuff $H_{i+1}$ containing $C_{i+1}$. 
Since $H_{i+1}$ is a circuit of $F(H,\Bb)$ it is dependent in $M$, so $M$ has a circuit $W_{i+1} \subseteq H_{i+1}$. 
The circuit $W_{i+1}$ contains an edge $f_{i+1}$ not contained in $X$ (else $W_{i+1} \subset X$), and so $f_{i+1}$ is in the path linking the two cycles of the loose handcuff $H_{i+1}$.
Since $W_{i+1}$ is the unique circuit contained in $Z_i \cup e_{i+1}$ and deleting $f_{i+1}$ destroys $W_{i+1}$, $Z_i \cup e_{i+1} - f_{i+1}$ is independent.
Set $Z_{i+1} = Z_i \cup e_{i+1} - f_{i+1}$.
Now $H[Z_{i+1}]$ has $i+1$ unicyclic components, containing cycles $C_1, \ldots, C_{i+1}$, respectively.
For each $j \in \{i+2, \ldots, n\}$ the edge set $C_j - e_j$ is contained in one of these components.

This process ends after $n-1$ steps with an independent set $Z_n$.
We started with a set of edges containing $X \del \{e_2, \ldots, e_n\}$ (the set $Z_1$), added each of $e_2, \ldots, e_n$ to this set, and removed only edges not in $X$. 
So we have reached the contradiction that 
the independent set $Z_n$ contains the circuit $X$. 
\end{proof}

{\color{black}
Lemma \ref{lem:a_circuit_has_at_most_two_components} says that axiom 
\BGthree\ 
for biased-graphic matroids almost achieves axiom 
\QGfour\ 
for quasi-graphic matroids as a consequence: it falls short just in that rather than every circuit inducing a subgraph of at most two components, we are guarenteed only that each circuit induces a subgraph of Betti number at most two. 
We will subsequently see that if $X$ is a circuit of a biased-graphic matroid $M$ with graph $G$ satisfying axiom 
\BGthree, 
and $G[X]$ has more than two connected components, then $M$ is of a particular easily characterised form (Theorem \ref{broken_handcuff_matroids}). 
}

\subsection{Bracelet functions and tripartions}

We now show that our descriptions of quasi-graphic matroids using bracelet functions and proper tripartitions in fact describe biased-graphic matroids. 

\begin{theorem} \label{P:BraceletChoice}
Let $M$ be a biased-graphic matroid with graph $G$. 
If $G$ is 2-connected, then the circuits of $M$ are precisely those
edge sets given by $\Cc(G,\Bb,\chi)$ for some proper bracelet function $\chi$ on the bracelet graph of $\GB$. 
\end{theorem}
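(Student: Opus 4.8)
The plan is to show that the collection of circuits of $M$ must be exactly $\Cc(G,\Bb,\chi)$ for a suitably chosen bracelet function $\chi$, and that this $\chi$ is forced to be proper. First I would verify that $\Bb$ (the cycles of $G$ that are circuits of $M$) satisfies the theta property: a biased theta subgraph is dependent in $F\GB$, hence dependent in $M$, and if exactly two of its three cycles were balanced then the third (a circuit of $M$ by the matroid axioms applied to the two balanced cycles) would contradict the intermediate containment with $L\GB$; this is essentially the standard argument and I would cite the analogous reasoning already used for frame and lifted-graphic matroids. Next, using Lemma~\ref{lem:a_circuit_has_at_most_two_components}, every circuit $X$ of $M$ has $\beta(X)\le 2$, and since $X$ is dependent in $F\GB$ but minimally so, $X$ cannot properly contain a theta, tight handcuff, or loose handcuff (each of cyclomatic number $\le 2$ and dependent in $F\GB$). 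Combined with the structure of $G[X]$ (at most two components, each of cyclomatic number $\le 1$, minimum degree $\ge 2$ after removing pendant trees), this forces $G[X]$ to be a balanced cycle, a theta with no balanced cycle, tight handcuffs, loose handcuffs, or a bracelet — exactly the list in Lemma~\ref{lem:GGWquasicircuits}, recovered here without assuming $M$ is quasi-graphic.

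With that circuit list in hand, I would define $\chi$ on bracelets by $\chi(B)=\mathsf{independent}$ iff $B\in\Ii(M)$. The first task is to check that every bracelet $B$ is either independent in $M$ or is itself a circuit of $M$ (equivalently, $B$ contains no smaller circuit): indeed any circuit $Y\subsetneq B$ would be a balanced cycle or unbalanced cycle, impossible since $B$ is a union of two unbalanced cycles with no balanced cycle inside and no sub-bracelet; and $B$ is dependent in $F\GB$ only if... — actually $B$ is independent in $F\GB$, so dependence of $B$ in $M$ is possible but not forced, which is precisely why $\chi$ carries genuine information. Next I must show $\Cc(M)=\Cc(G,\Bb,\chi)$: the inclusion $\Cc(G,\Bb,\chi)\subseteq\Cc(M)$ follows because balanced cycles, thetas, and tight handcuffs are dependent in $F\GB$ hence in $M$ and (by the circuit list and the no-balanced-cycle hypothesis) minimal; dependent bracelets are circuits by choice of $\chi$; and a loose handcuff whose bracelet is $\chi$-independent is dependent in $F\GB$, hence in $M$, and its only candidate proper sub-circuit (the bracelet) is independent, so it is a circuit. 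The reverse inclusion $\Cc(M)\subseteq\Cc(G,\Bb,\chi)$ is immediate from the circuit list together with the definition of $\chi$ — exactly as in the $(3.\Rightarrow 1.)$ direction of the proof of Theorem~\ref{thm:equivalences}.

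The remaining, and I expect hardest, step is to show $\chi$ is \emph{proper}, i.e.\ constant on each component of the bracelet graph $\mathscr B\GB$. Here is where $2$-connectivity of $G$ is essential and where I would lean on Tutte's path theorem (Theorem~\ref{T:TuttesPathTheorem}) in the manner of Lemma~\ref{lem:everycyclecontainingChassamechivalue}. It suffices to show that adjacent bracelets in $\mathscr B\GB$ receive the same value. So let $B_1=C\cup C_1$ and $B_2=C\cup C_2$ share a cycle $C$ with $\beta(B_1\cup B_2)=3$; I would run the circuit-elimination argument of Theorem~\ref{thm:BraceletFunctionPropriety} verbatim — that proof only uses that $\Cc(G,\Bb,\chi)$ is the circuit set of a matroid (which we now have) and the structure of low-cyclomatic-number subgraphs, not that $\chi$ was already known proper — to derive a contradiction from $\chi(B_1)\ne\chi(B_2)$. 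Then Lemma~\ref{lem:everycyclecontainingChassamechivalue}'s argument (unbalanced cycles $C_1,C_2$ in the same or different blocks of $G-V(C)$, handled via Tutte) propagates constancy across the whole component of $\mathscr B\GB$. The main obstacle is bookkeeping: making sure the circuit-elimination steps of Theorem~\ref{thm:BraceletFunctionPropriety} genuinely go through with only the hypotheses available here, and that the $2$-connectivity of $G$ (rather than mere connectivity) is invoked exactly where Tutte's theorem requires it.
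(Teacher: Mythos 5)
Your proposal follows the same overall architecture as the paper's proof: define $\chi$ by setting $\chi(B)=\mathsf{independent}$ iff $B\in\Ii(M)$, establish $\Cc(M)=\Cc(G,\Bb,\chi)$, and then invoke Theorem~\ref{thm:BraceletFunctionPropriety} to conclude that $\chi$ is proper. The reduction of properness to Theorem~\ref{thm:BraceletFunctionPropriety} is exactly right, and indeed once $\Cc(M)=\Cc(G,\Bb,\chi)$ is in hand that step is a one-line citation — you needn't rerun its circuit-elimination argument or Lemma~\ref{lem:everycyclecontainingChassamechivalue} at all.

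The genuine gap, however, lies precisely in the step you treat most lightly: showing that every circuit $X$ of $M$ with $\beta(X)=2$ induces one of the five named subgraphs, i.e.\ contains no pendant edges. You dismiss this with ``minimum degree $\ge 2$ after removing pendant trees,'' but for biased graphic matroids this is a substantive claim. The easy cases are when $X$ contains a theta, tight handcuff, or loose handcuff $T$: since $T$ is a circuit of $F\GB$, it is dependent in $M$, and minimality of $X$ forces $X=T$. The hard case is when $X$ properly contains a bracelet $C_1\cup C_2$ that is \emph{independent} in $M$ (equivalently $\chi(C_1\cup C_2)=\mathsf{independent}$) yet $X$ contains no $C_1$--$C_2$ path. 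Then $X$ consists of the bracelet plus pendant trees, and no appeal to $F\GB$-dependence rules this out — a bracelet together with pendant trees is independent in $F\GB$. You also cannot shortcut via $L\GB$: while $X$ is dependent in $L\GB$ and its only $L\GB$-circuit is the bracelet, the implication ``dependent in $L\GB$'' $\Rightarrow$ ``dependent in $M$'' runs the wrong way under the intermediate containment, so removing a pendant edge does not obviously leave a dependent set of $M$. The paper closes this case with a careful construction: pick a pendant edge $x$ at a degree-one vertex $v$, use $2$-connectivity of $G$ to build a spanning tree of $G-v$ extending $X\setminus\{e_1,e_2,x\}$ (for chosen $e_i\in C_i$), extract the resulting path $P$ giving a loose handcuff $C_1\cup P\cup C_2$ which is dependent in $M$, and then apply circuit elimination on $e_1$ to land in a set of cyclomatic number $1$, hence $L\GB$-independent, hence $M$-independent — a contradiction. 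This is where $2$-connectivity is actually load-bearing, not in the properness step (Theorem~\ref{thm:BraceletFunctionPropriety} only needs $G$ connected). So your proposal misplaces the crux and, as written, does not supply the argument that makes the theorem true.
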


\begin{proof}
Let $\chi$ be the bracelet function defined according to the independence or dependence of each bracelet in $M$; that is, for each bracelet $B$ of $G$, $\chi(B) = \mathsf{independent}$ if and only if $B \in \Ii(M)$. 

($\Cc(M)\subseteq\Cc(G,\Bb,\chi)$.)
Let $X \in \Cc(M)$. 
If $X$ forms a cycle in $G$ then $X \in \Bb \in \Cc(G,\Bb,\chi)$, so assume $X$ is not a cycle. 
Certainly then neither does $X$ contain a balanced cycle. 
By Lemma \ref{lem:a_circuit_has_at_most_two_components}, $\beta(X)\leq 2$. 
If $\beta(X)\leq1$ then $X$ would be independent in $L(G,\mc B)$ and so independent in $M$; thus $\beta(X)=2$. 
Therefore $G[X]$ contains a theta, a handcuff, or a bracelet. 
Suppose $T \subseteq X$ is the edge set of a theta or tight handcuff in $G$. 
Then $T$ is dependent in $F\GB$ and so dependent in $M$, while every proper subset of $T$ is independent in $L\GB$ and so independent in $M$. 
Thus $X=T$, and $X \in \Cc(G,\Bb,\chi)$. 
Now suppose $X$ contains a bracelet $C_1 \cup C_2$. 
Every proper subset of $C_1 \cup C_2$ is independent in $L\GB$ and so independent in $M$. 
Thus either $X = C_1 \cup C_2$ and $\chi(C_1 \cup C_2) = \mathsf{dependent}$ or $X$ properly contains $C_1 \cup C_2$ and $\chi(C_1 \cup C_2) = \mathsf{independent}$. 
In the first case, $X \in \Cc(G,\Bb,\chi)$. 
So assume the second case holds, and suppose for a contradiction that $X$ is not a loose handcuff. 
Since every loose handcuff in $G$ is dependent in $F\GB$, and so dependent in $M$, $X$ does not contain a loose handcuff. 
Therefore there is no path in $G[X]$ connecting $C_1$ and $C_2$. 
Let $e_1 \in C_1$ and $e_2 \in C_2$. 
Then $X \bs \{e_1,e_2\}$ forms a forest in $G$ with $C_1-e_1$ and $C_2-e_2$ in different components. 
Since $X$ properly contains $C_1\cup C_2$, there is an element $x \in  X-(C_1\cup C_2)$ such that $x$ has an end $v$ of degree 1 in $G[X]$. 
Because $G$ is 2-connected, $G-v$ is connected. 
Hence there is a set of edges $W$ such that $\br{X \cup W} - \{e_1, e_2, x\}$ is a tree spanning $G-v$ and $(X\cup W)-\{e_1,e_2\}$ is the edge set of a spanning tree of $G$ in which $v$ is a leaf.
Now $(X\cup W)-x$ contains a unique loose handcuff consisting of $C_1\cup C_2$ along with the path $P$ linking $C_1$ and $C_2$ in $X \cup W$. 
Since $C_1 \cup C_2 \cup P$ is dependent in $F\GB$, $C_1 \cup C_2 \cup P$ is dependent in $M$. 
Thus there is a circuit $X' \neq X$ of $M$ contained in $(X \cup P)- x$. 
By the circuit elimination axiom there is a circuit $X''\subseteq (X\cup X')-e_1\subseteq (X\cup P)-e_1$. 
But $\beta((X \cup P)-e_1)=1$, so $(X \cup P)-e_1$ is independent in $L\GB$. 
Thus $(X \cup P)-e_1$ is independent in $M$, a contradiction. 

($\Cc(G,\Bb,\chi) \subseteq \Cc(M)$.) Every circuit $X$ of $M(G, \Bb, \chi)$ is dependent in $M$:  If $X$ is the edge set of a balanced cycle, theta subgraph with no cycle in $\Bb$, or handcuff of $(G, \Bb)$ then $X$ is already dependent in $F(G, \Bb)$, and if $X$ is a bracelet with $\chi(X) = \mathsf{dependent}$ then this is true by the definition of $\chi$. Thus $X$ must include a circuit $X'$ of $M$, which as we have just shown is then a circuit of $M(G, \Bb, \chi)$. Since no circuit of $M(G, \Bb, \chi)$ is a proper subset of any other, we have $X' = X$ and therefore $X \in \Cc(M)$.

This completes the proof that $\Cc(G,\Bb,\chi) = \Cc(M)$.
By Theorem \ref{thm:BraceletFunctionPropriety}, $\chi$ is proper. 
\end{proof}

For convenience sake we record the following immediate consequence of Theorems 
\ref{fixed_2_thm:equivalences}
and \ref{P:BraceletChoice}. 

\begin{theorem} \label{thm:BG_with_2conngraph_is_QG}
{Let $M$ be a biased-graphic matroid with a 2-connected graph. Then $M$ is quasi-graphic. }
\end{theorem}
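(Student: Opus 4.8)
The plan is to combine the structural results already established for biased graphic matroids with the cryptomorphic description of quasi-graphic matroids via proper tripartitions (Theorem \ref{thm:equivalences}). First I would reduce to the connected case: since being quasi-graphic is preserved under direct sums, and a biased graphic matroid with a $2$-connected graph is connected, it suffices to treat a connected biased graphic matroid $M$ with a $2$-connected graph $G$. By Theorem \ref{thm:biased_graphic_matroids_come_from_tripartitions}, the circuits of $M$ are precisely $\Cc(G,\Bb,\Ll,\Ff)$ for some proper tripartition $(\Bb,\Ll,\Ff)$ of the cycles of $G$; that is, $M = M(G,\Bb,\Ll,\Ff)$ in the notation of Theorem \ref{thm:tripartitions_give_matroids}.

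Once $M$ is exhibited as $M(G,\Bb,\Ll,\Ff)$ with $(\Bb,\Ll,\Ff)$ proper, the implication $(2)\Rightarrow(3)$ of Theorem \ref{thm:equivalences} immediately gives that $M$ is quasi-graphic with $G$ as a framework. So the substantive content of the theorem is really already packaged inside Theorem \ref{thm:biased_graphic_matroids_come_from_tripartitions}, which in turn rests on Theorem \ref{P:BraceletChoice} (the bracelet-function description of biased graphic matroids with a $2$-connected graph) together with the propriety Theorem \ref{thm:BraceletFunctionPropriety} and the equivalence of bracelet functions and tripartitions in the proof of $(1)\Rightarrow(2)$ of Theorem \ref{thm:equivalences}. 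The proof I would write is therefore essentially a short chain of citations, with the one genuine step being the case analysis to handle a graph that is $2$-connected but where $M$ might a priori still be disconnected — but $2$-connectivity forces $|V(G)|\ge 2$ and connectedness of $G$, and a disconnected $M$ on a connected $G$ of the relevant shape is impossible since every component of $M$ lies in a component of $G$ by condition (3) in the definition of biased graphic; hence $M$ is connected.

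The main obstacle is not really a mathematical one here but making sure the hypothesis matches: Theorem \ref{thm:biased_graphic_matroids_come_from_tripartitions} requires $M$ to be \emph{connected} biased graphic with a $2$-connected graph, so I must verify connectivity of $M$ before invoking it. If $G$ is $2$-connected then $G$ is connected, and since every component of $M$ is contained in a component of $G$ (condition (3)), $M$ has only one component, i.e.\ $M$ is connected (treating the degenerate case $|E(G)|\le 1$ separately and trivially). With that in hand the citation chain closes.

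\begin{proof}[Proof of Theorem \ref{thm:BG_with_2conngraph_is_QG}]
Let $G$ be a $2$-connected graph for $M$. Then $G$ is connected, and since every component of $M$ is contained in a component of $G$, the matroid $M$ has a single component; that is, $M$ is connected. (If $|E(G)| \le 1$ then $M$ is trivially quasi-graphic, so we may assume $G$ has at least one edge.) By Theorem \ref{thm:biased_graphic_matroids_come_from_tripartitions}, there is a proper tripartition $(\Bb,\Ll,\Ff)$ of the cycles of $G$ such that the circuits of $M$ are precisely those in $\Cc(G,\Bb,\Ll,\Ff)$; equivalently, $M = M(G,\Bb,\Ll,\Ff)$ in the sense of Theorem \ref{thm:tripartitions_give_matroids}. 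By the implication $(2) \Rightarrow (3)$ of Theorem \ref{thm:equivalences}, $M$ is quasi-graphic, with $G$ as a framework.
\end{proof}
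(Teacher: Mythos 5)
Your proof follows the same outline as the paper's: cite Theorem \ref{thm:biased_graphic_matroids_come_from_tripartitions} to realise $M$ as $M(G,\Bb,\Ll,\Ff)$ for a proper tripartition, then apply the implication $(2)\Rightarrow(3)$ of Theorem \ref{thm:equivalences}. You rightly note that Theorem \ref{thm:biased_graphic_matroids_come_from_tripartitions} is stated for \emph{connected} $M$, a hypothesis the paper's own terse proof does not explicitly verify. Unfortunately, the verification you supply is incorrect. Condition (3) in the definition of a biased graphic matroid says that each component of $M$ is \emph{contained in} a component of $G$; it does not say the components of $M$ are in bijection with, or even bounded in number by, the components of $G$. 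Several components of $M$ can lie inside a single component of a connected $G$, so ``$G$ connected'' does not imply ``$M$ connected.'' Concretely, take $G$ to be a $4$-cycle with its unique cycle unbalanced. Then $G$ is $2$-connected, but $L\GB = F\GB = U_{4,4}$, so the only biased graphic matroid with this graph is $U_{4,4}$, which has four components, each trivially contained in the single component of $G$.

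The disconnected case is true but needs its own short argument; here is one. If $(A,B)$ is a $1$-separation of $M$, then since $\Ii(L\GB)\subseteq\Ii(M)\subseteq\Ii(F\GB)$ and $r_L(E)=r_F(E)$, submodularity of $r_L$ forces $(A,B)$ to also be a $1$-separation of $L\GB$ with $r(A)=r_L(A)$ and $r(B)=r_L(B)$. Using $2$-connectivity (each component of $G[A]$ meets $V(A)\cap V(B)$ in at least two vertices, and likewise for $G[B]$, so $|V(A)\cap V(B)|\geq c(A)+c(B)$), the lift-rank formula then forces both $G[A]$ and $G[B]$ to be balanced. Consequently $M|A$ is sandwiched between $L(G[A],\Bb_A)=M(G[A])$ and $F(G[A],\Bb_A)=M(G[A])$, so $M|A=M(G[A])$ is graphic, and similarly $M|B$; hence $M$ is a direct sum of graphic matroids and is itself graphic, in particular quasi-graphic. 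With the disconnected case discharged, the connected case runs exactly as you and the paper wrote.
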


\begin{proof}
Let $G$ be a 2-connected graph for $M$. 
By Theorem \ref{P:BraceletChoice}, $M=M(G,\Bb,\chi)$ for some proper bracelet function $\chi$, so by Theorem \ref{fixed_2_thm:equivalences}, $M$ is quasi-graphic. 
\end{proof}

{\color{black}
We will need the rank function of a connected biased-graphic matroid in terms of its credentials. 

\begin{proposition} \label{BGM_rank_when_graph_connected}
Let $M$ be a biased-graphic matroid with credentials $(G,\Bb,H)$, 
and let $X \subseteq E(M)$ where $H[X]$ is connected. The rank of $X$ in $M$ is $|Z|$, where $Z$ is a maximal unicyclic subgraph of $H[X]$ containing no balanced cycle. 
\end{proposition} 

\begin{proof}
The set $Z$ is independent in $L(G,\Bb)$, so independent in $M$. 
For each element $e \in X$, $H[Z \cup e]$ contains a 
balanced cycle, theta or handcuff and so is dependent in $F(H,\Bb)$, and so dependent in $M$. 
\end{proof}

Let $c(G)$ denote the number of components of the graph $G$. 
}

{\color{black}
\begin{proposition} \label{lem_conn_bias_gr_rank}
Let $M$ be a connected biased-graphic matroid with credentials $(G,\Bb,H)$. 
Assume that not every cycle of $G$ is in $\Bb$. 
Then the rank of $M$ is $|V(G)|-c(G)+1 = |V(H)|$. 
\end{proposition}

\begin{proof}
Let $C$ be an unbalanced cycle of $G$ and let $Z$ be a maximal unicyclic set of edges of $G$ containing $C$. 
Then $Z$ is a maximal unicyclic set of edges of $H$ containing no balanced cycle. 
As $H$ is connected, by Proposition \ref{BGM_rank_when_graph_connected} 
the rank of $M$ is equal to $|Z|$, which is equal to $|V(G)|-c(G)+1 = |V(H)|$. 
\end{proof}
}

{\color{black}
\begin{proposition} \label{lem:M3conn_implies_G2conn}
Let $M$ be a 3-connected biased-graphic matroid with graph $G$. 
Then $G$ is 2-connected.
\end{proposition}
}

\begin{proof} 
{\color{black}
Suppose to the contrary that $G$ is not 2-connected. 
Let $(G,\Bb,H)$ be credentials for $M$. 
Let $G_1, \ldots, G_n$ be the blocks of $G$. 
Then $G_1, \ldots, G_n$ are the blocks of $H$. 
Each block is unbalanced, else $M$ would not be connected. 
Consider the block-cut-point tree of $H$. 
Let $v$ be a vertex of block $G_1$ incident to a neighbouring block in the block-cut-point tree of $H$. 
Let $(A,B)$ be a separation of $H$ for which $V(A) \cap V(B) = \{v\}$. 
Then each of $H[A]$ and $H[B]$ are connected. 
Let $S \subseteq A$, and $T \subseteq B$, respectively, be maximal unicyclic subgraphs of $A$, resp.\ $B$, containing no balanced cycle. 
Then $|S| = |V(A)|$, $|T| = |V(B)|$. 
The separation $(A,B)$ is a separation of $M$, and by Propositions \ref{BGM_rank_when_graph_connected} and  \ref{lem_conn_bias_gr_rank}, 
\begin{align*}
r(A) + r(B) - r(M) 
&= |S| + |T| - |V(H)| \\ 
&= |V(A)| + |V(B)| - V(H) = 1 
\end{align*}
so $(A,B)$ is a 2-separation of $M$, contrary to the fact that $M$ is 3-connected. 
}
\end{proof}

{\color{black}
Proposition \ref{lem:M3conn_implies_G2conn} implies all 3-connected biased-graphic matroids are quasi-graphic: 

\begin{theorem} \label{thm:3conn_BG_is_QG}
Let $M$ be a 3-connected biased-graphic matroid. 
Then $M$ is quasi-graphic.
\end{theorem}

\begin{proof}
Let $G$ be a graph for $M$. 
By Proposition \ref{lem:M3conn_implies_G2conn}, $G$ is 2-connected. 
Thus by Theorems \ref{fixed_thm:equivalences} and \ref{P:BraceletChoice}, $M$ is quasi-graphic. 
\end{proof}
}

{\color{black}
By Theorems \ref{thm:allquasigraphicarebiasedgraphic} and \ref{thm:3conn_BG_is_QG}, 
the classes of 3-connected biased-graphic matroids and 3-connected quasi-graphic matroids coincide. 
In our next subsection (Subsection \ref{sec_link_and_loop_sums}), we shall see precisely how to construct those biased graphic matroids that are not quasi-graphic.
}

{\color{black}If $M(G,\Bb,\Ll,\Ff)$ has $\Ll$ degenerate, so $M(G,\Bb,\Ll,\Ff) = F(G,\Bb)$, then we say $(G,\Bb,\Ll,\Ff)$ is a \emph{frame representation} for $M$. 
Similarly, if $M(G,\Bb,\Ll,\Ff)$ has $\Ff$ degenerate, so $M(G,\Bb,\Ll,\Ff) = L(G,\Bb)$, then we say $(G,\Bb,\Ll,\Ff)$ is a \emph{lifted-graphic representation} for $M$. 
If $M$ is biased-graphic with a 2-connected graph $G$, then by Theorem 
\ref{thm:BG_with_2conngraph_is_QG},
$M=M(G,\Bb,\Ll,\Ff)$ for some proper tripartition of the cycles of $G$. 
If there is such a tripartition for $G$ with $\Ll$ (respectively, $\Ff$) degenerate, then we simply say that $G$ is a \emph{frame representation} (resp.\ \emph{lifted-graphic representation}) for $M$.
}
The following is the analogue of Theorem \ref{thm:quasigraphic_with_a_loop} for biased-graphic matroids. 

\begin{theorem} \label{thm:Gtwoconnectedwithloop}
Let $M$ be a biased-graphic matroid and let $G$ be a graph for $M$. If $G$ is 2-connected and has an unbalanced loop, then 
{\color{black}$G$ is either a lifted-graphic representation or a frame representation for $M$, so}
$M$ is either lifted-graphic or frame.
\end{theorem}

\begin{proof}
By Theorem \ref{thm:BG_with_2conngraph_is_QG} 
the circuits of $M$ are given by $\Cc(G,\Bb,\Ll,\Ff)$ for some proper tripartition of the cycles of $G$. 
Since every cycle in $\Ll$ meets every cycle in $\Ff$, one of $\Ll$ or $\Ff$ is degenerate. 
\end{proof}

{\color{black}
For the sake of completeness, we provide the general rank function for a quasi-graphic matroid in terms of its credentials. 
While $c(G)$ denotes the number of components of the graph $G$, $b(G)$ denotes the number of balanced components of $G$. 

\begin{proposition} \label{lem_general_rank_function}
Let $M$ be a biased-graphic matroid with credentials $(G,\Bb,H)$. 
Let 
$(G_1, \ldots, G_m)$ be a partition of the set of connected components of $G$ into subgraphs for which distinct connected components of $G$ are contained in the same subgraph $G_j$ only if there is a component of $M$ with non-empty intersection with each of them. 
Then the rank of $M$ is 
\[ |V(H)| - b(H) 
= |V(G)| - c(G) + u \] 
where 
$u$ is the number of subgraphs in the partition $(G_1, \ldots, G_m)$ that are unbalanced. 
\end{proposition}

\begin{proof}
Let $Z$ be a maximal set of elements of $M$ subject to $Z$ inducing in each connected component $H_i$ of $H$ a subgraph containing at most one cycle and no balanced cycle. 
Then $|Z| = |V(H)|-b(H)$. 
Let $L$ be the direct sum of the matroids $L(G_j,\Bb_j)$, $j \in \{1,\ldots, m\}$, where $\Bb_j$ is the set of cycles of $G_j$ that are circuits of $M$. 
For each $j$, let $Z_j = Z \cap E(G_j)$. 
Then $Z_j$ is a basis of $L(G_j,\Bb_j)$. 
Since $Z$ is a basis of $L$, and 
\[ \Ii(L(G,\Bb)) \subseteq \Ii(L) \subseteq \Ii(M) \] 
$Z$ is independent in $M$. 
For each element $e \in E(M)-Z$, $Z \cup e$ is dependent in $F(H,\Bb)$ and so dependent in $M$. 
Thus $Z$ is a basis for $M$, and so the rank of $M$ is 
\begin{align*}
|Z| = |V(H)| - b(H) 
&= \sum_{i=1}^m \br{|V(G_i)| - c(G_i) + l(G_i)} \\
&= |V(G)| - c(G) + u
\qedhere 
\end{align*}
\end{proof}
}

\subsection{Link-sums and loop-sums} \label{sec_link_and_loop_sums}

None of the classes of frame, lifted-graphic, quasi-graphic, nor biased-graphic matroids are closed under 2-sums. 
However, each is closed under 2-sums with a graphic matroid. 
Moreover, there is a natural 2-sum of biased graphs that corresponds to a 2-sum of matroids that, for the classes of lifted-graphic, frame, and biased-graphic matroids, does always yield a matroid that remains in the class containing the summands. 

Let $M_1$ and $M_2$ be two biased-graphic matroids. 
Let $G_1$ be a graph for $M_1$ and let $G_2$ be a graph for $M_2$, and assume $E(M_1) \cap E(M_2) = \{e\}$. 
If $e$ is a link in $G_1$ and every cycle of $G_2$ is a circuit of $M_2$ 
(that is, 
$M_2$ is the cycle matroid of $G$) 
then the 2-sum of $M_1$ and $M_2$ on basepoint $e$ may be realised in the graphs as follows.
Suppose $u_1,v_1$ are the endpoints of $e$ in $G_1$ and $u_2, v_2$ are the endpoints of $e$ in $G_2$.
The \emph{link-sum} of $G_1$ and $G_2$ \emph{on} $e$, denoted $G_1 \oplus_2^e G_2$, is the graph obtained by identifying $u_1$ with $u_2$ and identifying $v_1$ with $v_2$, then deleting $e$. 
It is straightforward to check that $G_1 \oplus_2^e G_2$ is a graph for the matroid $M_1 \oplus_2^e M_2$ obtained as the 2-sum of $M_1$ and $M_2$ on basepoint $e$.

Now suppose that $e$ is an unbalanced loop in each of $G_1$ and $G_2$.
The \emph{loop-sum} of $G_1$ and $G_2$ \emph{on} $e$, denoted $G_1 \oplus_2^e G_2$, is obtained by identifying the end of $e$ in each of $G_1$ and $G_2$, then deleting $e$. 
Again it is straightforward to check that $G_1 \oplus_2^e G_2$ is a graph for the 2-sum of $M_1$ and $M_2$ taken on basepoint $e$. 
It is easy to see that if $G_1$ and $G_2$ represent $M_1$ and $M_2$ as frame matroids, then the loop-sum of $G_1$ and $G_2$ is a frame representation of the 2-sum of $M_1$ and $M_2$ on $e$. 
Similarly, if $G_1$ and $G_2$ represent $M_1$ and $M_2$ as lifted-graphic matroids, then the loop-sum of $G_1$ and $G_2$ is a lifted-graphic representation of the 2-sum of $M_1$ and $M_2$ on $e$. 
Thus each of the classes of frame and lifted-graphic matroids are closed under loop-sums. 
The class of quasi-graphic matroids, however, is not closed under loop-sums. 

\begin{proposition} \label{lem:twosumframeandliftnotqg}
Let $M$ be a frame matroid that is not lifted-graphic and let $N$ be a lifted-graphic matroid that is not frame. Assume $E(M) \cap E(N) = \{e\}$. Let $G$ be a graph for $M$ and let $H$ be a graph for $N$, and assume that $e$ is an unbalanced loop in each of $G$ and $H$.
Then $M \oplus_2^e N$ is biased-graphic but not quasi-graphic. 
\end{proposition}

\begin{proof} 
It is straightforward to check that the loop-sum $G \oplus_2^e H$ is a graph for $M \oplus_2^e N$, so $M \oplus_2^e N$ is biased-graphic.
Let $C$ be a circuit of $M$ containing $e$ that forms a loose handcuff in $G$.
Let $C'$ be a circuit of $N$ containing $e$ that forms a bracelet in $H$.
Then $(C-e) \cup (C'-e)$ is a circuit of $M \oplus_2^e N$ that does not have the form of any of the circuit-subgraphs of Lemma \ref{lem:GGWquasicircuits}, so $M \oplus_2^e N$ is not quasi-graphic. 
\end{proof}

{
Let $G$ be a graph for a connected biased-graphic matroid $M$, and suppose $G$ is not 2-connected. 
Since $M$ is connected, 
{\color{black}$M$ has credentials $(G,\Bb,H)$ where $H$ is connected. 
As $G$ is not 2-connected, neither is $H$.}
Let $v$ be a cut vertex of 
{\color{black}$H$}, and let $(A,B)$ be a partition of 
{\color{black}$E(H)$} so that $V(A) \cap V(B) = \{v\}$. 
{\color{black}The argument in the proof of Proposition \ref{lem:M3conn_implies_G2conn} shows that $(A,B)$ is a 2-separation of $M$.}
Let $e$ be an element not in $E(M)$. 
Let $H_A$ be the graph obtained from $H[A]$ by adding a loop incident to $v$ labelled $e$, and let $H_B$ be the graph obtained from $H[B]$ obtained by adding a loop labelled $e$ incident to $v$. 
Then $H_A$ is a graph for the biased-graphic matroid $M_A$ on $A \cup \{e\}$ whose circuits are the circuits of $M$ that are contained in $A$ together with elements of the set 
\[ 
\{\br{C-B} \cup e : C \in \Cc(M), C \cap B \neq \emptyset \}.
\]
The graph $H_B$ is a graph for the analogously defined matroid $M_B$ on $B \cup \{e\}$. 
Evidently, $H=H_A \oplus_2^e H_B$ and $M = M_A \oplus_2^e M_B$. 
More generally, let $K$ be a block of $H$, and let $v_1, \ldots, v_k$ be the cut vertices of $H$ in $K$. 
For each cut vertex $v_i$ there is a 2-separation $(A_i, B_i)$ of $M$, where $E(K) \subseteq A_i$.  
Let $M_K$ be the matroid in the 2-sum decomposition of $M$ corresponding to this star of 2-separations. 
The matroid $M_K$ has an extra element $e_i$ for each cut vertex $v_i$ ($i \in \{1, \ldots, k\}$) of $H$ in $K$. 
Let $K^+$ be the graph obtained from $K$ by adding, for each $i \in \{1, \ldots, k\}$, $e_i$ as a loop incident to $v_i$. 
Then $K^+$ is a graph for the biased-graphic matroid $M_K$ on $E(K) \cup \{e_1, \ldots, e_k\}$ whose circuits are 
the elements of the set 
\[ \{ \br{C \cap E(K)} \cup \{e_i : C \cap B_i \neq \emptyset \} : C \in \Cc(M) \}.\]
Moreover, in this way each block $K$ of $H$ corresponds to a matroid $M_K$ in the 2-sum decomposition of $M$, and if $K$ and $J$ are two blocks of $H$ with $V(K) \cap V(J) = \{v_i\}$, then the loop sum $K^+ \oplus_2^{e_i} J^+$ yields a graph for the matroid $M_K \oplus_2^{e_i} M_J$. 
}

Equipped with this simple decomposition tool, we may now prove the following. 

\begin{proposition} \label{thm:blocks_of_G_are_frame_or_lift}
{\color{black}
Let $M$ be a biased-graphic matroid, 
with credentials $(G,\Bb,H)$. 
If $H'$ is a connected component of $H$ that is not 2-connected and $K, K'$ are two blocks of $H'$ sharing a vertex, both containing elements that are contained in a common component of $M$, then 
the restriction of $H'$ to each of $K$ and $K'$ is a lifted-graphic or frame representation for the restriction of $M$ to the elements of that block.}
\end{proposition}

\begin{proof} 
{\color{black}Each of $K$, $K'$}
is either 2-connected or has at most two vertices. 
{\color{black}Every graph for a matroid that has at most two vertices is either a lifted-graphic or a frame representation for the matroid,}
so assume $|V(K)| \geq 3$.
{\color{black}Let $v$ be the vertex in $V(K) \cap V(K')$.} 
The matroid $M_K$ is biased-graphic with graph $K^+$ as defined above; $K^+$ has a loop 
{\color{black}incident to $v$.} 
Thus by Theorem \ref{thm:Gtwoconnectedwithloop}, 
{\color{black}the representation $K^+$ for}
$M_K$ is either lifted-graphic or frame
{\color{black}Similarly, the representation $K'^+$ for $M_{K'}$ is lifted-graphic or frame.}
\end{proof}

{\color{black}
Theorem \ref{thm:blocks_of_G_are_frame_or_lift} has the following immediate corollary. 

\begin{corollary} \label{frame_or_lift_reps}
Let $M$ be a connected biased graphic matroid that is not 3-connected and let $G$ be a connected graph for $M$. 
Assume $G$ has more than one block, and let $B$ be a block of $G$. 
The representation of the restriction of $M$ to the elements of $B$ given by $G[B]$ is either a frame representation or a lifted-graphic representation. 
\end{corollary} 
}

{\color{black}
\subsection{The class of biased-graphic matroids is closed under minors} \label{sec_BGM_minors}

We can now show that the class of biased-graphic matroids is closed under minors. 
Just as for each of the classes of 
graphic, 
lifted-graphic, 
frame, and 
quasi-graphic 
matroids, corresponding minor operations in graph representations yield graph representations of minors. 
We observe the following widely-accepted conventions to ensure that this is so, and add one more. 
\begin{itemize} 
\item When $M$ is the cycle matroid of the graph $G$, or $M$ is lifted graphic, or frame, or quasi-graphic, with graph $G$, and $e$ is a loop of $M$ (and so a balanced loop in $G$), we contract $e$ from $G$ by deleting $e$. 
\item When $M=L(G,\Bb)$ is lifted-graphic represented by the biased graph $(G,\Bb)$ and $e$ is an unbalanced loop in $G$ --- equivalently, if $M=M(G,\Bb,\Ll,\Ff)$ is quasi-graphic with framework $G$, and $e$ is a loop in $\Ll$ --- we contract $e$ from $G$ by deleting $e$ and declaring all cycles balanced. 
\item When $M=F(G,\Bb)$ is a frame matroid represented by $(G,\Bb)$ and $e$ is an unbalanced loop in $G$ --- equivalently, $M=M(G,\Bb,\Ll,\Ff)$ and $e$ is a loop in $\Ff$ --- we contract $e$ from $G$ by deleting $e$, declaring all unbalanced loops adjacent to $e$ to be balanced, and replacing each non-loop edge sharing an endpoint with $e$ with an unbalanced loop incident to its other endpoint (adding each such loop to the set $\Ff$). 
\end{itemize} 
Just as a loop in a graph representation for a matroid may or may not represent a loop of the matroid, 
a bridge of a graph may or may not represent a coloop of the matroid, according to whether or not the bridge is or is not contained in loose handcuffs induced by a circuit of the matroid. 
Just as when contracting a loop in a graph representation of a matroid we apply differing contraction operations depending on whether the loop is a loop of the matroid or not (and on whether the graph is a lifted-graphic or frame representation), 
we require differing deletion operations depending upon whether or not a bridge represents a coloop of the matroid. 
Thus to the above list, we add the following convention. 
\begin{itemize} 
\item When $M$ is a biased-graphic matroid with credentials $(G,\Bb,H)$, and $e$ is a coloop of $M$, we delete $e$ from $H$ by contracting $e$. 
\end{itemize} 
The following theorem justifies this convention. 
It says that with these conventions, if $M$ is a biased-graphic matroid and $G$ is graph for $M$ 
satisfying conditions 
\BGone, \BGtwo, and \BGthree\ 
of the definition, 
and $e$ is an element of $M$, 
then graphs for $M \del e$ and $M/e$ 
satisfying conditions 
\BGone, \BGtwo, and \BGthree\ 
are obtained by applying the corresponding minor operation in $G$. 
Given a biased graph $(G,\Bb)$ and an edge $e$ of $G$, let $\Bb_{G/e}$ be the collection of cycles of $G/e$ defined by $\Bb_{G/e} = \{C : C \in \Bb$ or $C \cup e \in \Bb\}$ provided $(G,\Bb)$ is not a lifted-graphic representation and $e$ is not an unbalanced loop; 
if $(G,\Bb)$ is a lifted-graphic representation and $e$ is an unbalanced loop, then define $\Bb_{G/e}$ to be the collection of all cycles of $G$. 
Let $\Bb_{G \del e}$ denote the collection of balanced cycles of $G$ that do not contain $e$. 

\begin{theorem} \label{BGM_minor-closed_1}
Let $M$ be a biased-graphic matroid with credentials $(G,\Bb,H)$, and let $e \in E(M)$. 
Then $(G/e, \Bb_{G/e}, H/e)$ are credentials for $M/e$, and 
provided $e$ is not a coloop of $M$, 
$(G \del e, \Bb_{G \del e}, H \del e)$ are credentials for $M \del e$. 
If $e$ is a coloop of $M$, then $(G \del e, \Bb_{G \del e}, H/e)$ are credentials for $M \del e$. 
\end{theorem} 

\begin{proof}
Let $(G,\Bb,H)$ be credentials for $M$, and let $e \in E(M)$. 
First consider contraction. 
The collection $\Bb_{G/e}$
is equal to the set $\{ C : C$ is a circuit of $M/e$ and a cycle of $G/e\}$, and satisfies the theta property because $\Bb$ does (a theta-property-violating set of cycles in $\Bb_{G/e}$ would imply the existence of a theta-property-violating set in $\Bb$). 
We claim that $(G/e, \Bb_{G/e}, H/e)$ are credentials for $M/e$. 
As $L(H,\Bb)/e = L(H/e,\Bb_{G/e})$ and $F(H,\Bb)/e = F(H/e,\Bb_{G/e})$, the statement 
$$\Ii(L\br{H/e,\Bb_{G/e}}) \subseteq \Ii\br{M/e} \subseteq \Ii\br{F(H/e,\Bb_{G/e})}$$
follows from the statement 
$\Ii(L\br{H,\Bb}) \subseteq \Ii\br{M} \subseteq \Ii\br{F(H,\Bb)}$. 
The contraction of $e$ in $G$ and $H$ does not produce an additional component of either graph, so the fact that $H$ is obtained by vertex identification from $G$ and that every component of $M$ is contained in a component of $H$ implies that $H/e$ is obtained by vertex identification from $G/e$, and that every component of $M/e$ is contained in a component of $H/e$. 
Thus 
\BGone, \BGtwo, and \BGthree\
hold for $M/e$ and $H/e$. 

Now consider deletion. 
First assume $e$ is not a coloop of $M$. 
We show that $(G \del e, \Bb_{G\del e}, H \del e)$ are credentials for $M \del e$. 
The set $\Bb_{G\del e}$ satisfies the theta property, since $\Bb$ does, and is equal to the set $\{C : C$ is a circuit of $M \del e$ and a cycle of $H \del e\}$. 
Since $L(H,\Bb) \del e = L(H \del e, \Bb_{G \del e} )$, and 
$F(H,\Bb) \del e = F(H \del e, \Bb_{G\del e})$, 
the statement 
\[\Ii(L(H \del e,\Bb_{G\del e}) \subseteq \Ii(M \del e) \subseteq \Ii(F(H \del e,\Bb_{G\del e}))\]
follows from the fact that $\Ii(L(H,\Bb) \subseteq \Ii(M) \subseteq \Ii(F(H,\Bb))$. 

Note that $e$ is a bridge of $G$ if and only if $e$ is a bridge of $H$. 
If $e$ is not a bridge, 
then $H \del e$ has no new component, so 
each component of $M \del e$ is contained in a component of $H \del e$. 
Thus 
\BGone, \BGtwo, and \BGthree\
hold for $M \del e$ and $H \del e$. 

But suppose $e$ is a bridge. 
Let $H_1$, $H_2$ be the two new components of $H$ resulting from the deletion of $e$. 
As $e$ is not a coloop there is a circuit of $M$ containing $e$ along with elements of $H_1$ and $H_2$. 
This circuit must be a loose handcuff in $H$. 
Let $u,v$ be the endpoints of $e$, say with $u \in V(H_1)$ and $v \in V(H_2)$. 
By the argument in the proof of Proposition \ref{lem:M3conn_implies_G2conn} and the discussion of 2-sums and loop-sums in Subsection \ref{sec_link_and_loop_sums}, 
there are disjoint subsets $A, B$ of $E(M) - e$ such that $A \cup B \cup e = E(M)$ and each of $(A, B \cup e)$ and $(A \cup e, B)$ are 2-separations of $M$, where $V(A) \cap V(B \cup e) = u$ and $V(A \cup e) \cap V(B) = v$. 
Thus $H$ may be expressed as a pair of loop-sums $H_A \oplus_2 H_{C_e} \oplus_2 H_B$, for appropriately defined biased graphs $H_A$, $H_{C_e}$, $H_B$ (where $H_{C_e}$ is a loose handcuff consisting of $e$ as a link between a pair of loops), 
and $M$ may be expressed as a pair of 2-sums $M_A^+ \oplus_2 C_e \oplus_2 M_B^+$, where $C_e$ is a 3-circuit containing $e$ along with two basepoints for the 2-sums. 
Thus each of $A$ and $B$ are contained in separate components of $M \del e$, 
so $(G \del e, \Bb_{G \del e}, H \del e)$ are credentials for $M \del e$. 

Now assume that $e$ is a coloop of $M$. 
Then $M \del e = M/e$, so by the first part of this proof, $(G \del e, \Bb_{G \del e}, H/e)$ are credentials for $M \del e$. 
\end{proof}

By Theorem \ref{BGM_minor-closed_1}, the class of biased-graphic matroids is closed under minors, and corresponding minor operations in graph representations yield graph representations of minors. 
}

\subsection{Quasi-graphic and biased-graphic matroids} 

We now investigate the relationship between the classes of biased-graphic matroids and quasi-graphic matroids,
and characterise those biased-graphic matroids that are not quasi-graphic. 
It turns out that for matroids that are 
{\color{black}connected but} 
not 3-connected, the class of quasi-graphic matroids is nothing more than the union of the classes of lifted-graphic and frame matroids. 
While the class of biased-graphic matroids that are not 3-connected is larger than this, we will see that 
the class admits a straightforward characterisation in terms of lifted-graphic and frame summands in a 2-sum decomposition. 

{\color{black}We have already seen 
(Theorem \ref{thm:BG_with_2conngraph_is_QG}) 
that biased-graphic matroids with 2-connected graphs are quasi-graphic.}
We may now characterise the relationship between the classes quasi-graphic matroids and biased-graphic matroids. 

\begin{theorem} \label{thm:struc_of_qgraph}
A connected matroid $M$ is quasi-graphic if and only if $M$ is a frame matroid, a lifted-graphic matroid, or a biased-graphic matroid with a 2-connected graph. 
\end{theorem}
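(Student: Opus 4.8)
The strategy is to prove each direction separately, using the structural results already established for both classes. The forward direction ($M$ quasi-graphic $\Rightarrow$ one of the three options) is where the work lies; the reverse direction is essentially immediate from earlier theorems.

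First I would dispatch the reverse direction. Suppose $M$ is a frame matroid; then $M$ is quasi-graphic because frame matroids form a subclass of quasi-graphic matroids (a biased graph representing $M$ gives a framework, as discussed in Section~\ref{sec:quasigraphicintro}), and similarly for lifted-graphic matroids. If instead $M$ is a biased graphic matroid with a $2$-connected graph, then $M$ is quasi-graphic by Theorem~\ref{thm:BG_with_2conngraph_is_QG}. So in all three cases $M$ is quasi-graphic.

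For the forward direction, let $M$ be a connected quasi-graphic matroid. By Corollary~\ref{thm:liftedgraphic_has_connected_graph}, $M$ has a connected framework $G$. If $G$ is $2$-connected, then since $M$ is quasi-graphic it is biased graphic (Theorem~\ref{thm:allquasigraphicarebiasedgraphic}), and we have a connected framework; by Theorem~\ref{thm:equivalences} and Corollary~\ref{thm:liftedgraphic_has_connected_graph} we may in fact take a connected framework, and if that framework happens to be $2$-connected we land in the third case. The remaining possibility is that every framework for $M$ fails to be $2$-connected — more precisely, that $G$ is connected but not $2$-connected. In that case I would invoke Theorem~\ref{thm:blocks_of_G_are_frame_or_lift}, noting that $M$ is biased graphic (Theorem~\ref{thm:allquasigraphicarebiasedgraphic}) with graph $G$: the restriction of $M$ to each block of $G$ is either lifted-graphic or frame, and $M$ decomposes as an iterated $2$-sum (via the loop-sum construction preceding Theorem~\ref{thm:blocks_of_G_are_frame_or_lift}) of these block-restrictions. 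The key point then is that for quasi-graphic matroids — unlike general biased graphic matroids — a genuine $2$-sum decomposition forces $M$ itself to be lifted-graphic or frame: one must argue that a $2$-sum of a frame matroid with a lifted-graphic matroid (when neither is both) is not quasi-graphic, which is exactly the obstruction exhibited by the loop-sum example just before this theorem. Thus if $M$ is quasi-graphic and has no $2$-connected framework, its block decomposition cannot mix frame and lifted-graphic pieces nontrivially, so $M$ is frame or lifted-graphic.

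The main obstacle is the last step: ruling out, for a quasi-graphic $M$ whose frameworks are never $2$-connected, a block decomposition that genuinely combines a frame summand with a lifted-graphic summand. One would want to show that such a combination produces a circuit violating condition~(4) of the framework definition — concretely, a circuit whose induced subgraph has three or more components — mirroring the loop-sum example where $(C-e)\cup(C'-e)$ fails to be any of the circuit-subgraphs of Lemma~\ref{lem:GGWquasicircuits}. Care is needed because $M$ might a priori have a different, better framework than the one obtained block-by-block; here the non-degeneracy/connectivity results (Theorem~\ref{thm:connected_QG_connected_G}, Theorem~\ref{thm:quasigraphic_with_a_loop}) are used to pin down that if $M$ were quasi-graphic and neither frame nor lifted-graphic, any framework would have to be $2$-connected, contradicting the standing assumption.
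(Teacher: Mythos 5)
Your proof is correct, and while it follows the paper's overall skeleton (both directions dispatched via Theorem~\ref{thm:allquasigraphicarebiasedgraphic} and Theorem~\ref{thm:BG_with_2conngraph_is_QG}, then splitting on whether a connected framework is $2$-connected), you arrive at a cleaner justification for the key step. The paper handles the remaining case---a connected quasi-graphic $M$ whose framework $G$ is not $2$-connected---by appealing to Theorem~\ref{thm:blocks_of_G_are_frame_or_lift} and then simply asserting, without further argument, that such a matroid is quasi-graphic if and only if the block restrictions are all frame or all lifted-graphic. You initially gesture at that same block-decomposition route (and the loop-sum obstruction), but then identify the more direct argument: by Theorem~\ref{thm:equivalences}, $M=M(G,\Bb,\Ll,\Ff)$ for a proper tripartition of the framework's cycles; if $M$ is neither frame nor lifted-graphic then neither $\Ll$ nor $\Ff$ is degenerate (by the observation closing Section~2), and Theorem~\ref{thm:connected_QG_connected_G}(2) then forces $G$ to be $2$-connected, landing $M$ in the third case. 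This bypasses the block decomposition and the unproved ``iff'' assertion entirely, and resolves the implicit quantifier ambiguity in the paper's reduction (``a graph that is not $2$-connected'') by working directly with an arbitrary framework. You could tighten the writeup by cutting the block-decomposition and broken-handcuff discussion, which in your final argument plays no role, and by stating plainly that non-degeneracy of both $\Ll$ and $\Ff$ is exactly equivalent to $M$ being neither frame nor lifted-graphic.
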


\begin{proof}
By Theorem \ref{thm:allquasigraphicarebiasedgraphic} all quasi-graphic matroids are biased-graphic. 
By Theorem \ref{thm:BG_with_2conngraph_is_QG} all biased-graphic matroids with a 2-connected graph are quasi-graphic. 
Thus we just need show that if $M$ is a connected biased-graphic matroid with a graph that is not 2-connected, then $M$ is quasi-graphic only if $M$ is frame or lifted-graphic. 
So let $M$ be a connected biased-graphic matroid with a graph $G$ that is not 2-connected, with proper tripartition $(\Bb,\Ll,\Ff)$. 
By 
{\color{black}Corollary \ref{frame_or_lift_reps} and the discussion prior}, 
each block $H$ of $G$ has a corresponding matroid $M_H$ in a 2-sum decomposition of $M$ that is either lifted-graphic or frame, and the 2-sums corresponding to cut vertices between two blocks are loop-sums. 
Since the class of frame matroids (resp.\ lifted-graphic matroids) is closed under loop-sums, if the restriction of $M$ to each block is frame (resp.\ lifted-graphic) then $M$ is frame (resp.\ lifted-graphic). 
Otherwise, there is a pair of blocks $H$, $H'$ such that the 2-sum of the matroids $M_H$ and $M_{H'}$ occurs in a 2-sum decomposition of $M$, where $M_H$ is lifted-graphic but not frame and $M_{H'}$ is frame but not lifted-graphic. 
By Proposition \ref{lem:twosumframeandliftnotqg}, this 2-sum is not quasi-graphic. 
\end{proof}

We can also characterise quasi-graphic matroids within the class of biased-graphic matroids in terms of matroid connectivity. 

\begin{theorem} 
Let $\Qq$ be the class of matroids consisting of the following sets of matroids, closed under direct sums: 
\begin{enumerate} 
\item 3-connected biased-graphic matroids, and 
\item biased-graphic matroids for which a 2-sum decomposition satisfies one of the following: 
\begin{enumerate}
\item there is exactly one 3-connected biased-graphic matroid while all remaining summands are graphic,  
\item every summand is lifted-graphic, or 
\item every summand is frame.  
\end{enumerate} 
\end{enumerate} 
Then $\Qq$ is the class of quasi-graphic matroids. 
\end{theorem}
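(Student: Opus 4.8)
The plan is to translate the structure theorem~\ref{thm:struc_of_qgraph}---a connected matroid is quasi-graphic if and only if it is frame, lifted-graphic, or biased graphic with a $2$-connected graph---into the language of $2$-sum decompositions. Both $\Qq$ and the class of quasi-graphic matroids are closed under direct sums, and a matroid is quasi-graphic precisely when each of its components is, so it suffices to fix a connected matroid $M$ and prove $M\in\Qq$ if and only if $M$ is quasi-graphic. By a \emph{$2$-sum decomposition} I mean the canonical tree decomposition of $M$ into $3$-connected matroids, circuits, and cocircuits (Cunningham--Edmonds), whose parts are determined up to isomorphism; I use freely that each part is a minor of $M$, that each circuit or cocircuit part is graphic, that a $3$-connected matroid is its own only part, and that the classes of frame, lifted-graphic, and quasi-graphic matroids are minor-closed with every graphic matroid both frame and lifted-graphic.

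For the forward direction, let $M$ be connected and quasi-graphic; by Theorem~\ref{thm:allquasigraphicarebiasedgraphic} it is biased graphic. If $M$ is $3$-connected it is a $3$-connected biased graphic matroid, hence in $\Qq$. Otherwise Theorem~\ref{thm:struc_of_qgraph} applies. If $M$ is frame then every part of its $2$-sum decomposition is a minor of $M$, hence frame, so condition~(c) holds; if $M$ is lifted-graphic, condition~(b) holds by the same argument. If $M$ is biased graphic with a $2$-connected graph but is neither frame nor lifted-graphic, then by Theorem~\ref{thm:equivalences} $M=M(G,\Bb,\Ll,\Ff)$ with $G$ $2$-connected and neither $\Ll$ nor $\Ff$ degenerate, so by Theorem~\ref{lem:M2conn_G2conn} $M$ is obtained by $2$-sums of a single $3$-connected matroid $M_0$ of this same form with graphic matroids; here $M_0$ is quasi-graphic, hence biased graphic, and it is not graphic because it carries a non-degenerate tripartition, while every other part is graphic, so condition~(a) holds with $M_0$ the exceptional summand. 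In all cases $M\in\Qq$.

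For the reverse direction, let $M$ be a connected matroid in $\Qq$. If $M$ is a $3$-connected biased graphic matroid, then any graph $G$ for $M$ is $2$-connected by Lemma~\ref{lem:3connrankandG2conn}, so $M$ is quasi-graphic by Theorem~\ref{thm:BG_with_2conngraph_is_QG}. Otherwise $M$ is biased graphic and its $2$-sum decomposition satisfies one of (a), (b), (c); fix a graph $G$ for $M$. If $G$ is $2$-connected, $M$ is quasi-graphic by Theorem~\ref{thm:BG_with_2conngraph_is_QG}, so assume not. Decomposing $M$ along the cut vertices of $G$ realises it as an iterated loop-sum $M=M_{A_1}\oplus_2\cdots\oplus_2 M_{A_k}$ in which each $M_{A_i}$ is biased graphic with a $2$-connected graph carrying an unbalanced loop, the union of the $2$-sum decompositions of the $M_{A_i}$ is the $2$-sum decomposition of $M$, and $M|A_i$ equals $M_{A_i}$ with the loop deleted; by Theorem~\ref{thm:Gtwoconnectedwithloop} each $M|A_i$ is frame or lifted-graphic. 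By Theorem~\ref{thm:blocks_of_G_are_frame_or_lift} and the proof of Theorem~\ref{thm:struc_of_qgraph}, $M$ is quasi-graphic as soon as $M|A_i$ is frame for all $i$, or $M|A_i$ is lifted-graphic for all $i$. If neither holds, then some $M|A_i$ is frame but not lifted-graphic and some $M|A_j$ is lifted-graphic but not frame, and I will show that then the $2$-sum decomposition of $M$ has a part that is frame but not lifted-graphic and a part that is lifted-graphic but not frame. This contradicts (b) (because of the first part) and (c) (because of the second), and it contradicts (a) as well, since in case~(a) all parts but one are graphic---hence both frame and lifted-graphic---and a single matroid $P$ cannot be both frame-but-not-lifted-graphic and lifted-graphic-but-not-frame. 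Hence $M$ is quasi-graphic.

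The crux, and the step I expect to be the main obstacle, is the claim just made: that a biased graphic matroid $N$ with a $2$-connected graph carrying an unbalanced loop that is frame but not lifted-graphic has a Cunningham--Edmonds part that is again frame but not lifted-graphic. Since being not lifted-graphic is not a minor-closed property, this needs an argument. Here the plan is to observe first that, by Zaslavsky's theory, $F(G_N,\Bb_N)=L(G_N,\Bb_N)$ whenever $(G_N,\Bb_N)$ has no two vertex-disjoint unbalanced cycles, so $N$ not lifted-graphic forces its graph to contain such a pair---an independent bracelet of $N$. One then tracks this bracelet through the graph $2$-separations that build the Cunningham--Edmonds decomposition---handling carefully the case in which a separation splits the two cycles of the bracelet, where one instead follows an unbalanced cycle together with the virtual edge---and concludes that some $3$-connected part inherits two vertex-disjoint unbalanced cycles, and so is frame but not lifted-graphic. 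The symmetric statement, with frame and lifted-graphic interchanged and tight handcuffs playing the role of bracelets, is proved the same way. With this reduction in hand, the forward and reverse directions above complete the proof.
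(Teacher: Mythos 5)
Your forward direction is correct and matches the paper's: you apply Theorem~\ref{thm:struc_of_qgraph} and then split into the frame, lifted-graphic, and biased-graphic-with-2-connected-graph cases, using Theorem~\ref{lem:M2conn_G2conn} in the last case. The divergence from the paper is in your reverse direction, and that is where there is a genuine gap.

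The gap is in the ``crux'' you flag yourself. You want to show that a $3$-connected Cunningham--Edmonds part of a matroid $N$ that is frame but not lifted-graphic is itself frame but not lifted-graphic. Your plan is to locate an independent bracelet in a biased graph for $N$ (sound, via Zaslavsky: $F\GB=L\GB$ iff $\GB$ has no two vertex-disjoint unbalanced cycles), track it through the $2$-separations, and conclude that some $3$-connected part $P$ has a biased graph $(G',\Bb')$ with a bracelet, ``and so is frame but not lifted-graphic.'' That last inference is not sound as stated. Knowing that $P=F(G',\Bb')$ with $(G',\Bb')$ containing a bracelet gives $F(G',\Bb')\neq L(G',\Bb')$, but it does not preclude $P$ from being lifted-graphic via some \emph{other} biased graph; the classes of frame and lifted-graphic matroids overlap nontrivially. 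You would need a uniqueness-of-representation input (available in some form for sufficiently connected frame matroids, but not supplied here) or a structural argument that rules out the alternate lifted-graphic representation. The symmetric claim you invoke for the lifted-not-frame case has the same problem.

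The paper's own argument sidesteps this entirely: rather than pushing the obstruction into the canonical tree decomposition, it works directly with the $2$-sum decomposition induced by the block structure of a graph $G$ for $M$ --- Theorem~\ref{thm:blocks_of_G_are_frame_or_lift} gives that the restriction of $M$ to each block is frame or lifted-graphic, and Theorem~\ref{thm:struc_of_qgraph} immediately gives that $M$ is quasi-graphic iff those restrictions are uniformly frame or uniformly lifted-graphic. That is, the paper treats the graph-block decomposition as the relevant ``$2$-sum decomposition'' and never needs to relate it to the canonical one, so the propagation argument you sketch never arises. Re-reading the theorem statement with this in mind (``every $2$-sum decomposition'' as a genuine universal quantifier over iterated $2$-sum representations, of which the block decomposition of a graph for $M$ is one) makes the intended reverse direction much more direct than your reading via the unique Cunningham--Edmonds tree.

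Separately, your claim that ``the union of the $2$-sum decompositions of the $M_{A_i}$ is the $2$-sum decomposition of $M$'' is not automatic: when performing the loop-sums, adjacent circuit or cocircuit parts across the basepoints can merge in the Cunningham--Edmonds tree, so the identification needs care. That step is secondary to the crux but should not be asserted without argument.
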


\begin{proof}
By Theorems \ref{thm:allquasigraphicarebiasedgraphic} and \ref{thm:3conn_BG_is_QG} the classes of 3-connected quasi-graphic and biased-graphic matroids coincide. 

Let $M$ be a connected quasi-graphic matroid that is not 3-connected, and let $G$ be a graph for $M$. 
Suppose first that $G$ is 2-connected. 
Then either $M$ is frame or lifted-graphic, or by Lemma \ref{lem:M2conn_G2conn} $M$ is obtained from a 3-connected biased-graphic matroid by taking 2-sums with graphic matroids. 
Now suppose $G$ is not 2-connected. 
Then by 
Corollary \ref{frame_or_lift_reps}
the restriction of $M$ to each of its summands in a 2-sum decomposition is either lifted-graphic or frame. 
By Proposition \ref{lem:twosumframeandliftnotqg}, such a matroid is quasi-graphic if and only if either every summand is lifted-graphic or every summand is frame. 
\end{proof}

Thus the class of biased-graphic matroids differs from that of quasi-graphic matroids only in that a biased-graphic matroid is permitted to consist of 2-sums of frame matroids and lifted-graphic matroids, while for such a 2-sum to remain in the class of quasi-graphic matroids, either all summands must be frame or all summands must be lifted-graphic. 

We provide a third characterisation of those biased-graphic matroids that are not quasi-graphic. 
This characterisation suggests that the two classes of quasi-graphic and biased-graphic matroids are perhaps even closer than the previous result would indicate. 
It also specifies even more precisely just how the two classes differ. 
Let $\GB$ be a biased graph, 
let $U \subseteq V(G)$, 
and let $((G_v,\Bb_v) : v \in U)$ be a family of biased graphs with pairwise disjoint edge sets, all disjoint from $E(G)$.
There is a biased-graphic matroid whose circuits are the sets of the following types:
\begin{enumerate} 
\item circuits of $F\GB$,
\item circuits of $L(G_v, \Bb_v)$ for some $v$, 
\item unions of an unbalanced cycle $C$ of $G$, an unbalanced cycle $C'$ of some $G_v$, and a $(v$-$C)$-path in $G$, 
\item unions of a pair of unbalanced cycles $C \subseteq G_v$ and $C' \subseteq G_w$ for some distinct $v,w$, and a $(v$-$w)$-path in $G$. 
\end{enumerate}
To see that this really is a matroid, observe that it is obtained via successive 2-sums of a frame matroid corresponding to $F(G', \Bb)$ with members of a collection of lifted-graphic matroids corresponding to the $L(G_v',\Bb_v)$, where $G'$ is obtained from $G$ by adding a loop at each vertex in $U$ and each $G_v'$ is obtained from the corresponding $G_v$ by adding a single loop. 
Let us call such a matroid a \emph{broken handcuff matroid}, and circuits of types (3) and (4) above its \emph{broken handcuffs}. 

\begin{theorem} \label{broken_handcuff_matroids}
Let $M$ be a connected biased-graphic matroid. 
Then $M$ is either quasi-graphic or a broken handcuff matroid. 
\end{theorem}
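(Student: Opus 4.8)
The plan is to induct on the number of blocks of a graph $G$ for $M$, the base case being when $G$ is $2$-connected, where $M$ is quasi-graphic by Theorem \ref{thm:BG_with_2conngraph_is_QG}. (As $M$ is connected, condition~(3) in the definition of a biased graphic matroid makes $G$ connected.) The engine of the inductive step is the observation that a connected biased graphic matroid contains no \emph{frame--lift--frame} chain: there do not exist three blocks $B_1,B_2,B_3$ of $G$, with $B_2$ on the $B_1$--$B_3$ path of the block-cutpoint tree, such that $M|E(B_1)$ and $M|E(B_3)$ each contain an independent bracelet while $M|E(B_2)$ contains a dependent bracelet. Granting this, Theorem \ref{thm:blocks_of_G_are_frame_or_lift} (which makes each $M|E(\text{block})$ frame or lifted-graphic) shows that the blocks of $G$ on which $M$ is ``frame but not lifted-graphic'' span a subtree of the block-cutpoint tree, off which the remaining blocks hang as pendant subtrees on which $M$ is lifted-graphic; $M$ is then seen to be the $2$-sum of the frame matroid given by the former with the lifted-graphic matroids given by the latter, i.e.\ a broken handcuff matroid---unless every block is lifted-graphic (so $M$ is lifted-graphic) or every block is frame (so $M$ is frame). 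In the last two cases $M$ is quasi-graphic.

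Concretely I would run the induction by peeling off a leaf block $L$ of $G$ with cut vertex $v$: writing $A=E(L)$ and $B=E(M)\setminus A$, we have $M=M_A\oplus_2^e M_B$ as in the paragraph before Theorem \ref{thm:blocks_of_G_are_frame_or_lift}, where $M_A$ is biased graphic with graph $L$ together with an unbalanced loop $e$ at $v$, and $M_B$ is biased graphic (and connected, since $M$ is) with graph $G[B]$ together with a loop $e$ at $v$. By Theorem \ref{thm:Gtwoconnectedwithloop}, $M_A$ is frame or lifted-graphic, and by the inductive hypothesis $M_B$ is frame, lifted-graphic, or a broken handcuff matroid. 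One then reassembles $M$ across the basepoint $e$ using that this $2$-sum is the loop-sum of the two graphs and that loop-sums of frame (respectively lifted-graphic) matroids are frame (respectively lifted-graphic): if $M_A$ and $M_B$ are of the same type then so is $M$; if they are of opposite types then $M$ is a broken handcuff matroid; and if $M_B$ is a broken handcuff matroid then, provided either $M_A$ is lifted-graphic or $v$ lies in the frame centre of $M_B$, the summand $M_A$ is absorbed into the centre, attached as a new pendant, or absorbed into an existing pendant, and $M$ is again a broken handcuff matroid. The one potential snag is a frame $M_A$ attached at a vertex $v$ lying inside a lifted-graphic pendant of $M_B$---but this is exactly a frame--lift--frame chain and so is ruled out by the observation above.

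The remaining work, and the step I expect to be the main obstacle, is proving that no frame--lift--frame chain exists. Suppose one did, and fix an independent bracelet $\{A_1,A_2\}$ with $A_1\cup A_2\subseteq E(B_1)$ and a dependent bracelet---hence a circuit of $M$---$\{D_1,D_2\}$ with $D_1\cup D_2\subseteq E(B_2)$, choosing the cycles so that $A_1$ and $D_1$ miss the cut vertices along the $B_1$--$B_2$ path; then $A_1$ is vertex-disjoint from each of $D_1$ and $D_2$. Now propagate dependence along vertex-disjoint unbalanced cycles: every loose handcuff of $G$ is a circuit of $F\GB$ and hence dependent in $M$, while every proper subset of a bracelet is independent in $L\GB$ and hence in $M$; feeding these into the circuit elimination axiom together with the bound $\beta(X)\le 2$ for circuits $X$ of $M$ (Lemma \ref{lem:a_circuit_has_at_most_two_components}) lets one conclude successively that $\{A_1,D_1\}$ and then $\{A_1,A_2\}$ are circuits of $M$---contradicting that $\{A_1,A_2\}$, a bracelet lying inside a block on which $M$ is frame, is independent. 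The care needed here is in the choice of the connecting paths: one must arrange that deleting an edge of one of the bracelets leaves the other two cycles in distinct components of the remaining subgraph, so that the circuit produced by elimination is forced to be a bracelet rather than a ``shortcut'' loose handcuff; this is exactly where the hypothesis that the cycles live in the interiors of distinct blocks of $G$, rather than together inside one $2$-connected piece (where Theorem \ref{thm:BG_with_2conngraph_is_QG} would already apply), is used. With the frame--lift--frame chain excluded, the induction closes and $M$ is quasi-graphic or a broken handcuff matroid.
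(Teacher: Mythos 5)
The inductive skeleton here — peeling off a leaf block $L$ with cut vertex $v$, writing $M=M_A\oplus_2^e M_B$, and invoking Theorem \ref{thm:Gtwoconnectedwithloop} to see that $M_A$ is frame or lift — is reasonable and is in the same spirit as the paper, which also organises its argument around the per-block torsos being frame or lift. But the engine you build the induction on, the claim that no frame--lift--frame chain can occur, is false, and this is a genuine gap.

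Concretely, take 2-connected unbalanced graphs $H_1,H_2,H_3$, add an unbalanced loop $l_1$ to $H_1$ and to $H_2$, and an unbalanced loop $l_2$ to $H_2$ and to $H_3$ (at distinct vertices of $H_2$). Form $M = F(H_1') \oplus_2^{l_1} L(H_2') \oplus_2^{l_2} F(H_3')$. The loop-sum of the three graphs gives a connected graph $G$ for $M$ with exactly three blocks, and by Theorem \ref{thm:blocks_of_G_are_frame_or_lift} these are frame, lift, frame in that order across the block-cutpoint tree. Yet $M$ is a perfectly good biased graphic matroid and is in fact a broken handcuff matroid: since $L(H_2')$ is unchanged if both loops $l_1,l_2$ are moved to the same vertex, one may re-attach $H_3$ to $H_1$ and $H_2$ at the cut vertex shared by $H_1$ and $H_2$, whereupon $H_1\cup H_3$ becomes the (non-2-connected) frame centre and $H_2$ a pendant lift. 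So the chain exists in a graph for $M$, and the theorem is saved not by excluding such chains but by re-attaching lift blocks.

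This also explains why your circuit-elimination argument for the non-existence of the chain cannot be made to work. You want to conclude that the bracelet $A_1\cup D_1$ (with $A_1$ in a frame block, $D_1$ in the adjacent lift block, chosen to miss the cut vertex $v_1$) must be a circuit, by examining the circuit hiding inside a loose handcuff $A_1\cup D_1\cup P$. But Lemma \ref{lem:a_circuit_has_at_most_two_components} only bounds the cyclomatic number, not the shape: the circuit contained in $A_1\cup D_1\cup P$ is in fact the broken handcuff $A_1\cup P^{(1)}\cup D_1$, where $P^{(1)}$ is the piece of $P$ from $A_1$ to $v_1$. This set has two components, one of which is a unicyclic graph that is not a cycle, so it is neither a bracelet nor a loose handcuff — exactly a type-(3) broken handcuff from the paper's definition — and $A_1\cup D_1$ itself remains independent. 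No choice of connecting paths avoids this, because the 2-sum across $l_1$ always produces this circuit. Consequently the elimination never forces $A_1\cup A_2$ to become dependent, and there is no contradiction to derive.

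What is needed in place of the ``no chain'' claim is precisely the re-attachment step: when a frame leaf block is attached to a vertex lying inside a lift pendant of a broken handcuff $M_B$, move all loops of that lift block (in its torso) to the vertex at which it meets the rest, which slides the attachment point of the frame leaf block into the frame centre of $M_B$ without changing the matroid, and the frame leaf block is then absorbed into the centre by a loop-sum. This is exactly what the paper does in one uniform pass: it moves all loops of every lift torso to a single vertex, after which the union of frame blocks becomes a connected subgraph $G$ and the lift blocks hang off single vertices as the $G_v$. With that replacement your induction would close; without it, the case ``frame $M_A$ attached inside a lift pendant of $M_B$'' is not ruled out, and your proof does not cover it.
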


\begin{proof} 
Let $M$ be a connected biased-graphic matroid that is not quasi-graphic, and let $H$ be a connected graph for $M$. 
By 
Theorem \ref{P:BraceletChoice}, 
$H$ is not 2-connected. 
Consider a block $K$ of $H$. 
To each cut vertex of $H$ in $K$ there corresponds a 2-separation of $M$; 
let $M_K$ be the matroid in the 2-sum decomposition of $M$ corresponding to this star of 2-separations. 
This matroid has an extra element $e_v$ for each cut vertex $v$ of $H$ in $K$; 
the graph $K^+$ obtained from $K$ by adding $e_v$ as a loop incident to $v$ is a graph for $M_K$. 
And $M_K$ is either frame or lifted-graphic since $K^+$ is 2-connected. Call $K$ a {\em lift block} if $M_K$ is lifted-graphic, and a {\em frame block} otherwise.

If $M_K$ is lifted-graphic then it is unaltered by moving all the loops to be incident with the same vertex $v$ of $K$. In $H$ this corresponds to replacing each edge from a vertex $w$ outside $K$ to $K$ with an edge from $w$ to $v$. 
Carry out this operation for all lift blocks; the matroid $M$ is unaffected. 
After these operations, the union of all frame blocks is a connected subgraph $G$ of $H$. 
For each vertex $v$ of $G$, let $G_v$ be the union of all lift blocks of $H$ which are cut from $G$ by $v$. Then $M$ is given as above from $G$ and the $G_v$.
\end{proof}

{\color{black}
We close with a list of statements that together provide necessary and sufficient conditions for a given graph $G$ to be a framework for a given matroid $M$, each of which may be checked in time polynomial in $|E(M)|$. 
The operation inverse to vertex identification is \emph{vertex cleaving}. 

\begin{theorem} \label{Felix}
Let $M$ be a matroid and let $G$ be a graph. $G$ is a framework for $M$ if and only if $G$ satisfies \QGone, \QGtwo, and \QGthree, and $G$ is obtained by a sequence of vertex cleaving operations from a graph $G'$ satisfying \QGone, \QGtwo, \QGthree, and \BGthree. 
\end{theorem} 

\begin{proof}
Suppose $G$ is a framework for $M$. 
Then by definition $G$ satisfies \QGone, \QGtwo, \QGthree, and \QGfour. 
If $G$ does not satisfy \BGthree, let $N$ be a component of $M$ for which $E(N)$ meets two connected components $H_1, H_2$ of $G$. 
Then there is a circuit $C$ of $M$ meeting both $E(H_1)$ and $E(H_2)$. 
By \cite[Lemma 3.3]{JGT:JGT22177} and \QGfour, $C$ induces a pair of unbalanced cycles, one in each of $H_1$ and $H_2$. 
Thus by Lemma \ref{lem:rank}, 
$r(E(H_1 \cup H_2)) 
= |V(H_1)| + |V(H_2)| - 1$. 
Let $H'$ be a graph obtained from $G$ by identifying a pair of vertices $u_1 \in V(H_1)$ and $u_2 \in V(H_2)$ to a single vertex $u$, and let $H$ be the resulting component of $H'$ containing $u$. 
Then $|V(H)| = |V(H_1)| + |V(H_2)| - 1$. 
Thus $r(E(H)) = r(E(H_1 \cup H_2)) = |V(H_1)| + |V(H_2)| - 1 = |V(H)|$. 
Every other component of $H'$ is a component of $G$, and so satisfies \QGtwo. 
Thus $H'$ satisfies \QGtwo. 
It is immediate that $H'$ satisfies \QGone; to see that $H'$ satisfies \QGthree, observe that for each vertex $v \in V(H')$ that is not $u$, \QGthree\ holds in $H'$ because $v$ has precisely the same set of incident edges as in $G$. 
So suppose $\cl(E(H'-u))$ contains a non-loop element $e$ incident to $u$ in $H'$; without loss of generality, assume $e \in E(H_1)$. 
Then $e$ is incident to $u_1$ in $G$. 
But $E(H'-u)$ is contained in $E(G-u_1)$, so if $e \in \cl(H'-u)$ then $e \in \cl(G-u_1)$, 
contrary to the fact that $G$ satisfies \QGthree. 

Thus $H'$ satisfies \QGone, \QGtwo, and \QGthree. 
If $H'$ also satisfies \BGthree, put $G'=H'$; otherwise choose a component of $M$ meeting a pair of distinct components of $H'$, and identify a pair of vertices, one from each component. 
Repeating as long as there is a component of $M$ meeting distinct components of the resulting graph, in this way we eventually obtain a graph $G'$ satisfying \QGone, \QGtwo, \QGthree, and \BGthree, 
from which $G$ is obtained by a sequence of vertex cleaving operations. 

Conversely, let $G$ be a graph satisfying \QGone, \QGtwo, and \QGthree, obtained by a sequence of vertex cleaving operations from a graph $G'$ satisfying \QGone, \QGtwo, \QGthree, and \BGthree. 
We just need show that $G$ satisfies \QGfour. 
So suppose not, and let $C$ be a circuit that induces in $G$ a subgraph with more than two components. 
By Lemma 3.3 of \cite{JGT:JGT22177}, $G[C]$ is a collection of vertex-disjoint unbalanced cycles, so $\beta_G(C) \geq 3$. 
The graphs $G$ and $G'$ have exactly the same set of cycles; 
put $\Bb = \{C : C$ is a cycle of $G$ and a circuit of $M\}$. 
Lemma 3.3 of \cite{JGT:JGT22177} now implies that 
$\Ii(L(G',\Bb)) \subseteq \Ii(M) \subseteq \Ii(F(G',\Bb))$.
Since $G'$ satisfies \BGthree, 
$M$ is biased-graphic with graph $G'$. 
Thus by Lemma \ref{lem:a_circuit_has_at_most_two_components}, 
$\beta_{G'}(C) \leq 2$. 
But $\beta_{G'}(C) = \beta_G(C)$, so this is a contradiction. 
\end{proof}

Given a matroid $M$ and a graph $G$, one can check \QGone, \QGtwo, and \QGthree\ for $G$ in time polynomial in $|E(M)|$. 
We can also find the components of $M$ and the components of $G$ in polynomial time, and so check whether $G$ satisfies \BGthree\ in polynomial time. 
If $G$ satisfies \QGone, \QGtwo, and \QGthree, but not \BGthree, then the 
process of vertex identification described in the first part of the proof of Theorem \ref{Felix} can be carried out in time polynomial in $|E(M)|$. 
If $M$ is quasi-graphic, in this way we obtain a framework $G'$ for $M$ satisfying \BGthree. 
Otherwise, 
since $G$ satisfies \QGone, \QGtwo, and \QGthree, $G$ violates \QGfour. 
In this case, the graph $G'$ we so obtain will satisfy \BGthree\ but violate \QGtwo. 
}

\subsection*{Acknowledgement} 
Our thanks to Jim Geelen for pointing out the missing connectivity assumption in our original paper, and for subsequent conversations. 

\providecommand{\bysame}{\leavevmode\hbox to3em{\hrulefill}\thinspace}
\providecommand{\MR}{\relax\ifhmode\unskip\space\fi MR }
\providecommand{\MRhref}[2]{%
  \href{http://www.ams.org/mathscinet-getitem?mr=#1}{#2}
}
\providecommand{\href}[2]{#2}

\end{document}